\documentclass[11pt]{amsart}

\usepackage{hyperref}
\usepackage{amssymb,amsfonts}
\usepackage{hyperref}
\usepackage{amssymb,textcomp}

\usepackage{enumerate}

\usepackage[utf8]{inputenc}
\usepackage[T1]{fontenc}

\usepackage[mathscr]{eucal}

\usepackage[a4paper, twoside=false, vmargin={2cm,3cm}, includehead]{geometry}

\newtheorem{lemma}{Lemma}[section]
\newtheorem{theorem}[lemma]{Theorem}
\newtheorem*{theorem*}{Theorem}
\newtheorem{corollary}[lemma]{Corollary}

\newtheorem{proposition}[lemma]{Proposition}
\newtheorem*{proposition*}{Proposition}

\newtheorem*{problem*}{Problem}

\theoremstyle{definition}
\newtheorem*{claim*}{Claim}

\newtheorem*{definition}{Definition}

\newtheorem*{remark}{Remark}
\newtheorem*{remarks}{Remarks}


\newcommand{\fdeg}{\text{f-deg}}


\newcommand{\C}{{\mathbb C}}
\newcommand{\E}{{\mathbb E}}

\newcommand{\N}{{\mathbb N}}
\renewcommand{\P}{{\mathbb P}}

\newcommand{\R}{{\mathbb R}}

\newcommand{\Z}{{\mathbb Z}}


\newcommand{\CX}{{\mathcal X}}


 \newcommand{\uh}{{\underline{h}}}



\newcommand{\norm}[1]{\left\Vert #1\right\Vert}
\newcommand{\nnorm}[1]{\lvert\!|\!| #1|\!|\!\rvert}

\begin{document}

		\title{Joint ergodicity of fractional powers of primes}

	\thanks{The author was supported  by the Research Grant - ELIDEK HFRI-FM17-1684.}

	\author{Nikos Frantzikinakis}
	\address[Nikos Frantzikinakis]{University of Crete, Department of mathematics and applied mathematics, Voutes University Campus, Heraklion 71003, Greece} \email{frantzikinakis@gmail.com}
	\begin{abstract}
		We establish mean convergence for multiple ergodic averages with iterates given by distinct
		fractional powers of primes and related multiple recurrence results.  A consequence of our main result is that every set of integers with positive  upper density
		contains patterns of the form $\{m,m+[p_n^a], m+[p_n^b]\}$, where $a,b$ are positive non-integers and $p_n$ denotes the $n$-th prime, a property that fails if $a$ or $b$ is a natural number. Our approach is based on a recent criterion for joint ergodicity  of collections of sequences and the bulk of the proof is devoted to obtaining good seminorm estimates  for the related multiple ergodic averages.  The input needed from number theory are upper bounds for the number of prime $k$-tuples that follow from
		elementary sieve theory estimates and equidistribution results of fractional powers of primes in the circle.
	\end{abstract}

\subjclass[2020]{Primary: 37A44; Secondary:    28D05, 05D10, 11B30.}

\keywords{Joint ergodicity,  multiple recurrence, prime numbers,   fractional powers.}
\maketitle

\section{Introduction and main results}\label{S:MainResults}
\subsection{Introduction} Given an  ergodic measure preserving system $(X, \mu,T)$ and functions $f,g\in L^\infty(\mu)$,
it was shown in \cite{F10} that for distinct $a,b\in \R_+\setminus\Z$  we have
\begin{equation}\label{E:fracinteger}
\lim_{N\to\infty} \frac{1}{N}\sum_{n=1}^N \,  T^{[n^a]}f\cdot T^{[n^b]}g=\int f\, d\mu \cdot \int g\, d\mu
\end{equation}
in $L^2(\mu)$.\footnote{Throughout, with $(X,\mu,T)$ we mean a probability space $(X,\mathcal{X},\mu)$ together with an invertible, measurable,  and measure-preserving $T\colon X\to X$. The system is ergodic if the only $T$-invariant sets in $\CX$ have measure $0$ or $1$. If $f\in L^\infty(\mu)$, with $T^nf$ we denote the composition $f\circ T^n$, where $T^n:=T\circ\cdots\circ T$.}
An immediate consequence of this limit formula is that for every
(not necessarily ergodic) measure preserving system   and measurable set $A$,  we have
\begin{equation}\label{E:lower}
\lim_{N\to\infty} \frac{1}{N}\sum_{n=1}^N \, \mu(A\cap  T^{-[n^a]}A\cap T^{-[n^b]}A)\geq \mu(A)^3.
\end{equation}
Examples of periodic systems show that  \eqref{E:fracinteger} and \eqref{E:lower}
fail if either $a$ or $b$ is an integer greater than $1$.
Using  the Furstenberg correspondence principle \cite{Fu77, Fu81a}, it is easy to deduce from \eqref{E:lower}
that every set of integers
 with positive upper density contains patterns of the form
 $$
 \{m,m+[n^a],m+[n^b]\}
 $$ for some $m,n\in\N$.

The main goal of this article is to establish similar  convergence and  multiple recurrence results, and deduce related combinatorial consequences,  when in the previous statements we replace the variable  $n$ with the $n$-th prime number $p_n$. For instance, we  show in Theorem~\ref{T:Main} that if $a,b\in \R_+$ are distinct non-integers, then
\begin{equation}\label{E:fracprime}
\lim_{N\to\infty} \frac{1}{N}\sum_{n=1}^N \, T^{[p_n^a]}f\cdot T^{[p_n^b]}g=\int f\, d\mu \cdot \int g\, d\mu
\end{equation}
in $L^2(\mu)$. We also prove more general statements of this sort involving two or more linearly independent polynomials with fractional exponents evaluated at  primes (related results  for fractional powers of   integers were previously established in \cite{BMR21,F10, R21}).

If $a,b\in \N$ are natural numbers, then   \eqref{E:fracprime} fails
 because of  obvious congruence
obstructions.  On the other hand, using the method in \cite{FHK11} it can be shown that if $a,b\in\N$ are distinct, then  \eqref{E:fracprime} does hold under the additional assumption that the system is totally ergodic, see also \cite{KK19,K15} for related work regarding polynomials in $\R[t]$ evaluated at  primes.
The main  idea in the proof of these results  is to show that the difference of
 a modification of the averages in \eqref{E:fracprime} and the averages \eqref{E:fracinteger}  converges   to $0$ in $L^2(\mu)$.
 This comparison method works well when $a,b$ are positive integers, since in this case    one can bound this difference by the
 Gowers uniformity norm of the modified von Mangoldt function $\tilde{\Lambda}_N$ (see~\cite[Lemma~3.5]{FHK11} for the precise statement),
 which is known by  \cite{GT09} to converge to $0$ as $N\to\infty$.
  Unfortunately, if $a,b$ are not integers  this comparison step breaks down, since
  it requires a uniformity property for $\tilde{\Lambda}_N$  in which some
  of the averaging parameters lie in  very short intervals, a property   that is currently not known.
An alternative approach for establishing \eqref{E:fracprime} is given by the argument used in \cite{F10} to prove \eqref{E:fracinteger}. It
 uses the theory of characteristic factors that originates from \cite{HK05}
and eventually reduces the problem to an equidistribution result on nilmanifolds. This method is also blocked, since  we are unable to establish the needed equidistribution properties on general nilmanifolds.\footnote{For polynomials with  integer degrees the needed equidistribution property can be verified  using a comparison method that again breaks down when the degrees are fractional.}

Our approach is quite different and is based on a recent result of the author from \cite{F21}  (see Theorem~\ref{T:JointErgodicity} below);
 it implies that in order to verify \eqref{E:fracprime}, it suffices to  obtain suitable seminorm estimates and  equidistribution results on the circle (versus the general nilmanifold that the method of characteristic factors requires).
   The needed equidistribution property follows from  \cite{BKMST14} (see Theorem~\ref{T:BKMST} below) and
 the bulk of this article is devoted to the rather tricky  proof of the seminorm estimates  (see Theorem~\ref{T:CFprimes} below).


 \subsection{Main results}

 To facilitate  discussion we use the following  definition
 from \cite{F21}.
 	\begin{definition}
 		 We say that  the collection of sequences  $b_1,\ldots, b_\ell\colon \N\to \Z$ is  {\em jointly ergodic}, if  for every ergodic system $(X,\mu,T)$ and  functions   $f_1,\ldots, f_\ell \in L^\infty(\mu)$ we have
 		$$
 	\lim_{N\to\infty} \frac{1}{N}\sum_{n=1}^N \, T^{b_1(n)}f_1 \cdot\ldots \cdot  T^{b_\ell(n)}f_\ell= \int f_1\, d\mu\cdot \ldots \cdot \int f_\ell\, d\mu
 		$$
 		in $L^2(\mu)$.
 	\end{definition}
 For instance, the  identities   \eqref{E:fracinteger} and \eqref{E:fracprime} are equivalent to the joint ergodicity of  the pairs of sequences
 $\{[n^a], [n^b]\}$ and  $\{[p_n^a], [p_n^b]\}$  when $a,b\in \R_+$ are distinct non-integers.

 We are going to  establish joint ergodicity properties involving the  class
 of fractional polynomials that we define next.
 \begin{definition}
 	A  {\em polynomial with real exponents}   is a function $a\colon \R_+\to \R$ of the form $a(t)=\sum_{j=1}^r \alpha_jt^{d_j}$, where $\alpha_j\in\R$ and $d_j\in \R_+$, $j=1,\ldots, r$.  If  $d_1,\ldots, d_r\in \R_+\setminus \Z$,  we call it a  \emph{fractional polynomial}. 
 \end{definition}

The following is the main result of this article:

\begin{theorem}\label{T:Main}
	Let $a_1,\ldots, a_\ell$  be linearly independent\footnote{Henceforth, when we say  ``linearly independent'' we mean linearly independent over $\R$.} fractional polynomials.
	Then
	 the collection of sequences $[a_1(p_n)],\ldots, [a_\ell(p_n)]$ is jointly ergodic.
\end{theorem}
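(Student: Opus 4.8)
The plan is to apply the joint ergodicity criterion of \cite{F21} (Theorem~\ref{T:JointErgodicity} below), which reduces the assertion to two ingredients: first, an equidistribution statement asserting that for every nonzero integer combination the sequence $t\mapsto \sum_j k_j a_j(p_n)$ is equidistributed modulo $1$ (this is the ``circle'' part of the criterion), and second, a collection of seminorm estimates controlling the averages $\frac{1}{N}\sum_{n\le N} T^{[a_1(p_n)]}f_1\cdots T^{[a_\ell(p_n)]}f_\ell$ by Host--Kra seminorms of the individual functions. The first ingredient follows directly from the equidistribution result for fractional powers of primes in \cite{BKMST14} (Theorem~\ref{T:BKMST} below): since $a_1,\dots,a_\ell$ are linearly independent fractional polynomials, every nontrivial integer combination $\sum_j k_j a_j$ is again a fractional polynomial (its leading exponent is a positive non-integer), and such polynomials evaluated at primes are known to equidistribute. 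So the whole weight of the argument falls on the second ingredient.

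For the seminorm estimates I would invoke Theorem~\ref{T:CFprimes} (stated later in the paper), whose proof is the technical heart of the article. The strategy there is the standard van der Corput / PET-type scheme adapted to the prime setting. First, replace the ergodic average over primes $p_n\le$ (the relevant range) by a von Mangoldt-weighted average $\frac{1}{N}\sum_{n\le N}\Lambda(n) T^{[a_1(n)]}f_1\cdots$, up to controllable error, using partial summation and the prime number theorem. Then decompose $\Lambda$, or rather work directly, bounding the weighted average by an $L^2$ or $L^1$ average over shifted copies, and iterate van der Corput differencing to successively eliminate the functions $f_2,\dots,f_\ell$ and reduce to an expression involving only $f_1$ and differences of the exponent polynomials. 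At each differencing step the crucial point is that the number of pairs $(p,p+h)$ (or prime $k$-tuples) that survive is controlled by elementary sieve upper bounds — this is exactly the ``input from number theory'' advertised in the abstract — and that the fractional nature of the exponents makes the relevant phase $a_j(p+h)-a_j(p)$ genuinely equidistributed, so the off-diagonal terms contribute $o(1)$.

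The main obstacle, as the introduction itself flags, is precisely this seminorm step in the fractional regime: one cannot simply compare with the integer-power averages \eqref{E:fracinteger} via Gowers-uniformity of a modified von Mangoldt function, because the needed uniformity would require averaging parameters in very short intervals — a property not currently known — nor can one run the characteristic-factors argument of \cite{F10}, since the required nilmanifold equidistribution is not available. The resolution is that the joint ergodicity criterion only demands seminorm control plus \emph{circle} equidistribution, and the latter \emph{is} available from \cite{BKMST14}; so the difficulty is concentrated in making the van der Corput iteration work with only sieve-theoretic $k$-tuple bounds (which give the right order of magnitude but not asymptotics) together with the fractional equidistribution on $\T$. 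Carrying out the differencing so that each step produces a fractional polynomial in the new variables of strictly lower ``complexity,'' keeping track of how the linear independence is preserved (or how to restore it after specialising shift parameters), and handling the short-interval ranges of summation that arise from the non-integer exponents, are the delicate points; once Theorem~\ref{T:CFprimes} is in hand, Theorem~\ref{T:Main} follows by a direct application of Theorem~\ref{T:JointErgodicity}.
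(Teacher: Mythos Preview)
Your overall strategy is exactly the paper's: apply Theorem~\ref{T:JointErgodicity}, verify the equidistribution condition via Theorem~\ref{T:BKMST}, and invoke Theorem~\ref{T:CFprimes} for the seminorm estimates (whose proof is indeed the technical heart and follows the van der Corput/PET scheme with sieve-theoretic prime $k$-tuple bounds that you outline).

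Two corrections to your equidistribution step, though. First, the ``good for equidistribution'' condition in Theorem~\ref{T:JointErgodicity} requires
\[
\lim_{N\to\infty}\E_{n\in[N]}\, e\big(t_1[a_1(p_n)]+\cdots+t_\ell[a_\ell(p_n)]\big)=0
\]
for all \emph{real} $t_1,\ldots,t_\ell\in[0,1)$ not all zero, not merely for integer coefficients $k_j$ as you wrote. This is precisely why linear independence over $\R$ (rather than over $\Z$ or $\Q$) is the right hypothesis: it guarantees that every nontrivial real combination $\sum_j t_j a_j$ is a nonzero fractional polynomial, to which Theorem~\ref{T:BKMST} applies. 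Second, Theorem~\ref{T:BKMST} gives equidistribution of $a(p_n)\pmod 1$, but the criterion involves the integer parts $[a_j(p_n)]$; the passage from one to the other is not entirely trivial (the fractional parts $\{a_j(p_n)\}$ contribute a bounded but non-negligible term) and the paper handles it by invoking \cite[Lemma~6.2]{F21} (see Corollary~\ref{C:equi}).
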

In particular, this applies to the collection of sequences $[n^{c_1}],\ldots, [n^{c_\ell}]$ where $c_1,\ldots, c_\ell\in \R_+\setminus\Z$ are distinct.
We remark also that the linear independence assumption is necessary for joint ergodicity. Indeed, suppose that $a_1,\ldots,a_\ell$ is a collection of linearly depended sequences. Then   $c_1a_1+\cdots+c_\ell a_\ell=0$ for some $c_1,\ldots, c_\ell\in \R$ not all of them $0$. After multiplying by an appropriate constant  we can assume that at least one of the $c_1,\ldots, c_\ell$ is not an integer and   $\max_{i=1,\ldots, \ell}|c_i|\leq 1/(10\ell)$. Then 
 	 $c_1[a_1(n)]+\cdots+c_\ell[a_\ell(n)]\in [-1/10,1/10]$ for all $n\in \N$, and  this  easily implies that  the collection $[a_1(n)],\ldots, [a_\ell(n)]$ is  not good for equidistribution (see definition in Section~\ref{S:strategy}) and hence not jointly ergodic.

Using standard methods we immediately deduce from Theorem~\ref{T:Main} the following
 multiple recurrence result:
\begin{corollary}
	Let $a_1,\ldots, a_\ell$  be linearly independent fractional polynomials. Then for every system $(X,\mu,T)$ and measurable set $A$ we have
	$$
\lim_{N\to\infty} \frac{1}{N}\sum_{n=1}^N \, \mu(A\cap  T^{-[a_1(p_n)]}A\cap\cdots\cap  T^{-[a_{\ell}(p_n)]}A)\geq (\mu(A))^{\ell+1}.
	$$
\end{corollary}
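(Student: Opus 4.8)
The plan is to derive this multiple recurrence inequality from Theorem~\ref{T:Main} by the standard combination of the ergodic decomposition and Jensen's inequality. The ergodic decomposition bridges the gap between Theorem~\ref{T:Main}, which concerns only ergodic systems, and the arbitrary system $(X,\mu,T)$ appearing here; Jensen's inequality is what turns the resulting identity into the stated inequality.

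Concretely, write $b_i(n):=[a_i(p_n)]$ and $A_n:=A\cap T^{-b_1(n)}A\cap\cdots\cap T^{-b_\ell(n)}A$. First I would disintegrate $\mu=\int_X\mu_x\,d\mu(x)$ over the $\sigma$-algebra $\mathcal I$ of $T$-invariant sets, so that for $\mu$-almost every $x$ the system $(X,\mu_x,T)$ is an ergodic (invertible) measure preserving system and, for every fixed measurable set $B$, the map $x\mapsto\mu_x(B)$ is measurable with $\mu(B)=\int_X\mu_x(B)\,d\mu(x)$; in particular
$$
\frac{1}{N}\sum_{n=1}^N\mu(A_n)=\int_X\frac{1}{N}\sum_{n=1}^N\mu_x(A_n)\,d\mu(x),
$$
and the inner averages all take values in $[0,1]$. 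Since $a_1,\dots,a_\ell$ are linearly independent fractional polynomials, Theorem~\ref{T:Main} says that $b_1,\dots,b_\ell$ is a jointly ergodic collection; applying this to the ergodic system $(X,\mu_x,T)$ with $f_1=\cdots=f_\ell=\one_A$ and pairing the resulting $L^2(\mu_x)$-limit with $\one_A$ gives, for $\mu$-almost every $x$,
$$
\lim_{N\to\infty}\frac{1}{N}\sum_{n=1}^N\mu_x(A_n)=\mu_x(A)^{\ell+1}.
$$
By the bounded convergence theorem the original averages then converge, with limit $\int_X\mu_x(A)^{\ell+1}\,d\mu(x)$, and Jensen's inequality for the convex function $t\mapsto t^{\ell+1}$ on $[0,1]$ (equivalently, H\"older's inequality) bounds this below by $\big(\int_X\mu_x(A)\,d\mu(x)\big)^{\ell+1}=\mu(A)^{\ell+1}$, as desired.

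I do not expect a genuine obstacle: the argument is entirely routine, and the only point demanding a modicum of care is the appeal to the ergodic decomposition --- its existence, and the measurability of $x\mapsto\mu_x(B)$ --- which, as is customary in this context, presupposes a reduction to a standard probability space. Everything else reduces to the elementary fact that $L^2(\mu_x)$-convergence $g_N\to g$ forces $\langle\one_A,g_N\rangle\to\langle\one_A,g\rangle$. One could equivalently phrase the whole argument in terms of the conditional expectation onto $L^2(X,\mathcal I,\mu)$ without explicitly invoking ergodic components, but the decomposition route is the most transparent, and is presumably what ``standard methods'' refers to in the statement.
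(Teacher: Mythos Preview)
Your argument is correct and is precisely the ``standard methods'' the paper alludes to; the paper does not actually spell out a proof of this corollary, so there is nothing further to compare.
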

Using the Furstenberg correspondence principle  \cite{Fu77, Fu81a}, we deduce the following  combinatorial consequence:
\begin{corollary}
	Let  $a_1,\ldots, a_\ell$  be linearly independent fractional polynomials. Then for every subset $\Lambda$  of $\N$ we have\footnote{For $A\subset \N$ we let $\overline{d}(A):=\limsup_{N\to\infty}\frac{|A\cap [N]|}{N}$.}
	$$
	\liminf_{N\to\infty} \frac{1}{N}\sum_{n=1}^N \, \overline{d}(\Lambda\cap (\Lambda -[a_1(p_n)])\cap \cdots \cap (\Lambda -[a_\ell(p_n)]))\geq (\overline{d}(\Lambda))^{\ell+1}.
	$$
	Hence, every set of integers with positive upper density contains patterns of the form $\{m,m+[a_1(p_n)], \ldots, m+[a_\ell(p_n)] \}$ for some $m,n\in\N$.
\end{corollary}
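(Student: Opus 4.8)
The plan is to deduce the statement from the preceding multiple recurrence Corollary by a direct application of the Furstenberg correspondence principle \cite{Fu77, Fu81a}; once Theorem~\ref{T:Main} and its recurrence consequence are in hand, this transfer step is entirely routine. We may assume $\overline{d}(\Lambda)>0$, since otherwise both sides of the asserted inequality vanish.

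First I would invoke the correspondence principle in its usual form: there exist a measure preserving system $(X,\mu,T)$ and a measurable set $A$ with $\mu(A)=\overline{d}(\Lambda)$ such that
$$
\overline{d}\big(\Lambda\cap(\Lambda-h_1)\cap\cdots\cap(\Lambda-h_\ell)\big)\ \geq\ \mu\big(A\cap T^{-h_1}A\cap\cdots\cap T^{-h_\ell}A\big)
$$
for every $h_1,\ldots,h_\ell\in\Z$ (one takes $X=\{0,1\}^\Z$ with the shift, $A=\{\omega\colon\omega(0)=1\}$, and $\mu$ a weak-$*$ limit along a subsequence realizing $\overline{d}(\Lambda)$; since cylinder sets are clopen, the displayed inequality holds simultaneously for all tuples, with no diagonal refinement needed). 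Averaging over $n=1,\ldots,N$ with $h_i=[a_i(p_n)]$ preserves the inequality, so
$$
\frac1N\sum_{n=1}^N \overline{d}\big(\Lambda\cap(\Lambda-[a_1(p_n)])\cap\cdots\cap(\Lambda-[a_\ell(p_n)])\big)\ \geq\ \frac1N\sum_{n=1}^N \mu\big(A\cap T^{-[a_1(p_n)]}A\cap\cdots\cap T^{-[a_\ell(p_n)]}A\big).
$$
Letting $N\to\infty$ and applying the preceding Corollary to $(X,\mu,T)$ and $A$ (its right-hand side being a genuine limit, bounded below by $(\mu(A))^{\ell+1}$), the $\liminf$ of the left-hand side is at least $(\mu(A))^{\ell+1}=(\overline{d}(\Lambda))^{\ell+1}$, which is the claimed inequality.

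For the combinatorial consequence, if $\overline{d}(\Lambda)>0$ then the $\liminf$ just obtained is strictly positive, so there is some $n\in\N$ (in fact a set of $n$ of positive lower density) with $\overline{d}(\Lambda\cap(\Lambda-[a_1(p_n)])\cap\cdots\cap(\Lambda-[a_\ell(p_n)]))>0$; in particular this intersection is nonempty, and any $m$ in it yields a pattern $\{m,m+[a_1(p_n)],\ldots,m+[a_\ell(p_n)]\}$ contained in $\Lambda$. Moreover, since $a_1,\ldots,a_\ell$ are linearly independent fractional polynomials, for all large $n$ the integers $[a_1(p_n)],\ldots,[a_\ell(p_n)]$ are positive and pairwise distinct, so one may additionally require the pattern to consist of $\ell+1$ distinct integers.

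I do not expect any genuine obstacle in this step: all the substance of the paper lies in Theorem~\ref{T:Main} and the multiple recurrence Corollary derived from it. The only points deserving a word of care are that the correspondence principle must be set up so that it produces the inequality for all the shift-tuples $([a_1(p_n)],\ldots,[a_\ell(p_n)])$ at once (automatic here from clopenness of cylinder sets), and that the direction of this inequality is the one compatible with passing to $\liminf$ in $N$ and with invoking the recurrence Corollary on the system side.
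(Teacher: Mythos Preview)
Your proposal is correct and follows exactly the route the paper indicates: the paper gives no proof beyond the sentence ``Using the Furstenberg correspondence principle \cite{Fu77, Fu81a}, we deduce the following combinatorial consequence,'' and your write-up is simply the standard fleshing out of that remark, combining the correspondence principle with the preceding multiple recurrence corollary.
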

 An essential tool  in the proof of our main result is the following statement  that is of independent interest since it covers a larger class
  of collections of    fractional polynomials (not necessarily linearly independent) evaluated at  primes. See Section~\ref{S:strategy} for the definition of the seminorms $\nnorm{\cdot}_s$.
  \begin{theorem}\label{T:CFprimes}
  	Suppose that the fractional polynomials $a_1,\ldots, a_\ell$  and their pairwise differences are non-zero. Then there exists $s\in \N$ such that  for every ergodic system  $(X,\mu,T)$ and functions $f_1,\ldots, f_\ell\in L^\infty(\mu)$ with  $\nnorm{f_i}_{s}=0$ for some $i\in\{1,\ldots, \ell\}$, we have
  	\begin{equation}\label{E:apn}
  	\lim_{N\to\infty}		\frac{1}{N}\sum_{n=1}^N\,  T^{[a_1(p_n)]}f_1\cdot \ldots \cdot T^{[a_\ell(p_n)]}f_\ell=0
  	\end{equation}
  	in $L^2(\mu)$. 
  \end{theorem}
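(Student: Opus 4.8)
The plan is to prove the seminorm estimate \eqref{E:apn} by a standard \emph{PET-type} (polynomial exhaustion) induction, combined with the classical trick for handling von Mangoldt weights via the $W$-trick and absorption of the prime counting into a ``uniform" averaging over residue classes, so that at the end one only needs upper bounds for the number of prime $k$-tuples (the input from sieve theory advertised in the abstract). First I would pass from the average over $n\le N$ with the indicator of primes to a weighted average $\frac1N\sum_{n\le N}\Lambda(n)\,T^{[a_1(n)]}f_1\cdots T^{[a_\ell(n)]}f_\ell$, using partial summation and the fact that $p_n\sim n\log n$ to convert $[a_i(p_n)]$ into $[a_i(n)]$ with an error controlled by the discrepancy estimates; this is where one would cite \cite{BKMST14} (Theorem~\ref{T:BKMST}) to ensure the fractional-power iterates behave like equidistributed sequences on the circle and the replacement is legitimate in $L^2(\mu)$. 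From here the weight $\Lambda$ is bounded pointwise (after the $W$-trick, restricting $n$ to an arithmetic progression $n\equiv b\ (\mathrm{mod}\ W)$ with $(b,W)=1$) by a pseudorandom majorant, but in fact for a pure $L^2$ seminorm vanishing statement we only need that the average of $\Lambda$ over short and long ranges is $O(1)$ and that we can trade $\Lambda(n)$-weighted Gowers-type expressions for unweighted ones at the cost of the sieve upper bound on prime tuples.

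The core of the argument is the van der Corput / PET induction on the ``complexity'' of the tuple $(a_1,\dots,a_\ell)$. I would set up a weight-$w(n)\ge 0$ version of the averages, apply the van der Corput inequality in $n$ with shift $h$ ranging in $[H]$, and obtain a new average in $(n,h)$ governed by the differenced polynomials $a_i(n+h)-a_j(n+h)$ and $a_i(n+h)-a_i(n)$; crucially, since each $a_i$ is a \emph{fractional} polynomial, every such difference is a non-zero fractional polynomial of strictly smaller ``leading behaviour'' in a suitable ordering, so the induction terminates. The weight, after van der Corput, becomes $w(n+h)\overline{w(n)}$, i.e. a product of two shifted von Mangoldt functions; iterating $s$ times produces a product of $2^s$ shifted copies of $\Lambda$, and here is exactly where the \emph{prime $k$-tuples upper bound} (elementary Selberg sieve, uniform in the shifts) lets us bound the $w$-weighted count by $C_s$ times the unweighted count, so that the induction stays within a controlled class of unweighted polynomial averages. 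At the base of the induction the tuple collapses to a single linear-in-the-new-variable exponential, which one estimates by the definition of $\nnorm{\cdot}_{s}$ together with a Hölder/Cauchy--Schwarz step and the equidistribution of fractional powers on $\T$ from Theorem~\ref{T:BKMST}; the hypothesis that \emph{some} $f_i$ has $\nnorm{f_i}_s=0$ is propagated through the van der Corput steps in the usual way, by keeping track of which function retains its seminorm after each differencing.

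\textbf{Main obstacle.} I expect the principal difficulty to be \emph{not} the ergodic-theoretic PET scheme, which is by now routine, but rather making the number-theoretic input quantitatively compatible with it: after $s$ rounds of van der Corput the shifts $h_1,\dots,h_s$ can be as large as $H^{s}$ with $H\to\infty$ slowly, and one needs the sieve upper bound for $\Lambda(n+\epsilon_1 h_1+\cdots)\cdots$ to be uniform over this whole range of shift parameters while still summing to something of the right order; simultaneously, because the exponents are fractional, the ``reduction of complexity" in the PET step is not the clean degree-drop of the integer case—one must track a real-valued notion of degree and verify that differences of fractional polynomials never accidentally become integer polynomials or constants, and that the short-interval averaging (the $[p_n^a]$ lands us in intervals of fractional length, which is precisely what blocks the Gowers-norm approach mentioned in the introduction) can be absorbed. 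Handling these two issues in tandem—the uniform sieve estimate over many shifts and the fractional PET bookkeeping—is where the real work of the proof lies, and is why the statement requires only \emph{some} $i$ with $\nnorm{f_i}_s=0$ and an unspecified but finite $s$, rather than a sharp $s$.
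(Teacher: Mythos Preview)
Your broad architecture---pass to $\Lambda'$-weighted averages, run a PET/van der Corput induction that turns the weight into $(\Delta_{\underline h}\Lambda')(n)$, and control these products by sieve upper bounds for prime tuples---matches the paper. But several pieces are off, and one is a genuine gap.

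First, two misplacements. The $W$-trick and any pseudorandom-majorant machinery play no role here; in fact the introduction explains precisely that the Green--Tao comparison route fails for fractional exponents, so invoking it is a red herring. Likewise, the equidistribution input (Theorem~\ref{T:BKMST}) is not used anywhere in the proof of Theorem~\ref{T:CFprimes}; it enters only later, via Theorem~\ref{T:JointErgodicity}, to deduce Theorem~\ref{T:Main}. In particular there is no step where one ``converts $[a_i(p_n)]$ into $[a_i(n)]$ using discrepancy''---the passage from primes to $\Lambda'$-weights is the elementary partial-summation lemma (e.g.\ \cite[Lemma~2.1]{FHK11}) and nothing more.

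The real gap is the base case. You describe the PET step as routine and the endpoint as ``a single linear-in-the-new-variable exponential,'' but the fractional PET here does \emph{not} terminate in a single linear iterate with bounded weight. It terminates when all iterates are \emph{sublinear} fractional polynomials (degree in $(0,1)$), still carrying the unbounded weight $(\Delta_{\underline h}\Lambda')(n)\le(\log N)^{2^k}$. The paper calls this ``the most laborious part'' and devotes an entire section to it. The mechanism you are missing is the change-of-variables argument of Lemma~\ref{L:11}: one partitions $[N]$ into blocks $J_{n,\underline h}$ on which each sublinear iterate varies by $O(1)$, averages the weight over each block, and only \emph{then} invokes the sieve bound (Corollary~\ref{C:primes}). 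Crucially, the resulting constant $C(\underline h)$ is not uniformly bounded---it involves the singular series $\mathfrak{G}_{2^k}$---so one needs the additional fact (Proposition~\ref{P:SS}) that $\E_{\underline h}C(\underline h)^2$ is bounded, followed by Cauchy--Schwarz in $\underline h$. This is exactly how the unbounded $\Lambda'$ gets traded for a bounded weight, and it only works because the iterates are sublinear (see the footnotes in the proof of Lemma~\ref{L:11}). After that, two further reductions (Lemmas~\ref{L:12} and \ref{L:13}) bring matters down to Leibman-type polynomial averages. Your sketch collapses all of this into ``sieve bound $\Rightarrow$ replace weighted by unweighted,'' which does not go through as stated.
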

\begin{remark}
It seems likely that 	with some additional effort
 the techniques of this article
	can  cover the more general case of Hardy field functions $a_1,\ldots, a_\ell$ such that the functions and their differences
	belong to the set $\{a\colon \R_+\to \R\colon t^{k+\varepsilon}\prec a(t)\prec t^{k+1-\varepsilon} \text{ for some } k\in \Z_+ \text{ and } \varepsilon>0\}$. Using the equidistribution result in \cite{BKS19} and the argument in Section~\ref{S:strategy}, this   would immediately  give  a corresponding strengthening of Theorem~\ref{T:Main}. We opted not to deal with these more general statements
	because the added technical complexity would obscure the main ideas of the proof of Theorem~\ref{T:CFprimes}.
	\end{remark}
 The proof of Theorem~\ref{T:CFprimes} crucially uses the fact that the iterates $a_1,\ldots, a_\ell$
 have  ``fractional power growth'' and our argument fails for iterates with ``integer power growth''.
 Similar results that cover the case of   polynomials with integer or real coefficients were obtained in \cite{FHK11, WZ11} and \cite{KK19}  respectively, and depend on deep properties of  the von Mangoldt function from \cite{GT08} and \cite{GT09},   but these results and their proofs do not appear to be useful for our purposes. Instead, we rely on some softer  number theory input that follows from standard sieve theory techniques (see Section~\ref{SS:sieve}), and an argument that is fine-tuned for the case of fractional polynomials (but fails for polynomials with integer exponents). This argument eventually enables us to bound the averages in \eqref{E:apn} with averages involving iterates given by multivariate polynomials with real coefficients evaluated at the integers, a case that was essentially handled in \cite{L05}.

\subsection{Limitations of our techniques and open problems}
We expect that  the following generalisation of Theorem~\ref{T:Main} holds:
\begin{problem*}
Let  $a_1,\ldots,a_\ell$ be functions from a Hardy field with polynomial growth such that every non-trivial linear combination $b$ of them satisfies
$
|b(t)-p(t)|/\log{t}\to \infty$ for all $p\in \Z[t]$. Then
the collection of sequences $[a_1(p_n)],\ldots, [a_\ell(p_n)]$ is jointly ergodic.
\end{problem*}
By Theorem~\ref{T:JointErgodicity}, it suffices to show that the collection $[a_1(p_n)],\ldots, [a_\ell(p_n)]$ is good for equidistribution and seminorm estimates.
Although the needed  equidistribution property  has been proved in \cite[Theorem~3.1]{BKS19}, the  seminorm estimates that extend Theorem~\ref{T:CFprimes}  seem  hard to establish. Our argument breaks down when some of the functions, or their differences, are  close to integral powers of $t$, for example when they are  $t^k\log{t}$ or $t^k/\log\log{t}$ for some $k\in \N$.
  In both cases the vdC-operation (see Section~\ref{SS:VDC}) leads   to sequences of sublinear growth  for which we can no longer establish  Lemma~\ref{L:11}, in the first case because the estimate \eqref{E:654'} fails and in the second case because   in \eqref{E:z}   the length of the interval in the average is too short for Corollary~\ref{C:primes}
  to be applicable.

Finally, we remark that although the reduction offered by Theorem~\ref{T:JointErgodicity}   is very helpful when  dealing with  averages with independent iterates, as is the case in \eqref{E:fracprime},
it does not offer any help when the iterates are linearly dependent, which is the case for the averages 																			
\begin{equation}\label{E:AP}
\frac{1}{N}\sum_{n=1}^N \, T^{[p_n^{a}]}f\cdot T^{2[p_n^a]}g,
\end{equation}
where $a\in \R_+$ is not an integer.
We  do expect  the   $L^2(\mu)$-limit of the averages \eqref{E:AP} to be  equal to the $L^2(\mu)$-limit of the averages $ \frac{1}{N}\sum_{n=1}^N \,T^{n}f\cdot T^{2n}g$, but this remains a challenging open problem\footnote{Although the method of Theorem~\ref{T:CF} does give good seminorm bounds for the averages \eqref{E:AP}, the needed equidistribution properties on nilmanifolds present serious difficulties.}, see Problem~27 in \cite{Fr16}.





\subsection{Notation}
With $\N$ we denote the set of positive integers and with $\Z_+$ the set of non-negative integers. With $\P$ we denote the set of prime numbers.
With  $\R_+$ we denote the set of non-negative real numbers.
For $t\in \R$ we let $e(t):=e^{2\pi i t}$.
If $x\in \R_+$, 	when there is no danger for confusion, with   $[x]$ we denote both the integer part of $x$ and the set $\{1,\ldots, [x]\}$.
We denote with $\Re(z)$ the real part of the complex number $z$.

Let $a\colon \N\to \C$ be a  bounded sequence.
   If $A$ is a non-empty finite subset of $\N$  we let
$$
\E_{n\in A}\,a(n):=\frac{1}{|A|}\sum_{n\in A}\, a(n).
$$

If $a,b\colon \R_+\to \R$ are functions we write
\begin{itemize}
	\item  $a(t)\prec b(t)$ if $\lim_{t\to +\infty} a(t)/b(t)=0$;
	
	
	\item   $a(t)\sim b(t)$ if $\lim_{t\to +\infty} a(t)/b(t)$ exists and is non-zero;
	
	\item  $A_{c_1,\ldots, c_\ell}(t)\ll_{c_1,\ldots, c_\ell} B_{c_1,\ldots, c_\ell}(t)$ if there exist $t_0=t_0(c_1,\ldots, c_\ell)\in \R_+$ and  $C=C(c_1,\ldots, c_\ell)>0$ such that $|A_{c_1,\ldots, c_\ell}(t)|\leq  C |B_{c_1,\ldots, c_\ell}(t)|$ for all
	$t\geq t_0$.
\end{itemize}
We use the same notation for sequences $a,b\colon \N\to \R$.

Throughout, we let $L_N:=[e^{\sqrt{\log{N}}}]$, $N\in\N$.

We say that a sequence $(c_{N,\uh}(n))$ where $\uh\in [L_N]^k$, $n\in [N]$, $N\in\N$, is bounded, if there exists C>0 such that $|c_{N,\uh}(n)|\leq C$ for all
$\uh\in [L_N]^k$, $n\in [N]$, $N\in\N$.


\subsection{Acknowledgement} The author would like to thank the two anonymous referees for their valuable comments.

\section{Proof strategy}\label{S:strategy}
Our argument depends upon  a convenient criterion for joint ergodicity that was established recently in \cite{F21} (and was motivated by work in \cite{P19, PP19}).
In order to state it we  need to review the definition of the ergodic seminorms from \cite{HK05}.

\begin{definition}
For a given   ergodic system
$(X,\mu,T)$ and function $f\in L^\infty(\mu)$, we define $\nnorm{\cdot}_s$ inductively
as follows:
\begin{gather*}
	\label{E:first}
	\nnorm{f}_{1}\mathrel{\mathop:}=\Big| \int f \ d\mu\Big|\ ;\\
	\label{E:recur}
	\nnorm f_{s+1}^{2^{s+1}} \mathrel{\mathop:}=\lim_{N\to\infty}\frac{1}{N}\sum_{n=1}^{N}
	\nnorm{\bar{f}\cdot T^nf}_{s}^{2^{s}}, \quad s\in \N.
\end{gather*}
\end{definition}
It was shown in \cite{HK05}, via successive uses of the mean ergodic theorem, that for every $s\in \N$  the  above limit
exists and $\nnorm{\cdot}_s$ defines an increasing sequence of seminorms on
$L^\infty(\mu)$.
\begin{definition}
	We say that  the collection of sequences  $b_1,\ldots, b_\ell\colon \N\to \Z$ is:
	\begin{enumerate}
		\item {\em   good for seminorm estimates,} if  for every ergodic system $(X,\mu,T)$  there exists $s\in \N$ such that if $f_1,\ldots, f_\ell \in L^\infty(\mu)$
		and $\nnorm{f_m}_{s}=0$ for some $m\in \{1,\ldots, \ell\}$, then
		\begin{equation}\label{E:ml}
		\lim_{N\to\infty} \E_{n\in [N]}\, T^{b_1(n)}f_1\cdot\ldots \cdot  T^{b_m(n)}f_m= 0
		\end{equation}
		 in $L^2(\mu)$.\footnote{In practice,  $s$ can often be chosen independently of the system and   \eqref{E:ml} can be established with $m=\ell$ (note that Property~(i) with $m=\ell$ in \eqref{E:ml}  is a stronger property than  Property~(i) as stated).}
		
		\item {\em good for equidistribution}, if for all  $t_1,\ldots, t_\ell\in [0,1)$,  not all of them $0$, we have
		$$
		\lim_{N\to\infty} \E_{n\in[N]}\,  e(b_1(n)t_1+\cdots+ b_\ell(n)t_\ell) =0.
		$$
\end{enumerate}
\end{definition}
We remark that any collection of non-constant integer polynomial sequences with pairwise non-constant differences is known to be good for seminorm estimates \cite{L05} and examples of periodic systems show that no such collection is good for equidistribution (unless $\ell=1$ and $b_1(t)=\pm t+k$). On the other hand,
a collection of linearly independent fractional polynomials is known to be good both for seminorm estimates \cite[Theorem~2.9]{F10} and equidistribution (follows from \cite[Theorem~3.4]{KN74} and  \cite[Lemma~6.2]{F21}).

A crucial ingredient used  in the proof of our main result is the following result that gives convenient necessary and sufficient conditions for  joint ergodicity of a collection of sequences (see also \cite{BM21} for an extension of this result for sequences $b_1,\ldots, b_\ell\colon \N^k\to \Z$).
\begin{theorem}[\cite{F21}]\label{T:JointErgodicity}
	The sequences    $b_1,\ldots, b_\ell\colon \N\to \Z$  are jointly ergodic if and only if they are good for equidistribution and seminorm estimates.
\end{theorem}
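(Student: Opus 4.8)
The plan is to prove both implications of the equivalence, the ``jointly ergodic $\Rightarrow$ (good for equidistribution and seminorm estimates)'' direction being essentially soft and the converse carrying all the substance.

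\emph{Joint ergodicity $\Rightarrow$ the two properties.} The seminorm estimate is trivial with $s=1$: joint ergodicity forces the limit of the averages to equal $\prod_{i=1}^\ell\int f_i\,d\mu$, which vanishes whenever $\nnorm{f_i}_1=|\int f_i\,d\mu|=0$ for some $i$. For equidistribution, given $(t_1,\dots,t_\ell)\in[0,1)^\ell$ not all zero, I would test joint ergodicity on a suitable ergodic group rotation: split each $t_i$ into an irrational and a rational part, realize the irrational parts by an ergodic rotation on a torus $\T^d$ and the rational parts by an ergodic rotation on a finite cyclic group $\Z/q\Z$, and take the product system (ergodic, since the two eigenvalue groups intersect trivially). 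Feeding in characters $f_i$ adapted to the $t_i$ and using that a nontrivial character of an ergodic rotation integrates to $0$, joint ergodicity collapses to exactly $\E_{n\in[N]}\,e(b_1(n)t_1+\cdots+b_\ell(n)t_\ell)\to 0$.

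\emph{The two properties $\Rightarrow$ joint ergodicity.} Expanding $f_i=\int f_i\,d\mu+(f_i-\int f_i\,d\mu)$ in the product, it suffices to show $\E_{n\in[N]}\,T^{b_1(n)}g_1\cdots T^{b_\ell(n)}g_\ell\to 0$ in $L^2(\mu)$, for every ergodic $(X,\mu,T)$, whenever $\int g_i\,d\mu=0$ for some $i$. By Host--Kra theory $\nnorm{g}_s=0$ iff $\E(g\mid\CZ_{s-1})=0$, where $\CZ_{s-1}$ is the $(s-1)$-step nilfactor, an inverse limit of $(s-1)$-step nilsystems. Using that the collection is good for seminorm estimates with some parameter $s$ — the weak form stated being enough after a routine symmetrization upgrading the conclusion to control by each individual $\nnorm{g_i}_s$ — the factor $\CZ_{s-1}$ is characteristic, so the limit is unchanged if each $g_i$ is replaced by $\E(g_i\mid\CZ_{s-1})$; by approximation the problem reduces to $X=G/\Gamma$ an ergodic $(s-1)$-step nilsystem, which one may moreover take with $G$ connected and simply connected. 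On such a nilsystem I would run a descent on the nilpotency degree toward the maximal torus factor: decompose each $g_i$ into vertical-character components for the top central torus; the trivial components descend to a factor of strictly smaller degree, while a component with nontrivial vertical character is attacked with a van der Corput operation in $n$, after which the offending factor acquires trivial vertical character and itself descends, leaving averages that also involve the difference sequences $b_i(n+h)-b_i(n)$. Iterating, one reaches $X=\CZ_1$, an ergodic rotation $Tx=x+\alpha$ on a torus $\T^d$, where expanding in Fourier series the average becomes
\[
\sum_{k_1,\dots,k_\ell}\Big(\prod_{i=1}^\ell\widehat{g_i}(k_i)\Big)\,e\big((k_1+\cdots+k_\ell)\cdot x\big)\cdot\E_{n\in[N]}\,e\big(b_1(n)(k_1\cdot\alpha)+\cdots+b_\ell(n)(k_\ell\cdot\alpha)\big);
\]
ergodicity of the rotation forces $k_i\cdot\alpha\in\Z\Rightarrow k_i=0$, so ``good for equidistribution'' makes the inner average negligible for every tuple except $k_1=\cdots=k_\ell=0$, and the surviving term equals $\prod_i\widehat{g_i}(0)=\prod_i\int g_i\,d\mu=0$.

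\emph{Main obstacle.} The crux is the degree-descent on nilsystems in the regime of \emph{arbitrary} sequences $b_i$: Leibman's equidistribution theorem for polynomial orbits is unavailable, and the van der Corput steps spawn auxiliary averages whose iterates involve the differenced sequences $b_i(n+h)-b_i(n)$ and whose functions are ``moving'' (depending on $n$ as well as $h$). Showing that these auxiliary averages stay controlled — so that the induction closes and only the Kronecker factor remains as an obstruction — is the heart of the matter, and the place where the seminorm-estimate and equidistribution hypotheses must genuinely be played against each other.
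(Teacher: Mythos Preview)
The paper does not prove this theorem; it is quoted from \cite{F21} and used as a black box, so there is no in-paper proof to compare against. However, the remark immediately following the statement is informative: it says the argument in \cite{F21} ``uses `soft' tools from ergodic theory, and avoids deeper tools like the Host-Kra theory of characteristic factors \ldots\ and equidistribution results on nilmanifolds.'' Your proposed converse direction runs squarely through both of these deeper tools --- reducing to nilsystems via the factor $\CZ_{s-1}$ and then attempting a degree descent on the nilmanifold --- so your route is explicitly \emph{not} the one taken in \cite{F21}.

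More seriously, your approach has a genuine gap, which you yourself flag in the ``Main obstacle'' paragraph. The van der Corput step on a nilsystem produces auxiliary averages whose iterates involve the \emph{differenced} sequences $b_i(n+h)-b_i(n)$ (and further iterated differences as the descent continues). But the hypotheses ``good for seminorm estimates'' and ``good for equidistribution'' are posited only for the original tuple $(b_1,\ldots,b_\ell)$; nothing is assumed about the differences. For general integer sequences there is no mechanism by which the differenced family inherits either property, so the induction on the nilpotency degree cannot close. This is not a technicality: it is precisely why the characteristic-factors route does not straightforwardly deliver this theorem for arbitrary sequences, and presumably why \cite{F21} had to find an argument that stays at the level of seminorm manipulations and circle rotations without ever passing through nilmanifold structure.
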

\begin{remark}
The 	proof of this result  uses ``soft'' tools from ergodic theory,
and avoids  deeper tools like the Host-Kra theory of characteristic factors (see \cite[Chapter~21]{HK18} for a detailed description) and equidistribution results on nilmanifolds.
	\end{remark}
In view of this result, in order to establish Theorem~\ref{T:Main}, it suffices to show that a collection of linearly independent fractional polynomials evaluated at  primes  is  good for seminorm estimates and equidistribution.
The good equidistribution property is a consequence of the following  result \cite[Theorem~2.1]{BKMST14}:
\begin{theorem}[\cite{BKMST14}]\label{T:BKMST}
	If $a(t)$ is a non-zero fractional polynomial, then the sequence
	$(a(p_n))$ is equidistributed $\!  \!  \pmod{1}$.
\end{theorem}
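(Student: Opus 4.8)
The plan is to reduce the statement, via Weyl's criterion, to a bound on an exponential sum over primes, and then to estimate that sum by combining Vaughan's identity with the van der Corput method for exponential sums of fractional monomials. Since $(a(p_n))_n$ is equidistributed $\pmod 1$ precisely when $\frac1N\sum_{n=1}^N e(h\,a(p_n))\to 0$ for every nonzero $h\in\Z$, and since $h\,a(t)=\sum_j h\alpha_j t^{d_j}$ is again a nonzero fractional polynomial (same exponents $d_j\notin\Z$, coefficients not all zero), it suffices to treat $h=1$. Setting $Q:=p_N$ so that $\pi(Q)=N$ with $\pi(Q):=|\{p\in\P:p\le Q\}|$, the average $\frac1N\sum_{n\le N}e(a(p_n))$ equals $\frac1{\pi(Q)}\sum_{p\le Q}e(a(p))$, and by partial summation together with the prime number theorem (the contribution of proper prime powers being $O(\sqrt Q\log^2 Q)$) the whole problem reduces to proving
\[
\sum_{n\le N}\Lambda(n)\,e(a(n))=o(N),
\]
where $\Lambda$ is the von Mangoldt function. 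After combining terms with equal exponents and discarding vanishing coefficients I may assume $a(t)=\sum_{j=1}^{r}\alpha_j t^{d_j}$ with $d_1>\cdots>d_r$, all $d_j\in\R_+\setminus\Z$ and all $\alpha_j\neq 0$, so that $a$ and all the dilates $m\mapsto a(dm)$ and differences $d\mapsto a(dm_1)-a(dm_2)$ are governed by the fractional-degree term $d_1$.

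The analytic workhorse is a van der Corput--type estimate for a fractional monomial: for every fixed $\theta\in\R_+\setminus\Z$ and $A>0$ there is $\delta=\delta(\theta,A)>0$ with $\sum_{M<m\le 2M}e(\beta m^{\theta})\ll_{\theta,A}M^{1-\delta}$ whenever $1\le M\le N$ and $M^{-A}\le|\beta|\le M^{A}$. I would obtain this by iterating Weyl differencing (equivalently applying a $k$-th derivative test), choosing the number $k$ of differencings adaptively according to $\log|\beta|/\log M$ so that the relevant derivative, of size $\asymp|\beta|M^{\theta-k}$, falls in the regime where the test saves a fixed power of $M$; the admissible windows of $k$ for consecutive values overlap, which yields a uniform $\delta$. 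In the residual regime where $\theta<1$ and $|\beta|M^{\theta-1}$ is small, so that no differencing produces a large derivative, I would instead use the first--derivative (Kusmin--Landau) test together with the elementary bound $\int_1^M e(\beta t^{\theta})\,dt\ll M^{1-\theta}/|\beta|$. Splitting into $O(\log N)$ dyadic blocks then gives the same power saving for $\sum_{m\le M}e(a(dm))$ (a fractional polynomial in $m$ with leading coefficient $\alpha_1 d^{d_1}$) and, when $m_1\neq m_2$, for $\sum_{d\le D}e(a(dm_1)-a(dm_2))$ (a fractional polynomial in $d$ with leading coefficient $\alpha_1(m_1^{d_1}-m_2^{d_1})\neq 0$), with constants uniform in the auxiliary parameters over the ranges below.

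Next I would apply Vaughan's identity to decompose $\sum_{n\le N}\Lambda(n)e(a(n))$ into $O(\log^2 N)$ sums of Type I, $\sum_{d\le D}c_d\sum_{m\le N/d}e(a(dm))$ with $D\le N^{2/3}$, and Type II, $\sum_{D<d\le 2D}c_d\sum_m b_m e(a(dm))$ with $N^{1/3}\le D\le N^{2/3}$ and $m$ in an interval of length $\asymp N/D$ (the coefficients being bounded by divisor functions, which is harmless for a qualitative $o(N)$ bound). For a Type I sum the basic input, applied to the inner sum with leading coefficient $\alpha_1 d^{d_1}$ and using $1\le d\le N^{2/3}$, $N/d\ge N^{1/3}$, gives $\sum_{m\le N/d}e(a(dm))\ll (N/d)^{1-\delta}$ uniformly in $d$, so the whole Type I sum is $\ll N^{1-\delta}\log N=o(N)$. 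For a Type II sum, Cauchy--Schwarz in $d$ and then expanding the square in $m$ bounds its square by $\ll D\sum_{m_1,m_2}\bigl|\sum_{D<d\le 2D}e(a(dm_1)-a(dm_2))\bigr|$; the diagonal $m_1=m_2$ contributes $\ll DN$, while for $m_1\neq m_2$ the inner sum in $d$ has leading coefficient of size $\asymp(N/D)^{d_1-1}|m_1-m_2|$, so the basic input yields a power saving which, summed over $m_1,m_2$ in an interval of length $\asymp N/D$, contributes $o(N^2/D)$; hence the square of the Type II sum is $\ll DN+o(N^2)=o(N^2)$ since $D\le N^{2/3}$, i.e.\ the Type II sum is $o(N)$. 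Summing the $O(\log^2 N)$ pieces then establishes the displayed bound.

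The hard part is the uniformity and power-saving strength of the fractional-monomial estimate across the full ranges of the auxiliary parameters produced by Vaughan's identity: in the Type I sums the dilation $d$ can be as large as $N^{2/3}$, which inflates the leading coefficient $\alpha_1 d^{d_1}$, and in the Type II sums the factor $|m_1^{d_1}-m_2^{d_1}|$ runs over many scales, so in each dyadic block one must choose the number of differencings correctly (depending only on $d_1$, exploiting the overlap of admissible windows to keep $\delta$ uniform) and then check that the resulting bound is summable over the auxiliary variables. A secondary technical nuisance, recurring throughout, is that whenever $d_1<1$ (or an iterated difference has degree in $(-1,0)$) no amount of differencing gives a large derivative, forcing a separate treatment via the first--derivative test and the elementary estimate for $\int e(a(t))\,dt$. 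By contrast, the reduction via Weyl's criterion and the passage from $\sum_n\Lambda(n)e(a(n))$ to the normalized sum over the first $N$ primes are routine.
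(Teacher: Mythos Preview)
The paper does not prove this theorem; it is quoted as \cite[Theorem~2.1]{BKMST14} and used as a black box (via Corollary~\ref{C:equi}) to verify the equidistribution hypothesis in Theorem~\ref{T:JointErgodicity}. So there is no ``paper's own proof'' to compare against.

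That said, your outline is exactly the standard route and is essentially what \cite{BKMST14} does: reduce via Weyl's criterion to bounding $\sum_{n\le N}\Lambda(n)e(a(n))$, decompose with Vaughan's identity into Type~I and Type~II sums, and estimate the resulting bilinear pieces using van der Corput's method (iterated Weyl differencing / $k$-th derivative tests, with Kusmin--Landau for the sublinear regime). The structure of your argument is correct, and you have identified the genuine technical points: the need to choose the number of differencings adaptively so that the power saving $\delta$ is uniform across the full parameter ranges produced by Vaughan's identity, and the separate treatment required when the effective exponent drops below $1$. One small comment: in your Type~I discussion you should also note that the coefficients $c_d$ coming from Vaughan are bounded by divisor-type functions, so after summing in $d$ the saving $(N/d)^{-\delta}$ must absorb a factor $D^{\delta+\varepsilon}$; with $D\le N^{2/3}$ this still gives $o(N)$, but it is worth making explicit. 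Overall the sketch is sound and matches the method of the cited source.
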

Using the previous result and \cite[Lemma~6.2]{F21} we immediately deduce the following:
\begin{corollary}\label{C:equi}
If $a_1,\ldots, a_\ell$ are linearly independent fractional polynomials, then
the collection of sequences $[a_1(p_n)], \ldots, [a_\ell(p_n)]$ is good for equidistribution.
\end{corollary}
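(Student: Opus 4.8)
The plan is to derive this from Theorem~\ref{T:BKMST} via the reduction of \cite[Lemma~6.2]{F21}. Unwinding the definition of ``good for equidistribution'', the task is to show that for every $(t_1,\dots,t_\ell)\in[0,1)^\ell$ not all equal to $0$ we have
\[
\lim_{N\to\infty}\E_{n\in[N]}\,e\Big(\sum_{j=1}^{\ell}t_j\,[a_j(p_n)]\Big)=0 .
\]
The mechanism behind \cite[Lemma~6.2]{F21} is to write $e(t_j[a_j(p_n)])=e(t_j a_j(p_n))\,e(-t_j\{a_j(p_n)\})$, expand the bounded $1$-periodic function $u\mapsto e(-t_j\{u\})$ in a Fourier series, with coefficients $c_k(t_j)=\int_0^1 e(-(t_j+k)u)\,du$, and then multiply out, rewriting the average as a sum over $\bk=(k_1,\dots,k_\ell)\in\Z^\ell$ of the quantities
\[
\Big(\prod_{j=1}^{\ell}c_{k_j}(t_j)\Big)\,\E_{n\in[N]}\,e\Big(\sum_{j=1}^{\ell}(t_j+k_j)\,a_j(p_n)\Big) .
\]
In this way everything reduces to the one-dimensional equidistribution statement that $\E_{n\in[N]}\,e\big(\sum_{j=1}^{\ell} c_j\, a_j(p_n)\big)\to 0$ for every non-zero real vector $(c_1,\dots,c_\ell)$.

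This last statement is immediate here. Since $a_1,\dots,a_\ell$ are linearly independent fractional polynomials, the function $b:=\sum_{j=1}^{\ell}c_j a_j$ is again a fractional polynomial --- every exponent occurring in it is a positive non-integer --- and it is not identically $0$, because $(c_1,\dots,c_\ell)$ is non-zero. Hence, by Theorem~\ref{T:BKMST}, the sequence $(b(p_n))$ is equidistributed $\pmod 1$, and Weyl's criterion gives $\E_{n\in[N]}\,e(b(p_n))\to 0$. Applied to the vectors above: each $(t_1+k_1,\dots,t_\ell+k_\ell)$ is non-zero, since if $t_j\ne 0$ then $t_j+k_j\notin\Z$, and therefore every individual term of the $\bk$-sum tends to $0$.

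The one delicate point --- which is the substance of the convergence argument in \cite[Lemma~6.2]{F21}, and the step I expect to be the only real obstacle --- is the interchange of the infinite $\bk$-sum with the limit in $N$: the coefficients satisfy only $c_k(t_j)=O(1/(1+|k|))$, so the series is not absolutely convergent. I would handle this by truncating the Fourier series at a level $K$; the partial sums $S_{K,j}$ of the bounded-variation function $u\mapsto e(-t_j\{u\})$ are uniformly bounded and converge to it in $L^1[0,1)$. Replacing $e(-t_j\{a_j(p_n)\})$ by $S_{K,j}(a_j(p_n))$ alters the average by a quantity whose absolute value, averaged over $n\in[N]$, converges as $N\to\infty$ --- using that each $(a_j(p_n))$ is equidistributed $\pmod 1$ by Theorem~\ref{T:BKMST} and that the relevant single-variable integrands are Riemann integrable --- to a fixed multiple of $\sum_{j=1}^{\ell}\int_0^1 |e(-t_j\{u\})-S_{K,j}(u)|\,du$, which is small once $K$ is large. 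For each fixed $K$ the truncated average is a finite sum of terms of the form displayed above, each tending to $0$ by the previous paragraph; letting first $N\to\infty$ and then $K\to\infty$ completes the argument. It is worth stressing that linear independence is exactly what rules out the more serious obstacles one might fear here --- the need to establish equidistribution of the vector $(a_1(p_n),\dots,a_\ell(p_n))$ on $\T^{\ell}$, or of combinations of higher polynomial degree --- since in the end everything collapses to Theorem~\ref{T:BKMST} for a single, genuinely fractional, polynomial.
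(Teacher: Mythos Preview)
Your proposal is correct and takes essentially the same approach as the paper, which simply cites Theorem~\ref{T:BKMST} together with \cite[Lemma~6.2]{F21} and states that the corollary follows immediately. What you have done is spell out the mechanism behind that lemma (Fourier expansion of $u\mapsto e(-t_j\{u\})$, truncation, and the use of equidistribution of each $a_j(p_n)$ to control the tail), and then verify that the hypothesis needed---that every non-trivial real linear combination $\sum_j c_j a_j$ is a non-zero fractional polynomial, so that Theorem~\ref{T:BKMST} applies---follows from linear independence; this is exactly the intended reduction.
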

We let $\Lambda'\colon \N\to \R_+$ be the following  slight modification of the  von Mangoldt function: $\Lambda'(n):=\log(n)$ if $n$ is a  prime number  and $0$ otherwise.
To establish that the collection  $[a_1(p_n)], \ldots, [a_\ell(p_n)]$ is good
for  seminorm estimates it suffices to prove the following result (the case  $w_N(n):=\Lambda'(n)$, $N,n\in \N$,   implies Theorem~\ref{T:CFprimes} in a standard way, see for example \cite[Lemma~2.1]{FHK11}):

\begin{theorem}\label{T:CF}
	Suppose that the fractional polynomials
	$a_1,\ldots, a_\ell$ and their pairwise differences are  non-zero. Then there exists $s\in \N$ such that the following holds: If $(X,\mu,T)$ is an ergodic system and $f_1,\ldots, f_\ell\in L^\infty(\mu)$ are such that  $\nnorm{f_i}_{s}=0$ for some $i\in \{1,\ldots, \ell\}$, then for every $1$-bounded sequence $(c_{N}(n))$ we have
	\begin{equation}\label{E:i1}
	\lim_{N\to\infty}		\E_{n\in [N]}\,  w_N(n) \cdot T^{[a_1(n)]}f_1\cdot \ldots \cdot T^{[a_\ell(n)]}f_\ell=0
	\end{equation}
	 in $L^2(\mu)$,  where $w_N(n):=\Lambda'(n)\cdot c_N(n)$, $n\in[N]$, $N\in\N$. 	
\end{theorem}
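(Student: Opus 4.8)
The plan is to establish Theorem~\ref{T:CF} by the standard van der Corput (vdC) induction scheme adapted to the fractional setting, reducing everything at the end to a known seminorm estimate for multivariate polynomials with real coefficients evaluated at integers. First I would set up the inductive framework on the ``complexity'' of the tuple $(a_1,\ldots,a_\ell)$, measured by something like the maximal degree and the number of functions, together with an induction on $\ell$; the base case $\ell=1$ follows from equidistribution of $(a(p_n))\bmod 1$ (Theorem~\ref{T:BKMST}) combined with a spectral/van der Corput argument showing that $\nnorm{f}_s=0$ for a suitable $s$ forces the average to vanish. For the inductive step I would normalise so that $i=\ell$ (using that we only need the estimate to hold \emph{for some} $i$, and composing with $T^{-[a_\ell(n)]}$ to rewrite the average with iterates $[a_j(n)]-[a_\ell(n)]$; here the hypothesis that pairwise differences are non-zero fractional polynomials keeps us inside the class), and then apply the vdC inequality in the $n$-variable. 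This produces, for a shift parameter $h$ from a short range of length $L_N=[e^{\sqrt{\log N}}]$, averages whose iterates are the ``discrete derivatives'' $[a_j(n+h)]-[a_j(n)]$, which to leading order behave like $[a_j'(n)h]$ — a fractional polynomial of strictly smaller degree in $n$, multiplied by the new parameter $h$.

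The key steps, in order, are: (1) reduce to $i=\ell$ and to differenced iterates; (2) apply vdC in $n$ and then a further vdC or Cauchy--Schwarz to absorb the weight $w_N(n)=\Lambda'(n)c_N(n)$, using the sieve-theoretic upper bounds on prime $k$-tuples (Section~\ref{SS:sieve}) to control the resulting correlations $\E_n \Lambda'(n)\Lambda'(n+h)\cdots$ uniformly in the shift tuple; (3) handle the fractional-part errors: write $[a_j(n+h)]-[a_j(n)] = a_j(n+h)-a_j(n) + O(1)$ and, more delicately, Taylor-expand $a_j(n+h)-a_j(n)$ so that the dominant term is (a fractional polynomial in $n$) times $h$ — the crucial point being that after finitely many vdC steps the growth exponent drops below $1$, i.e.\ the iterates become \emph{sublinear} in $n$ on the relevant scale, at which stage the equidistribution input on the circle (rather than on a nilmanifold) suffices; (4) after exhausting the vdC steps, the averages are controlled by averages over $n\in[N]$ and $\uh\in[L_N]^k$ of $T$-iterates given by multivariate real-coefficient polynomials in $(n,\uh)$, applied to functions one of which has a vanishing Host--Kra seminorm, and this is exactly the situation covered by Leibman's theorem \cite{L05} (after approximating the real-coefficient polynomials by ones that are constant on short progressions, or invoking the real-coefficient version directly). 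Finally, tracking the dependence on $s$ through the induction yields the claimed uniform $s\in\N$, and the $1$-boundedness of $(c_N(n))$ is used only through crude $\ell^\infty$ bounds and the sieve estimates, so it causes no trouble.

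The main obstacle I expect is step (3) combined with the short averaging ranges forced on the shift parameters. Each vdC step introduces a new parameter $h$ ranging only over $[L_N]$, a window far shorter than $[N]$, so the standard equidistribution statements — which are stated for averages over full intervals — do not apply off the shelf; one must either prove or quote quantitative versions valid on intervals of length $L_N$ (this is presumably the role of Corollary~\ref{C:primes} and the estimates like \eqref{E:654'} alluded to in the ``limitations'' subsection), and keeping all error terms summable against $L_N^{-1}$-type gains is the delicate bookkeeping. A secondary subtlety is ensuring that after differencing, the fractional polynomials $[a_j(n+h)]-[a_j(n)]$ (and their pairwise differences) remain non-degenerate fractional polynomials in $n$ of the right growth type, so that the induction hypothesis genuinely applies; degenerate cases where a difference becomes close to an integer polynomial or to something of sublinear-but-too-slow growth must be excluded, and it is precisely here that the ``fractional power growth'' hypothesis — as opposed to integer power growth — is indispensable, as the authors emphasise.
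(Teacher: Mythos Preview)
Your outline captures the broad architecture correctly --- PET-type induction via repeated vdC, sieve bounds on prime tuples, and a final appeal to Leibman --- and you have correctly anticipated that the short range $[L_N]$ of the shift parameters is the main source of difficulty. However, there is a genuine gap in how you treat the sublinear stage, and a related misunderstanding of where the sieve input enters.

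First, vdC does \emph{not} absorb the weight. Each application of \eqref{E:VDC1} replaces $w_{N}(n)=\Lambda'(n)c_N(n)$ by a differenced weight $(\Delta_{\uh}\Lambda')(n)\cdot c_{N,\uh}(n)$, which is still unbounded (of size $(\log N)^{2^k}$). So after you have driven all iterates to sublinear growth, you are left with averages of the form
\[
\E_{\uh\in[L_N]^k}\Bigl\|\E_{n\in[N]}\,(\Delta_{\uh}\Lambda')(n)\,c_{N,\uh}(n)\prod_i T^{[a_i(\uh,n)]}f_{N,\uh,i}\Bigr\|_{L^2(\mu)},
\]
and this is precisely where the paper does the most work (Section~4), not where ``equidistribution on the circle suffices''. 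Equidistribution of $(a(p_n))$ is used only for the \emph{other} half of the joint ergodicity criterion (Corollary~\ref{C:equi}); it plays no role in the seminorm estimates.

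The missing idea is a \emph{change of variables} $n\mapsto n^{1/d}$ in the sublinear case (Lemma~\ref{L:11}). One partitions $[N]$ into intervals $J_{n,\uh}$ on which each $a_i(\uh,\cdot)$ varies by $O(1)$; on each such interval the iterates become constant (up to bounded errors handled by Lemma~\ref{L:errors}), while the weight becomes an \emph{average} $\E_{n_1\in J_{n,\uh}}(\Delta_{\uh}\Lambda')(n_1+c)$. It is only at this point that Corollary~\ref{C:primes} applies, replacing the unbounded weight by $C(\uh)\cdot z_{N,\uh}(n)$ with $z$ bounded and $\E_{\uh}C(\uh)^2$ bounded; Cauchy--Schwarz in $\uh$ then eliminates the weight altogether. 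This step uses essentially that fractional polynomials are neither too slow (so the intervals $J_{n,\uh}$ are long enough for the sieve bound to apply) nor too close to linear (so the change of variables is well-defined); see the footnotes in the proof of Lemma~\ref{L:11}. After that, one still has sublinear fractional iterates in $n$, and further vdC steps (Lemma~\ref{L:12}) are needed to eliminate $n$ entirely, leaving iterates that are polynomials in the $\uh$-variables \emph{alone} --- not in $(n,\uh)$ as you wrote --- at which point Lemma~\ref{L:13} (the real-coefficient version of Leibman) applies over the full range $[L_N]^k$.

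Finally, your proposed base case $\ell=1$ via equidistribution plus a spectral argument does not work as stated: the weight $\Lambda'$ is unbounded, so passing to the Kronecker factor and invoking Theorem~\ref{T:BKMST} is not enough without first taming the weight by the mechanism above. The paper's base case is instead ``all iterates sublinear'', and it is the hardest part of the proof.
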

\begin{remarks}
	$\bullet$
The  sequence $(c_N(n))$ is not essential in order to deduce Theorem~\ref{T:CFprimes}.
It is used because it helps us  absorb 
error term that often appear in our argument.

$\bullet$	Our proof shows that
the place of the sequence $(\Lambda'(n))$ can take any non-negative sequence
$(b(n))$ that satisfies  properties $(i)$ and $(ii)$ of Corollary~\ref{C:primes}   and the estimate $b(n)\ll n^\varepsilon$ for every $\varepsilon>0$.
\end{remarks}
In order to prove Theorem~\ref{T:CF}, we use an induction argument, similar to the
polynomial exhaustion technique (PET-induction) introduced in \cite{B87}, which is based on variants of the van der Corput inequality stated immediately after  Lemma~\ref{L:VDC}. The fact that the weight sequence $(w_N(n))$ is unbounded, forces us to apply Lemma~\ref{L:VDC} in the form given in \eqref{E:VDC1}  with $L_N\in \N$ that satisfy   $L_N\succ (\log{N})^A$ for every $A>0$.  On the other hand, since we have to take care of some error terms  that are of the order $L_N^B/N^a$ for
arbitrary $a, B>0$, we are also  forced to take   $L_N\prec N^a$ for every $a>0$ in order for these errors to be negligible. These two estimates are satisfied for example when
$L_N=[e^{\sqrt{\log{N}}}]$, $N\in\N$, which is the value of $L_N$ that we use henceforth.

During the course of the PET-induction argument, we have to keep close track of the additional parameters $h_1,..., h_k$ that arise after each   application of  Lemma~\ref{L:VDC} in the form that is given  in \eqref{E:VDC1}. This is the reason why we prove
a more general variant of Theorem~\ref{T:CF} that is stated in Theorem~\ref{T:CF'}  and involves fractional polynomials with coefficients depending on finitely many parameters. It turns out that the most laborious part of its proof is the base case of the induction   where all iterates have sublinear growth. This case
is dealt in three steps. First, in Lemma~\ref{L:11} we use a change of variables argument and the number theory input from Corollary~\ref{C:primes} in order to reduce matters to the case where the weight sequence $(w_N(n))$ is bounded.
Next, in Lemma~\ref{L:12} we use  another change of variables argument
and Lemma~\ref{L:VDC}
in order to successively ``eliminate'' the sequences $a_1, \ldots, a_\ell$, and, after
$\ell$-iterations, we get an upper bound that involves iterates given by the integer parts of polynomials in several variables   with real coefficients. Lastly, in Lemma~\ref{L:13} we show that averages with such iterates obey good seminorm bounds. This
last step is carried out by adapting an argument from \cite{L05} to our setup; this is done by another PET-induction, which this time uses Lemma~\ref{L:VDC} in the form that is given  in \eqref{E:VDC2}. In Sections~\ref{Ex1} and \ref{Ex2} the reader will find  examples that explain how these arguments work in some model cases that contain the essential ideas of the general arguments.

To conclude  this  section, we remark that  in order to prove Theorem~\ref{T:Main} it suffices to prove Theorem~\ref{T:CF}; the remaining sections are devoted to this task.

\section{Seminorm estimates - Some preparation}
\subsection{A more general statement}
In  order to prove Theorem~\ref{T:CF}, it will be  convenient to establish a technically more complicated statement that is better suited for a PET-induction argument. We state it in this subsection.



 Throughout, the sequence  $L_N$  is chosen to satisfy $(\log{N})^A\prec L_N\prec N^a$ for all $A,a>0$; so we can take for example
 $$
 L_N:=[e^{\sqrt{\log{N}}}], \quad N\in \N.
 $$

 	

With $\R[t_1,\ldots, t_k]$ we denote  the set of polynomials with real coefficients in $k$-variables.
\begin{definition}
	We say that $a\colon \Z^k\times \R_+\to \R$ is a {\em polynomial with real exponents and $k$-parameters}, if it has the form
	$$
	a(\uh,t)=\sum_{j=0}^r  p_j(\uh)\, t^{d_j},
	$$
	for some $r\in \N$, $0=d_0<d_1<\cdots<d_r\in \R_+$, and
	$p_0,\ldots, p_r\in \R[t_1,\ldots, t_k]$. If $d_1,\ldots, d_r\in \R_+\setminus \Z$, we call it a {\em fractional polynomial with $k$-parameters}.
	 If $p_j$ is non-zero for some $j\in \{1,\ldots, r\},$ we say that  $a(\uh,t)$ is {\em non-constant}.
	We define the  {\em fractional degree} of $a(\uh,t)$, and denote it by  $\fdeg(a)$, to be the maximum exponent $d_j$ for which the polynomial $p_j$ is non-zero.  We call the integer part of its fractional degree  the {\em  (integral) degree}  of $a(\uh,t)$ and denote it by $\deg(a)$. We also let $\deg(0):=-1$.
	\end{definition}
 For example, the fractional polynomial with $1$-parameter
$h^2 t^{0.5}+(h^2\sqrt{2}+h)t^{0.1}$ has fractional degree $0.5$ and degree $0$.

\begin{definition}
 We say that a collection $a_1,\ldots, a_\ell$ of polynomials with real exponents  and $k$-parameters is {\em nice} if
 \begin{enumerate}
 \item  $\fdeg(a_i)\leq \fdeg(a_1)$ for $i=2,\ldots, \ell$, and

 \item    the functions $a_1, \ldots a_\ell$ and the functions  $a_1-a_2, \ldots, a_1-a_\ell$   are non-constant in the variable $t$ (and as a consequence they have positive fractional degree).
 \end{enumerate}
\end{definition}



Given a sequence $u\colon\N\to \C$, we let $(\Delta_hu)(n):=u(n+h)\cdot \overline{u(n)}$, $h,n\in\N$, and if $\uh=(h_1,\ldots, h_k)$, we let $(\Delta_{\uh})(u(n)):=(\Delta_{h_k}\cdots\Delta_{h_1}) (u(n))$, $h_1,\ldots, h_k,n\in\N$. For example, $(\Delta_{(h_1,h_2)})(u(n))= u(n+h_1+h_2)\cdot \overline{u}(n+h_1)\cdot \overline{u}(n+h_2)\cdot u(n)$, $h_1,h_2,n\in\N$.
\begin{theorem}\label{T:CF'}
	For $k\in\Z_+, \ell\in\N$, let    $a_1,\ldots, a_\ell\colon \N^k\times \N\to \R$ be  a nice collection of fractional  polynomials with $k$-parameters and $(c_{N,\uh}(n))$ be a $1$-bounded sequence. Then
	there exists $s\in \N$ such that the following holds: If $(X,\mu,T)$ is a system and  $f_{N,\uh,1},\ldots, f_{N,\uh,\ell}\in L^\infty(\mu)$, $\uh\in [L_N]^k,N\in\N$,  are $1$-bounded functions with  $f_{N,\uh,1}=f_1$, $\uh\in \N^k,N\in\N$, and
	$\nnorm{f_1}_s=0$, then
	\begin{equation}\label{E:mainav}
	\lim_{N\to\infty}	\E_{\underline{h}\in [L_N]^k}	
	\norm{\E_{n\in [N]}\,  w_{N,\underline{h}}(n)\cdot \prod_{i=1}^\ell T^{[a_i(\underline{h},n)]}f_{N,\uh,i}}_{L^2(\mu)}=0,
	\end{equation}
	where  $w_{N,\uh}(n):=(\Delta_\uh\Lambda')(n)\cdot c_{N,\uh}(n)$, $ \uh\in [L_N]^k, n\in [N],  N\in\N$.
\end{theorem}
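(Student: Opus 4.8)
The plan is to prove Theorem~\ref{T:CF'} by a PET-induction on the complexity of the nice collection $a_1,\ldots,a_\ell$, where complexity is measured by a suitable ``weight vector'' recording the fractional degrees and the number of iterates sharing the leading fractional degree (ordered lexicographically, as in \cite{B87, L05}). The base case is the one where every $a_i$ has sublinear growth, i.e.\ $\deg(a_i)=0$ for all $i$; this is the technical heart and, as flagged in Section~\ref{S:strategy}, is handled in three stages via Lemmas~\ref{L:11}, \ref{L:12}, \ref{L:13}. In the inductive step, assuming $\deg(a_1)=\fdeg(a_1)\ge 1$ (or more generally that not all iterates are sublinear), I would apply the van der Corput inequality in the form \eqref{E:VDC1} with the parameter $L_N=[e^{\sqrt{\log N}}]$: bounding the square of the left-hand side of \eqref{E:mainav} by an average over a new parameter $h\in[L_N]$ of expressions of the same shape, but with the weight $(\Delta_\uh\Lambda')(n)$ replaced by $(\Delta_{(\uh,h)}\Lambda')(n)$, the functions replaced by the new $1$-bounded family $\{\overline{f_{N,\uh,i}}\cdot T^{[a_i(\uh,n+h)]-[a_i(\uh,n)]}(\cdots)\}$, and the iterates replaced by the differenced polynomials $a_i(\uh,n+h)-a_j(\uh,n)$. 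The standard PET bookkeeping shows that this new collection — after discarding a term, keeping the one carrying $f_1$, and absorbing $[a_i(\uh,n+h)]-[a_i(\uh,n)]$ which is $a_i(\uh,n+h)-a_i(\uh,n)+O(1)$ into a bounded auxiliary sequence that gets folded into $c_{N,\uh}$ — is again nice, with strictly smaller complexity, so the induction closes. One must check that ``niceness'' is preserved (the pairwise differences and the functions themselves remain non-constant in $t$), which is where the hypothesis that the $a_i$ and their pairwise differences are non-constant, together with fractional exponents, is used: differencing a term $\alpha t^d$ with $d\notin\Z$ in the variable $n$ produces a leading term $\sim \alpha d\, h\, t^{d-1}$, and $d-1$ is again a non-integer, so no iterate collapses to a constant or to a genuine integer polynomial.

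For the base case I would follow the three-step reduction outlined in Section~\ref{S:strategy}. \textbf{Step 1 (Lemma~\ref{L:11}).} Here one removes the unbounded weight $(\Delta_\uh\Lambda')(n)$. The idea is a change of variables exploiting that for sublinear $a_i$ the quantity $[a_i(\uh,n)]$ is essentially constant on intervals of length $\asymp n^{1-\fdeg(a_i)+\varepsilon}\gg L_N$, together with the sieve-theoretic input of Corollary~\ref{C:primes}: the average of $(\Delta_\uh\Lambda')(n)$ (an upper bound on prime $k$-tuple counts, which is $O((\log)^{O(1)})$ on average and at most $n^\varepsilon$ pointwise) over such an interval is, up to a bounded factor, comparable to what it would be if $\Lambda'$ were replaced by the constant $1$; this is precisely where the estimates \eqref{E:654'} and \eqref{E:z} (and the requirement $L_N\prec N^a$) enter. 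The outcome is that \eqref{E:mainav} with the prime weight is controlled by the same average with a $1$-bounded weight, at the cost of harmless errors of size $L_N^{O(1)}/N^{a}$. \textbf{Step 2 (Lemma~\ref{L:12}).} Now, with a bounded weight, I would iterate the van der Corput inequality \eqref{E:VDC1} $\ell$ times to ``eliminate'' $a_1,\ldots,a_\ell$ one at a time. Each application differences the iterates; since all exponents are positive non-integers, after enough steps the iterates become linear-plus-lower-order, and collecting the integer-part corrections one arrives at an upper bound in which the iterates are $[P_i(\underline{h},\underline{h}',\ldots,n)]$ for genuine multivariate polynomials $P_i$ with \emph{real} coefficients in the auxiliary variables and $n$. \textbf{Step 3 (Lemma~\ref{L:13}).} Finally, for averages whose iterates are integer parts of real multivariate polynomials, one shows that $\nnorm{f_1}_s=0$ forces the limit to vanish; this is the multivariate-Leibman statement \cite{L05}, re-proved in the present weighted/parametrized setting by a further PET-induction using the van der Corput inequality in the form \eqref{E:VDC2}. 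Chaining the three steps gives the base case.

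The main obstacle, I expect, is Step~1 — controlling the unbounded von Mangoldt-type weight $(\Delta_\uh\Lambda')(n)$ uniformly over $\uh\in[L_N]^k$. The difficulty is twofold: the weight is genuinely unbounded (it can be as large as $(\log n)^{O(1)}$ on average but one also needs the crude pointwise bound $n^\varepsilon$ to handle sparse large values), and the change-of-variables/averaging argument must be arranged so that the interval over which one averages the weight is simultaneously (a) long enough for the sieve estimate in Corollary~\ref{C:primes} to apply — this forces $L_N\succ(\log N)^A$ for all $A$ — and (b) short enough that $[a_i(\uh,n)]$ is constant on it for every sublinear $a_i$ and that error terms $L_N^{O(1)}/N^{a}$ are negligible. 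The specific choice $L_N=[e^{\sqrt{\log N}}]$ is exactly what reconciles these constraints. By contrast, Steps 2 and 3, while computationally involved, are conceptually routine PET-induction arguments of a by-now-standard flavour; and the inductive (non-base) part of the overall argument is a clean complexity reduction once one verifies that niceness is inherited under differencing, which uses the fractional-exponent hypothesis in an essential way (this is precisely the point at which an analogous argument for iterates of integer-power growth breaks down).
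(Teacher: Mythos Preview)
Your outline matches the paper's proof closely: PET-induction on the type of the family, with the base case (all iterates sublinear) handled by the chain Lemma~\ref{L:11} $\to$ Lemma~\ref{L:12} $\to$ Lemma~\ref{L:13}, and the inductive step carried out via \eqref{E:VDC1} followed by a composition with $T^{-[a(\uh,n)]}$ for a carefully chosen $a\in\mathcal{A}$ (this choice is the content of the paper's Lemma~\ref{L:Reduce}).

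One point in your sketch of Step~2 (Lemma~\ref{L:12}) is off and worth correcting. You say that iterating \eqref{E:VDC1} on the sublinear iterates makes them ``linear-plus-lower-order'', but differencing a term $p(\uh)n^d$ with $d\in(0,1)$ in $n$ gives $\sim d\,p(\uh)\,h\,n^{d-1}$, which is \emph{negligible}, not linear. The paper's mechanism is different: each round of Lemma~\ref{L:12} first re-applies the change of variables $n\mapsto n^{1/d}$ (i.e.\ Lemma~\ref{L:11} in its bounded-weight form) to make the leading term of $a_1$ genuinely linear in $n$, and only then applies \eqref{E:VDC1}; after vdC, that linear part becomes a polynomial in the parameters $(\uh,h_{k+1})$ alone with \emph{no} $n$-dependence, and the number of iterates drops by one. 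After $\ell$ rounds the variable $n$ has disappeared entirely, so the output of Lemma~\ref{L:12} is an average over $\uh\in[L_N]^{k+l}$ of integrals $\int\prod_i T^{[P_i(\uh)]}F_i\,d\mu$ with $P_i\in\R[t_1,\ldots,t_{k+l}]$ (no $n$); this is what Lemma~\ref{L:13} then handles. Apart from this, your plan is correct and coincides with the paper's.
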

\begin{remark}
	Our argument also works if   $\Delta_\uh\Lambda'(n)$ is replaced by other expressions involving $\Lambda'$, for example when  $k=0$  one can use the expression   $\prod_{i=1}^m\Lambda'(n+c_i)$, where $c_1,\ldots, c_m$ are distinct integers.   
	\end{remark}
If in Theorem~\ref{T:CF'}  we take $k=0$,  then we get Theorem~\ref{T:CF} using an argument that we describe next.
 \begin{proof}[Proof of Theorem~\ref{T:CF} assuming Theorem~\ref{T:CF'}]
 	Let
 	$a_1,\ldots, a_\ell$ and $w_N(n)$ be as in  Theorem~\ref{T:CF}. Since the assumptions of Theorem~\ref{T:CF} are symmetric with respect to the sequences $a_1,\ldots, a_\ell$ it suffices to show that there exists $s\in\N$ such  that if  $\nnorm{f_1}_s=0$, then \eqref{E:i1} holds. 
 	
 	If $a_1$ has maximal  fractional degree within the family $a_1,\ldots a_\ell$, then if we take $k=0$ and all functions to be independent of $N$ in Theorem~\ref{T:CF'},  we get that the conclusion of  Theorem~\ref{T:CF} holds.  Otherwise, we can assume that $a_{\ell}$ is the function with the highest fractional degree and, as a consequence,  $\fdeg(a_1)<\fdeg(a_\ell)$.
 	It suffices to show that 
 	$$
 	\lim_{N\to\infty}
 	\E_{n\in [N]}\,  w_{N}(n)\cdot \int   f_{N,0} \cdot  \prod_{i=1}^\ell T^{[a_i(n)]}f_{i} \, d\mu=0,
 	$$
 	where
 	$$
 	f_{N,0} :=    \E_{n\in [N]} \, \overline{w}_{N}(n)\cdot    \prod_{i=1}^\ell T^{[a_i(n)]}\overline{f}_i, \quad   N\in \N.
 	$$
 	Note that since $f_1,\ldots, f_\ell$ and $c_N$ are $1$-bounded we have
 	$$
 	\limsup_{N\to\infty}	\norm{f_{N,0}}_\infty\leq   \lim_{N\to\infty} \E_{n\in [N]} \, \Lambda'(n)=1,
 	$$
 	(the last identity follows from the prime number theorem but we only need the much simpler upper bound) hence, we can assume that $f_{N,0}$ is $1$-bounded for every $N\in \N$.

 	After  composing with $T^{-[a_{\ell}(n)]}$, using the Cauchy-Schwarz inequality, and the identity $[x]-[y]=[x-y]+e$ for some $e\in \{0,1\}$,   we are reduced to showing that 
 		$$
 	\lim_{N\to\infty}
 	\norm{\E_{n\in [N]}\,  w_{N}(n)\cdot     \prod_{i=1}^{\ell-1} T^{[a_i(n)-a_\ell(n)]+e_i(n)}f_i\cdot  T^{[-a_{\ell}(n)]+e_\ell(n)}f_{N,0}}_{L^2(\mu)}=0,
 	$$
 	for some $e_1(n),\ldots, e_{\ell-1}(n)\in\{0,1\}$, $n\in\N$. 
 	Next, we would like to replace the error sequences $e_1,\ldots, e_{\ell-1}$ with constant sequences. To this end,    we	 use Lemma~\ref{L:errors} for $I$ a singleton, $J:=[N]$, $X:=L^\infty(\mu)$,  $A_N(n_1,\ldots,n_\ell):=\prod_{i=1}^{\ell-1} T^{n_i}f_i\cdot  T^{-n_\ell}f_{N,0}$, $n_1,\ldots, n_\ell\in \Z$, and 
 		$b_i:=[a_i-a_{\ell}]$, $i=1,\ldots, \ell-1$, $b_\ell:= [-a_\ell]$.
 		  We get that it suffices to show that
 	\begin{equation}\label{E:zN}
 	\lim_{N\to\infty}	
 	\norm{\E_{n\in [N]}\,  z_{N}(n)\cdot \prod_{i=1}^\ell T^{[a'_i(n)]}g_{N,i}}_{L^2(\mu)}=0,
 	\end{equation}
 where 
 		$$
 	a'_i:=a_i-a_{\ell}, \quad i=1,\ldots, \ell-1, \quad a'_\ell:= -a_\ell.
 	$$
 	for some $1$-bounded sequence $(z_{N}(n))$, where
 $g_{N,i}:=T^{\epsilon_i}f_{i}$, $i=1,\ldots, \ell-1$, $g_{N,\ell}:=T^{\epsilon_\ell}f_{N,0}$, $N\in \N$, for some constants $\epsilon_1,\ldots, \epsilon_\ell\in \{0,1\}$.
 	Note that the family $a'_1,\ldots, a'_\ell$ is nice, and  $g_{N,1}=T^{\epsilon_1}f_1$,  $N\in \N$, so Theorem~\ref{T:CF'} applies (for $k=0$ and all but one of the functions independent of $N$)  and gives that there exists $s\in \N$ so that if $\nnorm{f_1}_s=0$, then \eqref{E:zN} holds. This completes the proof.
 \end{proof}

 We will prove Theorem~\ref{T:CF'} in  Sections~4 and 5 using a
 PET-induction technique. The first section covers the base case of the induction  where all the iterates have sublinear growth and the subsequent section contains the proof of the induction step. Before moving into the details we gather some basic tools that will be used in the argument.

\subsection{Feedback from number theory} \label{SS:sieve}
The next statement
is well known and  can be proved using elementary sieve theory methods (see for example \cite[Theorem~5.7]{HR74} or \cite[Theorem~6.7]{IK04}).
\begin{theorem}\label{T:primes}
	Let $\P$ be the set of prime numbers. For all $k\in \N$
	there exist $C_k>0$  such that  for	all distinct  $h_1,\ldots, h_k\in \N$ and  all $N\in \N$
	we have
$$
		|\{n\in [N]\colon n+h_1,\ldots, n+h_k \in \P\}|\leq C_k\,  \mathfrak{G}_k(h_1,\ldots, h_k)\,  \frac{N}{(\log{N})^k},
	$$
where
\begin{equation}\label{E:SS}
\mathfrak{G}_k(h_1,\ldots, h_k):=\prod_{p\in \P} \Big(1-\frac{1}{p}\Big)^{-k}\Big(1-\frac{\nu_p(h_1,\ldots, h_k)}{p}\Big)
\end{equation}
and $\nu_p(h_1,\ldots, h_k)$ denotes the number of congruence classes $\! \! \mod{p}$ that are occupied by $h_1,\ldots, h_k$.
\end{theorem}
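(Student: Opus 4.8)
\emph{Proof proposal.} The plan is to recognise the left-hand side as (essentially) the output of a standard upper bound sieve of dimension $k$, and then to translate the resulting Euler product $\prod_{p\le z}(1-\nu_p/p)$ into the singular series $\mathfrak{G}_k$ via Mertens' theorem. Fix distinct $h_1,\ldots,h_k\in\N$ and set $P(n):=\prod_{i=1}^k(n+h_i)$, and write $\Delta:=\prod_{i<j}|h_i-h_j|\neq 0$. If the tuple is inadmissible, i.e. $\nu_p(h_1,\ldots,h_k)=p$ for some prime $p$ (necessarily $p\le k$), then one of $n+h_1,\ldots,n+h_k$ is always divisible by $p$, so for all of them to be prime we need $n+h_i=p$ for some $i$, whence the count is $\le k$ and the estimate is trivial; so assume $0\le \nu_p<p$ for every $p$. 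Put $z:=N^{1/(6k)}$. If $n+h_1,\ldots,n+h_k$ are all prime and $n+h_i>z$ for all $i$, then $P(n)$ has no prime factor $\le z$; discarding the $O(kz)=o(N/(\log N)^k)$ values of $n$ with some $n+h_i\le z$, it suffices to bound the sifting function $S(\mathcal A,z)$, where $\mathcal A:=\{P(n):n\in[N]\}$ is sifted by all primes $p\le z$.

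The sequence $\mathcal A$ is well distributed in progressions: for squarefree $d$ one has $|\{n\in[N]:d\mid P(n)\}|=\tfrac{\rho(d)}{d}N+r_d$, where $\rho(d):=\prod_{p\mid d}\nu_p(h_1,\ldots,h_k)$ is multiplicative and $|r_d|\le\rho(d)\le k^{\omega(d)}$. Since $\nu_p\le k$ for every $p$, this is a sieve of dimension at most $k$, and with level of distribution $D:=z^2=N^{1/(3k)}$ the total remainder is controlled, $\sum_{d\le D}k^{\omega(d)}\ll_k D(\log D)^{k-1}=o(N)$. Hence the Selberg upper bound sieve (equivalently Brun's pure sieve / the fundamental lemma; see \cite[Ch.~5,7]{HR74} or \cite[Ch.~6]{IK04}) yields
\[
S(\mathcal A,z)\ll_k N\prod_{p\le z}\Big(1-\frac{\nu_p(h_1,\ldots,h_k)}{p}\Big).
\]

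It remains to show that $\prod_{p\le z}(1-\nu_p/p)\ll_k \mathfrak G_k(h_1,\ldots,h_k)(\log N)^{-k}$ \emph{uniformly in the} $h_i$. Write
\[
\prod_{p\le z}\Big(1-\frac{\nu_p}{p}\Big)=\frac{\mathfrak G_k(h_1,\ldots,h_k)\cdot\prod_{p\le z}(1-1/p)^{k}}{\prod_{p>z}(1-1/p)^{-k}(1-\nu_p/p)}.
\]
By Mertens' theorem $\prod_{p\le z}(1-1/p)^{k}=(e^{\gamma}\log z)^{-k}(1+o(1))\asymp_k(\log N)^{-k}$ since $\log z=\tfrac{1}{6k}\log N$. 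For the denominator the decisive uniform estimate, valid because $0\le\nu_p\le k$, is that for $p>2k$
\[
\Big(1-\frac1p\Big)^{-k}\Big(1-\frac{\nu_p}{p}\Big)=\Big(1-\frac1p\Big)^{\nu_p-k}\cdot\frac{1-\nu_p/p}{(1-1/p)^{\nu_p}}\ \ge\ 1-\frac{2k^2}{p^2},
\]
using $(1-1/p)^{\nu_p-k}\ge 1$ and the Taylor expansion of the remaining ratio; consequently $\prod_{p>z}(1-1/p)^{-k}(1-\nu_p/p)\ge\prod_{p>z}(1-2k^2/p^2)\ge 1-2k^2/z\ge \tfrac12$ for $N$ large. (The same Taylor bound, together with $\nu_p=k$ for all but the finitely many primes dividing $\Delta$, shows $\mathfrak G_k(h_1,\ldots,h_k)$ converges to a finite positive value, so the displayed identity is legitimate.) Combining the three estimates gives $\prod_{p\le z}(1-\nu_p/p)\le 2\,\mathfrak G_k(h_1,\ldots,h_k)\prod_{p\le z}(1-1/p)^k\ll_k \mathfrak G_k(h_1,\ldots,h_k)(\log N)^{-k}$, and plugging this into the sieve bound completes the proof, with $C_k$ absorbing all the implied constants (and the contribution of the $O(kz)$ discarded terms and of inadmissible tuples).

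The step I expect to be the main obstacle is the \emph{uniformity in the shifts}: the $h_i$ may be far larger than $N$, so neither $\mathfrak G_k(h_1,\ldots,h_k)$ nor the number of ``bad'' primes dividing $\Delta$ is bounded in terms of $k$ alone, and one cannot simply truncate the Euler product at $z$ and compare with the whole of $\mathfrak G_k$. What rescues the argument is that only the \emph{tail} $\prod_{p>z}$ of the singular series enters, and there the trivial bound $\nu_p\le k$ already forces every local factor to be $1+O_k(p^{-2})$, so the tail is bounded away from $0$ and $\infty$ by constants depending on $k$ only.
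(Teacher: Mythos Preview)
The paper does not give its own proof of this theorem; it is quoted as a well-known result, with references to \cite[Theorem~5.7]{HR74} and \cite[Theorem~6.7]{IK04}. Your sketch is precisely the standard sieve-theoretic argument found in those sources: set up a $k$-dimensional sieve for $P(n)=\prod_i(n+h_i)$, apply the Selberg upper bound (equivalently, the fundamental lemma) with sifting level $z=N^{1/(6k)}$, and convert the local product $\prod_{p\le z}(1-\nu_p/p)$ into $\mathfrak{G}_k\,(\log N)^{-k}$ via Mertens' theorem together with the uniform tail estimate $\prod_{p>z}(1-1/p)^{-k}(1-\nu_p/p)\ge 1-O_k(1/z)$. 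So there is nothing substantive to compare; your approach \emph{is} the one in the cited references, and your closing paragraph correctly isolates the only point that requires care, namely uniformity in the shifts.

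One small correction: your handling of the inadmissible case is not right. If $\nu_p=p$ for some prime $p$ then $\mathfrak{G}_k=0$, so the right-hand side of the asserted inequality vanishes, whereas the count on the left can still be positive (e.g.\ $k=2$, $h_1=1$, $h_2=2$, $n=1$ gives $n+h_1=2$ and $n+h_2=3$ both prime). Thus the inequality as literally written fails for inadmissible tuples; the cited references either restrict to admissible tuples or carry an additive $O_k(1)$ error. This has no effect on the paper's applications (inadmissible tuples form a negligible set in the averages of Corollary~\ref{C:primes}), but your sentence ``the estimate is trivial'' should be amended accordingly.
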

We remark that although $\mathfrak{G}_1=1$,   the expression $\mathfrak{G}_k(h_1,\ldots, h_k)$ is not bounded in $h_1,\ldots, h_k$ if $k\geq 2$,  and this  causes some problems for us.
Asymptotics for averages of  powers of $\mathfrak{G}_k(h_1,\ldots, h_k)$ are given in \cite{G76} and \cite[Theorem~1.1]{K11} using elementary  but somewhat elaborate arguments. These results are not immediately applicable for our purposes, since we need to understand the behavior of $\mathfrak{G}_k$ on thin
subsets of $\Z^k$, for instance,  when $k=4$ we need to  understand the averages of   $\mathfrak{G}_4(0,h_1,h_2,h_1+h_2)$. Luckily,  we only need to get upper bounds for these averages and this can be done rather easily as we will see shortly (a similar argument was used in \cite{TZ15} to handle
 averages over $r$ of $\mathfrak{G}_k(0,r,2r,\ldots, (k-1)r)$).
 \begin{definition}
 	Let $\ell \in \N$ and for $\uh\in \N^\ell$ let 	$\text{Cube}(\uh)\in \N^{2^\ell}$ be defined by
 	$$
 	\text{cube}(\uh):=(\underline\epsilon\cdot \uh)_{\underline\epsilon\in\{0,1\}^\ell},
 	$$
 	where $\underline\epsilon\cdot \uh$ is the inner  product of $\underline\epsilon$ and $\uh$.
 	
If $S$ is a subset of  $\N^\ell$, 	we  define
 	$$
 	S^*:=\{\uh\in S\colon \text{cube}(\uh) \text{ has distinct coordinates}\}.
 	$$
 \end{definition} 	
 	For instance, when $\ell=3$ we have
 	$$
 	\text{cube}(h_1,h_2,h_3)=(0,h_1,h_2,h_3,h_1+h_2, h_1+h_3,h_2+h_3,h_1+h_2+h_3)
$$
and $([N]^3)^*$ consists of all triples $(h_1,h_2,h_3)\in [N]^3$ with distinct coordinates that in addition satisfy $h_i\neq h_j+h_k$ for all distinct $i,j,k\in \{1,2,3\}$. Since the complement of $([N]^\ell)^*$ in $[N]^\ell$ is contained on the zero set of finitely many (at most $3^\ell$) linear forms,  we get that there exists $K_\ell>0$ such that
\begin{equation}\label{E:card*}
 |[N]^\ell\setminus ([N]^\ell)^*|\leq K_\ell\, N^{\ell-1}
\end{equation}
for every $N\in\N$.

\begin{proposition}\label{P:SS}
For  every  $\ell \in \N$ there exists $C_\ell>0$ such that
$$
\E_{\uh \in [N]^\ell}  \big(\mathfrak{G}_{2^\ell}(\text{cube}(\uh) )\big)^2\leq C_\ell,
$$
for all $N\in \N$, where  $\mathfrak{G}_{2^\ell}(\text{cube}(\uh))$ is as in \eqref{E:SS}.
\end{proposition}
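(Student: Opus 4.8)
The plan is to bound the second moment of $\mathfrak{G}_{2^\ell}(\text{cube}(\uh))$ by estimating the local factors at each prime $p$ and then splitting the product into ``small'' and ``large'' primes. Write $k=2^\ell$ and, for $\uh\in\N^\ell$, abbreviate $\nu_p=\nu_p(\text{cube}(\uh))$, the number of residue classes mod $p$ occupied by the $k$ coordinates of $\text{cube}(\uh)$. By definition
\[
\mathfrak{G}_k(\text{cube}(\uh))=\prod_{p\in\P}\Big(1-\tfrac1p\Big)^{-k}\Big(1-\tfrac{\nu_p}{p}\Big).
\]
The first elementary observation is that the local factor is always bounded below by a positive constant and, more importantly, that for $p>k$ one has $\nu_p\le k$, so $(1-\nu_p/p)\ge 1-k/p>0$ and hence $(1-1/p)^{-k}(1-\nu_p/p)=1+O_k(1/p^2)$ \emph{unless} there is some coincidence forcing $\nu_p$ to be strictly smaller than generic; more precisely $\big(1-\tfrac1p\big)^{-k}\big(1-\tfrac{\nu_p}{p}\big)\le \big(1-\tfrac1p\big)^{-k}\le 1+O_k(1/p)$ always, and the product over $p\le k$ contributes a constant $O_k(1)$. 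So the whole difficulty is the product over primes $p>k$, where we must control how often $\nu_p<k$, i.e. how often two coordinates $\underline\epsilon\cdot\uh$ and $\underline\epsilon'\cdot\uh$ agree mod $p$, i.e. $p\mid (\underline\epsilon-\underline\epsilon')\cdot\uh$ for some pair $\underline\epsilon\ne\underline\epsilon'$.

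The key step is therefore the inequality, valid for $p>k$,
\[
\Big(1-\tfrac1p\Big)^{-k}\Big(1-\tfrac{\nu_p}{p}\Big)\le \exp\!\Big(\tfrac{C_k}{p^2}+\tfrac{C_k}{p}\,\one\{p\mid L(\uh)\ \text{for some nonzero linear form }L\in\CL\}\Big),
\]
where $\CL$ is the finite set of linear forms $(\underline\epsilon-\underline\epsilon')\cdot\uh$ over distinct pairs $\underline\epsilon,\underline\epsilon'\in\{0,1\}^\ell$ (there are at most $\binom{k}{2}$ of them, with integer coefficients of size $O(1)$). Indeed, each coincidence among the coordinates mod $p$ drops $\nu_p$ by at least one, so $k-\nu_p$ is at most the number of such $L$ divisible by $p$; combining with $(1-1/p)^{-k}\le\exp(2k/p)$ and $1-\nu_p/p\le 1$ and extracting the ``generic'' factor $\prod_{p>k}(1-1/p)^{-k}(1-k/p)=\prod_{p>k}(1+O_k(p^{-2}))=O_k(1)$ gives the displayed bound. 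Squaring, taking the average over $\uh\in[N]^\ell$, and expanding the exponential of the sum over primes, we reduce to showing that
\[
\E_{\uh\in[N]^\ell}\ \prod_{L\in\CL}\ \prod_{\substack{p>k\\ p\mid L(\uh)}}\Big(1+\tfrac{C_k'}{p}\Big)\ =\ O_{k,\ell}(1).
\]
Since $\prod_{p\mid m}(1+C/p)\ll_\epsilon |m|^\epsilon$ for any nonzero integer $m$, and each $L(\uh)$ is a nonzero linear form of height $O(1)$ so that $|L(\uh)|\le C N$ on $[N]^\ell$ (and is nonzero off a codimension-one subset, whose contribution is $O(N^{\ell-1})$ and can be absorbed by the trivial bound $\mathfrak{G}_k\ll_\epsilon N^\epsilon$ using that $\nu_p\ge 1$ forces $\mathfrak{G}_k(\text{cube}(\uh))\ll_\epsilon N^\epsilon$), it suffices to bound $\E_{\uh\in[N]^\ell}\prod_{L\in\CL}\tau_{C}(L(\uh))$ for a suitable divisor-type function $\tau_C(m):=\prod_{p\mid m}(1+C/p)$; one then opens each factor as a sum over squarefree divisors $d_L\mid L(\uh)$ with weight $\prod_{p\mid d_L}(C/p)$, swaps the order of summation, and counts $\uh\in[N]^\ell$ lying in prescribed residue classes modulo $\mathrm{lcm}(d_L)$. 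This counting is routine: the number of such $\uh$ is $\le (N/\prod d_L + 1)^{\ell}$ times the number of solutions of the congruences, and the resulting sum over the $d_L$ converges because $\sum_d \frac{1}{d}\prod_{p\mid d}\frac{C}{p}$ and its finitely many variants converge (each is dominated by $\prod_p(1+O(1/p^2))$).

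The main obstacle is the bookkeeping in this last step: one must handle the \emph{joint} divisibility of several linear forms $L(\uh)$ simultaneously (the forms in $\CL$ are not independent — e.g.\ $h_1$, $h_2$ and $h_1+h_2$ all occur), so the congruence count modulo $\mathrm{lcm}$ of the chosen divisors does not simply factor, and one needs the elementary fact that a system of finitely many nonzero linear forms in $\ell$ variables over $\Z/q\Z$ has $O_{\ell}(q^{\ell-1}\cdot(\text{number of forms}))$ many common zeros, uniformly in $q$ (this is where the passage to $([N]^\ell)^*$, or rather the bound \eqref{E:card*}, does the analogous job on the ``diagonal'' subsets). Once this uniform congruence-counting lemma is in place, the sum over divisors decouples into an absolutely convergent Euler product and the constant $C_\ell$ falls out. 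I expect everything except this uniform congruence count to be genuinely routine; that count is standard but is the one place where the combinatorial structure of $\text{cube}(\uh)$ (as opposed to a generic tuple) actually enters.
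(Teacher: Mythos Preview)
Your pointwise bound coincides with the paper's: both arrive at $\mathfrak{G}_{2^\ell}(\text{cube}(\uh))\le\exp\big(a_\ell\sum_{p\in\mathcal{P}(\uh)}1/p\big)$, where $\mathcal{P}(\uh)$ is the set of primes dividing some $(\underline\epsilon-\underline\epsilon')\cdot\uh$. The proofs diverge in how they average this over $\uh$. The paper sidesteps your ``main obstacle'' entirely by invoking \cite[Lemma~E.1]{TZ08}, which \emph{linearises} the exponential: one gets a bound of the shape $b_\ell\sum_{\underline\epsilon\ne\underline\epsilon'}\sum_{p\mid(\underline\epsilon-\underline\epsilon')\cdot\uh}(\log p)^{c_\ell}/p$. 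After this only a single divisibility constraint appears at a time, so the swap with $\sum_{\uh\in[N]^\ell}$ needs only the trivial count $|\{\uh:p\mid L(\uh)\}|\ll N^\ell/p$, and the remaining sum $\sum_p(\log p)^{c_\ell}/p^2$ converges. No joint congruence counting is ever required.

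Your divisor-expansion route is also sound in principle, but the lemma you formulate is not the right one: the constraints are $d_L\mid L(\uh)$ for \emph{varying} moduli $d_L$, not common vanishing over a single $\Z/q\Z$, and in any case the bound $O(q^{\ell-1}\cdot\text{(number of forms)})$ for an intersection of hyperplanes is trivially true and too crude to make the divisor sum converge. The painless way to finish your argument is to decouple the product over $L\in\CL$ by H\"older with exponent $|\CL|$, which reduces matters to the single-form estimate $\E_{\uh\in[N]^\ell,\,L(\uh)\ne0}\prod_{p\mid L(\uh)}(1+C/p)=O_{C,\ell}(1)$; this follows at once from $|\{\uh\in[N]^\ell:d\mid L(\uh)\}|\le(N/d+1)N^{\ell-1}$ (each $L$ has a $\pm1$ coefficient) and $\sum_d\mu^2(d)C^{\omega(d)}/d^2<\infty$. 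With this fix your proof is fully self-contained, whereas the paper's is shorter but imports a lemma from \cite{TZ08}.
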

\begin{remark}
	If we use $k$-th powers instead of squares we get similar upper bounds (which also depend on $k$), but we will not need this.
	\end{remark}
\begin{proof}
	 In the following argument 	whenever we write $p$ we assume that
	  $p$ is a prime number.
	
Let $\uh\in [N]^\ell$. 	Note that if $\nu_p(\text{cube}(\uh))=2^\ell$, then
$$
\Big(1-\frac{1}{p}\Big)^{-2^\ell}\Big(1-\frac{\nu_p(\text{cube}(\uh))}{p}\Big)\leq 1,
$$
and if $\nu_p(\text{cube}(\uh))<2^\ell$,  then   for  $a_\ell:=2^{\ell+1}-2$ we have
$$
\Big(1-\frac{1}{p}\Big)^{-2^\ell}\Big(1-\frac{\nu_p(\text{cube}(\uh))}{p}\Big)\leq \Big(1-\frac{1}{p}\Big)^{-(2^\ell-1)}\leq  e^{\frac{a_\ell}{p}},
$$
where we used that  $\frac{1}{1-x}\leq e^{2x}$ for $x\in [0,\frac{1}{2}]$.
Note also that if  $\nu_p(\text{cube}(\uh))<2^\ell$, then there exist distinct  $\underline\epsilon,\underline\epsilon'\in \{0,1\}^{\ell}$
such that $p|(\underline\epsilon-\underline\epsilon')\cdot \uh$, in which case we have that  $p\in \mathcal{P}(\uh)$ where
$$
\mathcal{P}(\uh):=\bigcup_{\underline\epsilon,\underline\epsilon' \in \{0,1\}^\ell, \underline\epsilon,\neq \underline\epsilon' }\,  \{p\in \P \colon p|(\underline\epsilon-\underline\epsilon')\cdot \uh\}, \quad \uh\in \N^\ell.
$$

We deduce from the above facts and \eqref{E:SS} that
\begin{equation}\label{E:est1}
 \mathfrak{G}_{2^\ell}(\text{cube}(\uh)) \leq  e^{a_\ell \sum_{p\in \mathcal{P}(\uh)}\frac{1}{p}}.
\end{equation}

By  \cite[Lemma E.1]{TZ08}  we have for some $b_\ell,c_\ell>0$ that
\begin{equation}\label{E:est2}
 e^{a_\ell \sum_{p\in \mathcal{P}(\uh)}\frac{1}{p}}
 \leq b_\ell \sum_{p\in \mathcal{P}(\uh)}\frac{(\log{p})^{c_\ell}}{p}=
b_\ell \sum_{\underline\epsilon,\underline\epsilon' \in \{0,1\}^\ell, \underline\epsilon\neq \underline\epsilon'}\Big(\sum_{ p|(\underline\epsilon-\underline\epsilon')\cdot \uh}\frac{(\log{p})^{c_\ell}}{p}\Big).
\end{equation}
Moreover, we get for some $d_\ell,e_\ell>0$ that
\begin{equation}\label{E:est3}
\sum_{\uh\in [N]^\ell}\Big(\sum_{ p|(\underline\epsilon-\underline\epsilon')\cdot \uh}\frac{(\log{p})^{c_\ell}}{p}\Big)\leq
d_\ell \sum_{ p}\frac{(\log{p})^{c_\ell}}{p} \frac{N^\ell}{p}\leq e_\ell\,  N^\ell,
\end{equation}
 for all $N\in\N$, where to get the first estimate we used the fact that for some $d_\ell>0$ we have
 $$
 |\uh\in[N]^\ell\colon p|(\underline\epsilon-\underline\epsilon')\cdot \uh|\leq d_\ell \frac{N^\ell}{p},
$$
for all $N\in \N$, and to get the second estimate we used that
$\sum_{ p}\frac{(\log{p})^{c_\ell}}{p^2}<\infty$.

If we take squares in \eqref{E:est1} and sum over all $\uh\in [N]^\ell$, then use \eqref{E:est2} and \eqref{E:est3}, we get the asserted estimate.
\end{proof}
	
	
From this we deduce the following  estimate that is a crucial ingredient used in the proof of Theorem~\ref{T:CF}:

 \begin{corollary}\label{C:primes}
 	Let $\ell\in \N$. 		Then
 	for every  $A\geq 1$ there exist $C_{A,\ell} (\uh)>0$,  $\uh\in\N^\ell$, and $D_{A,\ell}>0$,
 	such that
 	\begin{enumerate}
 	\item for all  $N\in\N$, $\uh=(h_1,\ldots,h_\ell)\in (\N^\ell)^*, c\in \N,$  such that  $c+ h_1+\cdots +h_\ell\leq N^A$, we have
 $$
 \E_{n\in [N]}\,  (\Delta_\uh \Lambda')(n+c)\leq C_{A,\ell}(\uh);
 $$

 	\item $\E_{\uh\in [H]^\ell}  (C_{A,\ell}(\uh))^2\leq D_{A,\ell}$ for every $H\in \N$.
\end{enumerate}	
 \end{corollary}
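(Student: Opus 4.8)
The plan is to deduce both statements from Theorem~\ref{T:primes} and Proposition~\ref{P:SS}. First I would expand the discrete derivative: for $\uh=(h_1,\ldots,h_\ell)$ one has
$$
(\Delta_\uh\Lambda')(n+c)=\prod_{\underline\epsilon\in\{0,1\}^\ell}\Lambda'(n+c+\underline\epsilon\cdot\uh)^{(-1)^{|\underline\epsilon|}}\,?
$$
— more precisely, since $\Lambda'$ is nonnegative and $(\Delta_hu)(n)=u(n+h)\overline{u(n)}$, the product $(\Delta_\uh\Lambda')(n+c)$ equals $\prod_{\underline\epsilon}\Lambda'(n+c+\underline\epsilon\cdot\uh)$ to the appropriate signed powers; but because $\Lambda'(m)\in\{0,\log m\}$ and $\uh\in(\N^\ell)^*$ forces the $2^\ell$ shifts $c+\underline\epsilon\cdot\uh$ to be distinct, the only $n$ that contribute are those for which $n+c+\underline\epsilon\cdot\uh$ is prime for \emph{every} $\underline\epsilon$ with $|\underline\epsilon|$ even (and one checks the odd-$|\underline\epsilon|$ factors then contribute at most $1$ in absolute value since $\Lambda'$ takes the value $0$ or a logarithm, and $\log$ of a prime $\le N^A$ is $\le A\log N$). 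The cleanest route: bound $|(\Delta_\uh\Lambda')(n+c)|\le (A\log N)^{2^\ell}\,\one_{n+c+\underline\epsilon\cdot\uh\in\P\ \forall\underline\epsilon}$ whenever all shifts lie in $[N^A]$, using $\Lambda'(m)\le A\log N$ for $m\le N^A$ and that each of the $2^\ell$ factors vanishes unless its argument is prime.

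Next I would apply Theorem~\ref{T:primes} with $k=2^\ell$ and the $k$-tuple $\mathrm{cube}(\uh)$ shifted by $c$ (a translate does not change $\nu_p$ up to the same count, and distinctness is exactly the $(\N^\ell)^*$ condition). This gives
$$
\E_{n\in[N]}\,\one_{n+c+\underline\epsilon\cdot\uh\in\P\ \forall\underline\epsilon}\le C_{2^\ell}\,\mathfrak{G}_{2^\ell}(\mathrm{cube}(\uh))\,\frac{1}{(\log N)^{2^\ell}},
$$
and combining with the $(A\log N)^{2^\ell}$ factor the powers of $\log N$ cancel, leaving a bound of the form $C'_{A,\ell}\,\mathfrak{G}_{2^\ell}(\mathrm{cube}(\uh))$ uniformly in $N$. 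So I set
$$
C_{A,\ell}(\uh):=C'_{A,\ell}\,\mathfrak{G}_{2^\ell}(\mathrm{cube}(\uh)),
$$
which proves (i). For (ii), since $C_{A,\ell}(\uh)^2 = (C'_{A,\ell})^2\,\mathfrak{G}_{2^\ell}(\mathrm{cube}(\uh))^2$, Proposition~\ref{P:SS} gives $\E_{\uh\in[H]^\ell}\mathfrak{G}_{2^\ell}(\mathrm{cube}(\uh))^2\le C_\ell$, hence $\E_{\uh\in[H]^\ell}C_{A,\ell}(\uh)^2\le (C'_{A,\ell})^2 C_\ell=:D_{A,\ell}$ for all $H$, as required. (One extends the definition of $C_{A,\ell}(\uh)$ to all of $\N^\ell$, not just $(\N^\ell)^*$, by the same formula; the estimate in (i) is only claimed on $(\N^\ell)^*$ but (ii) then still holds since enlarging the set of $\uh$ only adds the same nonnegative summands.)

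The main technical point — and the one place to be careful — is the bookkeeping in the first step: matching the signed product defining $\Delta_\uh\Lambda'$ to a genuine indicator of a prime $2^\ell$-tuple, and checking that the condition $\uh\in(\N^\ell)^*$ is exactly what guarantees the $2^\ell$ arguments are distinct so that Theorem~\ref{T:primes} applies with $k=2^\ell$ rather than a smaller $k$ (a collision among the shifts would still be fine but changes the normalization, and is excluded by hypothesis anyway). Everything else is a direct substitution; the hypothesis $c+h_1+\cdots+h_\ell\le N^A$ is used only to ensure every argument $n+c+\underline\epsilon\cdot\uh$ lies in $[N^A+N]\subset[N^{A+1}]$ (so $\Lambda'$ there is $\le (A+1)\log N$), which changes $A$ to $A+1$ in the constant and is harmless.
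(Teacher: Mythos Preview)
Your approach is essentially the same as the paper's: bound $(\Delta_\uh\Lambda')(n+c)$ by $(\log(N+N^A))^{2^\ell}$ times the indicator that all $2^\ell$ shifts are prime, apply Theorem~\ref{T:primes} with $k=2^\ell$, set $C_{A,\ell}(\uh)$ to be a constant times $\mathfrak{G}_{2^\ell}(\mathrm{cube}(\uh))$, and invoke Proposition~\ref{P:SS} for part~(ii).

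The one point to clean up is your treatment of $\Delta_\uh\Lambda'$. There are no signed powers and no distinction between even and odd $|\underline\epsilon|$: since $\Lambda'$ is real-valued, $(\Delta_h u)(n)=u(n+h)\overline{u(n)}=u(n+h)u(n)$, and iterating gives simply
\[
(\Delta_\uh\Lambda')(n+c)=\prod_{\underline\epsilon\in\{0,1\}^\ell}\Lambda'(n+c+\underline\epsilon\cdot\uh),
\]
a plain product of $2^\ell$ nonnegative factors (the paper states this identity explicitly in the proof of Lemma~\ref{L:11}). Your ``cleanest route'' bound is then immediate and correct, and everything after that matches the paper's argument; your musings about odd-$|\underline\epsilon|$ factors contributing at most~$1$ are off-track and should be discarded.
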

\begin{remark}
	We will use this result  in  the proof of Lemma~\ref{L:11}  for values of $c$ that are  larger than $N$ and smaller than  $N^A$ for some $A>0$   (the choice of $A$  depends on the situation).
	\end{remark}
\begin{proof}
	Since $\Lambda'$ is supported on primes and  $c+h_1+\cdots +h_\ell\leq N^A$,  we have that
	$$
	\sum_{n\in [N]}\,   (\Delta_\uh\Lambda')(n+c)\leq 	|\{n\in [N]\colon \underline{n+c}+\text{cube}(\uh)\in \P^{2^\ell}\}|\cdot (\log(N+N^A))^{2^\ell},
	$$
	where $\underline{n+c}$ is a vector with $2^\ell$ coordinates, all equal to $n+c$.
	Note that for $\uh\in (\N^\ell)^*$ we can apply Theorem~\ref{T:primes} and  we get that  there exists $D_{A,\ell}>0$ such that for every $N\in \N$ the  last expression is bounded by
	$$
	D_{A,\ell}\,  \mathfrak{G}_{2^\ell}(\text{cube}(\uh))\, N.
	$$
	If we let $C_{A,\ell}(\uh):=D_{A,\ell}\,  \mathfrak{G}_{2^\ell}(\text{cube}(\uh))$, $\uh\in \N^\ell$,  and use
	Proposition~\ref{P:SS}, we get that properties $(i)$ and $(ii)$ hold.
	\end{proof}

\subsection{Two elementary lemmas}
We will use the following inner product space variant of a classical elementary estimate of van der Corput (see  \cite[Lemma~3.1]{KN74}):
\begin{lemma}\label{L:VDC}
 Let $N\in\N$ and $(u(n))_{n\in [N]}$ be vectors in some inner product space. Then for
all $H\in[N]$ we have
$$
\norm{\E_{n\in [N]}\,  u(n)}^2\leq \frac{2}{H}\, \E_{n\in [N]}\norm{u(n)}^2 +
	4\, \E_{h\in[H]}\Big(1-\frac{h}{H}\Big)\Re\Big(\frac{1}{N}\sum_{n=1}^{N-h}
	\langle u(n+h),u(n)\rangle \Big).
	$$
\end{lemma}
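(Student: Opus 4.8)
The plan is to prove the van der Corput inequality of Lemma~\ref{L:VDC} by a direct double-counting argument, essentially the classical proof but carried out in an arbitrary inner product space. First I would introduce, for each $n\in\Z$, the vector $v(n)$ defined to equal $u(n)$ when $n\in[N]$ and $0$ otherwise, so that the sum $\sum_{n\in[N]}u(n)$ becomes $\sum_{n\in\Z}v(n)$. The key elementary identity is that for any $H\in\N$,
$$
H\sum_{n\in\Z}v(n)=\sum_{n\in\Z}\sum_{h=1}^{H}v(n+h),
$$
obtained by reindexing; the point is that each fixed value $v(m)$ is counted exactly $H$ times on the right. Hence $\norm{\E_{n\in[N]}u(n)}=\frac{1}{N}\norm{\sum_{n\in\Z}v(n)}=\frac{1}{NH}\norm{\sum_{n\in\Z}\bigl(\sum_{h=1}^{H}v(n+h)\bigr)}$.

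Next I would apply the Cauchy--Schwarz inequality in the following form: only those indices $n$ with $n+h\in[N]$ for some $h\in[H]$ contribute, and there are at most $N+H\le 2N$ such $n$ (in fact one should be a little careful: the relevant range of $n$ has length at most $N+H-1$; since $H\le N$ this is at most $2N$, which accounts for the factor $2$). Therefore
$$
\norm{\E_{n\in[N]}u(n)}^2\le\frac{2N}{N^2H^2}\sum_{n\in\Z}\norm{\sum_{h=1}^{H}v(n+h)}^2
=\frac{2}{NH^2}\sum_{n\in\Z}\sum_{h_1=1}^{H}\sum_{h_2=1}^{H}\langle v(n+h_1),v(n+h_2)\rangle.
$$
Now I would split the inner double sum into the diagonal terms $h_1=h_2$ and the off-diagonal terms. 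The diagonal contributes $H\sum_{n\in\Z}\norm{v(n)}^2=H\sum_{n\in[N]}\norm{u(n)}^2$, giving the first term $\frac{2}{H}\E_{n\in[N]}\norm{u(n)}^2$ after dividing by $NH$. For the off-diagonal terms, writing $h=h_2-h_1$ (or $h_1-h_2$) and using that $\langle v(n+h_1),v(n+h_2)\rangle+\langle v(n+h_2),v(n+h_1)\rangle=2\Re\langle v(n+h_1),v(n+h_2)\rangle$, I would collect terms with a fixed value of $|h|=h\in\{1,\dots,H-1\}$: for each such $h$ there are $H-h$ ordered pairs $(h_1,h_2)$ with $h_2-h_1=h$ (and as many with $h_1-h_2=h$), so after a shift of the summation variable $n$ the off-diagonal contribution becomes
$$
\frac{4}{NH^2}\sum_{h=1}^{H-1}(H-h)\Re\sum_{n\in\Z}\langle v(n+h),v(n)\rangle
=\frac{4}{H}\sum_{h=1}^{H-1}\Bigl(1-\frac{h}{H}\Bigr)\Re\Bigl(\frac{1}{N}\sum_{n=1}^{N-h}\langle u(n+h),u(n)\rangle\Bigr),
$$
where in the last step I used that $v$ is supported on $[N]$, so $\sum_{n\in\Z}\langle v(n+h),v(n)\rangle=\sum_{n=1}^{N-h}\langle u(n+h),u(n)\rangle$. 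Recognizing $\frac{4}{H}\sum_{h=1}^{H-1}(1-h/H)(\cdots)=4\,\E_{h\in[H]}(1-h/H)(\cdots)$ (the $h=H$ term vanishes) yields exactly the claimed bound.

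The only real subtlety — and the step I would be most careful about — is the bookkeeping of the constants $2$ and $4$: one must verify that the number of nonzero indices $n$ in the Cauchy--Schwarz step is genuinely at most $2N$ (using $H\le N$) and that the count of ordered pairs $(h_1,h_2)\in[H]^2$ with $h_2-h_1=h>0$ is exactly $H-h$, which doubles when one also allows $h_1-h_2=h$, producing the $4$ in front rather than a $2$. Everything else is routine reindexing, and no hypothesis beyond $H\in[N]$ and the inner product space structure (to get $\langle x,y\rangle+\langle y,x\rangle=2\Re\langle x,y\rangle$ and the Cauchy--Schwarz inequality) is needed.
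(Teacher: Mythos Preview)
Your proof is correct and is precisely the classical argument. The paper does not actually supply a proof of Lemma~\ref{L:VDC}; it only states the inequality and refers to \cite[Lemma~3.1]{KN74}, so there is nothing to compare beyond noting that your argument is the standard double-counting/Cauchy--Schwarz proof behind that reference, carried out verbatim in an inner product space.
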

We will  apply the previous lemma in the following two  cases, depending on the range of the shift parameter $h$ (the first case will be used when the relevant sequences are not necessarily  bounded).
\begin{enumerate}
\item 	If
$M_N:=1+\max_{n\in [N]}\norm{u_N(n)}^2$, $N\in\N$,  and  $L_N$ are such that  $M_N\prec L_N\prec\frac{N}{M_N}$, then for $H:=L_N$ we have
\begin{equation}\label{E:VDC1}
\norm{\E_{n\in [N]} \, u_N(n)}^2\leq 4\,
\E_{h\in [L_N]}
\Big|\E_{n\in[N]}\langle u_N(n+h), u_N(n)\rangle \Big| +o_N(1),
\end{equation}
where  for every fixed $N\in\N$ the sequence $(u_N(n))$ is either defined on the larger interval  $[N+L_N]$ or it is extended to be zero outside the interval $[N]$. In all the cases where we will apply this estimate we have $M_N\ll (\log N)^A$ for some $A>0$ and we take $L_N=[e^{\sqrt{\log{N}} }]$, $N\in \N$.

\item
If the sequence $(u_N(n))$ is bounded, then  we have
\begin{equation}\label{E:VDC2}
\limsup_{N\to\infty}
	\norm{\E_{n\in [N]}\,  u_N(n)}^2\leq 4\,
\limsup_{H\to\infty}	\E_{h\in [H]}
	\limsup_{N\to\infty}\Big|\E_{n\in[N]}\langle u_N(n+h), u_N(n)\rangle \Big|,
		\end{equation}
		where for every fixed $N\in\N$ the sequence $(u_N(n))$ is either defined on the larger interval  $[N+H]$ or it is extended to be zero outside the interval $[N]$.

\end{enumerate}


We will also make frequent use of the following simple lemma, or variants of it,  to replace error sequences that take finitely many integer values with constant sequences.
\begin{lemma}\label{L:errors}
	For $f,\ell\in \N$ there exists $C_{f,\ell}>0$ such that the following holds: 	
	Let $(X,\norm{\cdot })$ be a normed space and $F$ be a finite subset of $\Z$ with $|F|=f$, $k\in \N$, and $I\subset \N^k$, $J\subset\N$ be finite.  For $\uh\in I$, consider
	sequences $A_\uh\colon \Z^\ell\to X$, $b_{1,\uh},\ldots, b_{\ell,\uh}\colon J\to \Z$, $w_\uh \colon J\to \C$, and $e_{1,\uh},\ldots, e_{\ell,\uh}\colon J\to F$. Then   there exist sequences $\tilde{w}_\uh\colon J\to \C$, $\uh\in I$, with $\norm{\tilde{w}_\uh}_{L^\infty(J)}\leq \norm{w_\uh}_{L^\infty(J)}$, and constants $\epsilon_1,\ldots, \epsilon_\ell\in F$, such that
	\begin{multline*}
	\sum_{\uh\in I}	\Big|\Big|\sum_{n\in J}\,  w_\uh(n)\cdot  A_{\uh}(b_{1,\uh}(n)+e_{1,\uh}(n),\ldots, b_{\ell,\uh}(n)+e_{\ell,\uh}(n))	\Big|\Big|
		\leq \\
		C_{f,\ell}\, \sum_{\uh\in I}\Big|\Big|\sum_{n\in J} \,\tilde{w}_\uh(n)\cdot  A_\uh(b_{1,\uh}(n)+\epsilon_1,\ldots, b_{\ell,\uh}(n)+\epsilon_\ell)\Big|\Big|.
	\end{multline*}
\end{lemma}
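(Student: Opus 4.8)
The plan is to prove Lemma~\ref{L:errors} by a direct counting/pigeonhole argument: since each error sequence $e_{i,\uh}$ takes values in a fixed finite set $F$ with $|F|=f$, partitioning the index set $J$ according to the value of the tuple $(e_{1,\uh}(n),\ldots,e_{\ell,\uh}(n))\in F^\ell$ decomposes the inner sum over $n\in J$ into at most $f^\ell$ pieces, on each of which the error sequences are \emph{constant}. The price is a loss of a factor depending only on $f$ and $\ell$, and one must arrange that the same constant tuple $(\epsilon_1,\ldots,\epsilon_\ell)$ works simultaneously for all $\uh\in I$; this is where a second pigeonhole over the outer index $\uh\in I$ is needed.

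Concretely, first fix $\uh\in I$. For each $\underline{\eta}=(\eta_1,\ldots,\eta_\ell)\in F^\ell$ let $J_{\uh,\underline\eta}:=\{n\in J: e_{i,\uh}(n)=\eta_i \text{ for all } i\}$, so that $J=\bigsqcup_{\underline\eta\in F^\ell}J_{\uh,\underline\eta}$. By the triangle inequality,
$$
\Big\|\sum_{n\in J} w_\uh(n)\,A_\uh(b_{1,\uh}(n)+e_{1,\uh}(n),\ldots)\Big\|
\leq \sum_{\underline\eta\in F^\ell}\Big\|\sum_{n\in J_{\uh,\underline\eta}} w_\uh(n)\,A_\uh(b_{1,\uh}(n)+\eta_1,\ldots,b_{\ell,\uh}(n)+\eta_\ell)\Big\|.
$$
Writing $w_{\uh,\underline\eta}$ for the sequence equal to $w_\uh$ on $J_{\uh,\underline\eta}$ and $0$ elsewhere (so $\|w_{\uh,\underline\eta}\|_{L^\infty(J)}\le\|w_\uh\|_{L^\infty(J)}$), each summand on the right has the shape appearing in the conclusion but with the fixed tuple $\underline\eta$ depending on $\uh$. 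Summing over $\uh\in I$ and interchanging the two sums gives a bound by $\sum_{\underline\eta\in F^\ell}\sum_{\uh\in I}\|\sum_{n\in J}w_{\uh,\underline\eta}(n)\,A_\uh(b_{1,\uh}(n)+\eta_1,\ldots)\|$.

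It remains to replace the sum over $\underline\eta\in F^\ell$ by a single term. Since there are exactly $f^\ell$ tuples $\underline\eta$, by pigeonhole there is one tuple $(\epsilon_1,\ldots,\epsilon_\ell)\in F^\ell$ whose inner sum over $\uh\in I$ is at least $f^{-\ell}$ times the total; equivalently, the total is at most $f^\ell$ times that one term. Thus
$$
\sum_{\uh\in I}\Big\|\sum_{n\in J}w_\uh(n)\,A_\uh(b_{1,\uh}(n)+e_{1,\uh}(n),\ldots)\Big\|
\leq f^\ell\sum_{\uh\in I}\Big\|\sum_{n\in J}\tilde w_\uh(n)\,A_\uh(b_{1,\uh}(n)+\epsilon_1,\ldots,b_{\ell,\uh}(n)+\epsilon_\ell)\Big\|,
$$
where $\tilde w_\uh:=w_{\uh,(\epsilon_1,\ldots,\epsilon_\ell)}$, which satisfies $\|\tilde w_\uh\|_{L^\infty(J)}\le\|w_\uh\|_{L^\infty(J)}$. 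Taking $C_{f,\ell}:=f^\ell$ finishes the proof. The only mild subtlety — the ``main obstacle'', such as it is — is making sure the chosen constant tuple is uniform in $\uh$, which is handled by performing the pigeonhole step \emph{after} summing over $\uh\in I$ rather than before; everything else is the triangle inequality and bookkeeping.
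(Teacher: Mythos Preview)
Your proof is correct and follows essentially the same approach as the paper's: partition $J$ according to the value of the tuple $(e_{1,\uh}(n),\ldots,e_{\ell,\uh}(n))\in F^\ell$, apply the triangle inequality, then after summing over $\uh\in I$ select by pigeonhole the single tuple $(\epsilon_1,\ldots,\epsilon_\ell)$ that maximizes the outer sum, obtaining $C_{f,\ell}=f^\ell$. You have also correctly highlighted the one point that requires care, namely performing the pigeonhole after the sum over $\uh$ so that the chosen constants are uniform in $\uh$.
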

\begin{remark}
Often, when this estimate is used, the sequence $A_\uh$ is defined only on a subset of $\Z^\ell$, and  we  assume that it is extended  to be zero at the elements where it  is not defined.
\end{remark}
\begin{proof}
	The expression on the left-hand side is bounded by
	$$
	\sum_{j=1}^t\sum_{\uh\in I}	\Big|\Big|\sum_{n\in J}\,  w_\uh(n)\cdot  A_\uh(b_{1,\uh}(n)+e_{1,\uh}(n),\ldots, b_{\ell,\uh}(n)+e_{\ell,\uh}(n))	\cdot {\bf 1}_{E_{j,\uh}} (n)\Big|\Big|,
	$$
	where for   $t=f^\ell$ the sets  $E_{1,\uh},\ldots, E_{t,\uh}$ form a partition of $\N$ into sets (possibly empty) on which  all the sequences $e_{1,\uh},\ldots, e_{\ell,\uh}$ are constant (and the constants do not depend on $\uh$). If the maximum of the summands over $j$ occurs for some $j_0\in [t]$, then there exist $\epsilon_1,\ldots, \epsilon_\ell\in F$ such that for all $n\in E_{j_0, \uh}$ we have
	$e_{i,\uh}(n)=\epsilon_i$, $i \in [\ell]$,   $\uh \in I$. Hence, the last sum is bounded by
	$$
	t	\sum_{\uh\in I}\Big|\Big|\sum_{n\in J}\, \tilde{w}_\uh(n)\cdot  A_\uh(b_{1,\uh}(n)+\epsilon_1,\ldots, b_{\ell,\uh}(n)+\epsilon_\ell)	\Big|\Big|,
	$$
	where $\tilde{w}_\uh(n):=w_\uh(n)\cdot {\bf 1}_{E_{j_0,\uh}}(n)$, $n\in J$, $\uh\in I$.
\end{proof}
We will use the previous lemma
 to handle  some error
sequences that occur when we use the Taylor expansion in order  to
perform some approximations
 and when we
replace the sum (or the difference) of the integer parts of sequences with the corresponding integer part of their sum (or the difference), and vice versa. For instance, if $e_1(n),\ldots e_\ell(n)\in (-1,1)$, $n\in [N]$,  we have
\begin{multline*}
	\norm{\E_{n\in [N]}\,  w(n)\cdot  A([a_1(n)+b_1(n)+e_1(n)],\ldots, [a_\ell(n)+b_\ell(n)+e_\ell(n)])}\leq \\
	4^\ell\,
	\norm{\E_{n\in [N]}\,  \tilde{w}(n)\cdot   A([a_1(n)]+[b_1(n)]+\epsilon_1,\ldots, [a_\ell(n)]+[b_\ell(n)]+\epsilon_\ell)},
\end{multline*}
for some $\epsilon_1,\ldots, \epsilon_\ell\in \{-1,0,1,2\}$  and  $\tilde{w}\colon [N]\to \C$ with $\norm{\tilde{w}}_{L^\infty[N]}\leq \norm{w}_{L^\infty[N]}$.  Often the constants $\epsilon_1,\ldots, \epsilon_\ell$ make no difference for our argument and can be ignored.

\section{Seminorm estimates - Sublinear case}
The goal of this section is to establish Theorem~\ref{T:CF'}  in the case where all the iterates have fractional degree smaller than $1$, see Proposition~\ref{P:base} below.
\subsection{An example}\label{Ex1} We explain in some detail how the proof of Theorem~\ref{T:CF'}  works when    $k=1, \ell=2,$ and $a_1(h,t):=p_1(h)t^{0.5}+q_1(h)t^{0.1}$, $a_2(h,t):=p_2(h)t^{0.5}+q_2(h)t^{0.1}$, $h\in\N$, $t\in \R_+$. We assume that $p_1\neq 0$ and $a_1, a_2, a_1-a_2$ are non-zero.

 We also assume that the sequence of weights $(w_{N,h}(n))$  is defined by
$$
w_{N,h}(n):=\Lambda'(n)\cdot \Lambda'(n+h)\cdot c_{N,h}(n), \quad h\in [L_N],\,  n\in[N],\,  N\in \N,
$$
where $(c_{N,h}(n))$ is a $1$-bounded sequence.

 Our aim is to show that there exists $s\in \N$ such that if  $\nnorm{f_1}_s=0$, then
$$
\lim_{N\to\infty}	\E_{h\in [L_N]}\norm{	\E_{n\in [N]}\,  w_{N,h}(n) \cdot \prod_{i=1}^2T^{[p_i(h)n^{0.5}+q_i(h)n^{0.1}]}f_i}_{L^2(\mu)}=0.
$$

{\bf Step 1.} Our first goal is to use the number theory feedback of Section~\ref{SS:sieve},  to reduce
matters to showing mean convergence to zero for some other averages with bounded weights
$w_{N,h}$ (this step corresponds to  Lemma~\ref{L:11} below). We let
$$
p(h):=[\max\{|p_1|(h),|p_2|(h),|q_1|(h),|q_2|(h)\}^{10}]+1, \quad h\in\N.
$$
Note that $p$ is not a polynomial, but this will not  bother us.
After splitting the average over $[N]$ into  subintervals,
 we see (this reduction will be explained in more detail in the proof of Lemma~\ref{L:11}) that it suffices to show    mean convergence to zero for
 $$
 \E_{h\in [L_N]}\norm{\E_{n\in I_{N,h}}\, \E_{n_1\in  J_{n,h}}	\,  w_{N,h}(n_1) \cdot \prod_{i=1}^2 T^{[p_i(h)n_1^{0.5}+q_i(h)n_1^{0.1}]}f_i}_{L^2(\mu)},
 $$
where 
$$
I_{N,h}:= [N^{0.5}p(h)], \quad J_{n,h}:=\Big[\Big(\frac{n-1}{p(h)}\Big)^2,\Big(\frac{n}{p(h)}\Big)^2\Big), \quad n\in I_{N,h}, \, h\in [L_N], \,  N\in\N.
$$

For convenience, we write
$$
J_{n,h}=(k_{n,h},k_{n,h}+l_{n,h}] , \quad
n\in I_{N,h}, \, h\in [L_N], \,  N\in\N,
$$
for some  $k_{n,h}, l_{n,h}\in \N$.

Note that for fixed $n,h\in \N$, when  $n_1$ ranges in $J_{n,h}$
the value of  $p_1(h)n_1^{0.5}$ ranges in an interval of length at most $1$ and the same property holds for the values of
 $p_2(h)n_1^{0.5}$, $q_1(h)n_1^{0.1}$, $q_2(h)n_1^{0.1}$.
Hence,  for $n_1\in J_{n,h}$  we have
$$
p_i(h)n_1^{0.5}+q_i(h)n_1^{0.1}=\frac{p_i(h)}{p(h)}n+q_i(h)\Big(\frac{n}{p(h)}\Big)^{0.2}+e_i(h,n,n_1),\quad i=1,2,
$$
where  $e_1(h,n,n_1) , e_2(h,n,n_1)$ are bounded by $2$  for   all
 $n_1\in J_{n,h}$,   $n\in I_{N,h}$, $h\in [L_N]$, $N\in \N$. Using Lemma~\ref{L:errors},  and since replacing $f_i$ with $T^{\epsilon_{i,N}}f_i$, $i=1,2$,  where $\epsilon_{1,N}, \epsilon_{2,N}$ take finitely many values for $N\in \N$,
does not introduce   changes to our argument,  we can ignore these error terms.  We are thus left with showing  convergence to zero for
 $$
 \E_{h\in [L_N]}\norm{\E_{n\in I_{N,h}}\,   \tilde{w}_{N,h}(n) \cdot \prod_{i=1}^2 T^{\big[\frac{p_i(h)}{p(h)}n+q_i(h) (\frac{n}{p(h)})^{0.2}\big]}f_i}_{L^2(\mu)},
 $$
where for  $n\in [I_{N,h}]$,  $h\in[L_N]$, $N\in \N$, we let
\begin{equation}\label{E:wtilde}
\tilde{w}_{N,h}(n):= \E_{n_1\in  J_{n,h}} \, w_{N,h}(n_1)
= \E_{n_1\in  [l_{n,h}]} \, \Lambda'(n_1+k_{n,h})\cdot \Lambda'(n_1+k_{n,h}+h)\cdot c_{N,h}(n_1+k_{n,h}).
\end{equation}
From the definition of $k_{n,h}$, $l_{n,h}$,  $L_N$, we get that there exists $N_0=N_0(p)\in\N$ such that
$$
k_{n,h}+h\leq l_{n,h}^3,  \quad \text{for all } n\in [N^{0.4},N^{0.5}p(h)], \, h\in [L_N], \, N\geq N_0.
$$
Using Corollary~\ref{C:primes} (with $\ell=1$,  $A=3$, $c=k_{n,h}$, $N=l_{n,h}$) we see that
there exist $D>0$ and $C(h)$, $h\in \N$, such that for the above-mentioned  values of $n,h,N$
we can write
$$
\tilde{w}_{N,h}(n)=C(h)\cdot z_{N,h}(n),
$$
where $(z_{N,h}(n))$ is $1$-bounded  and
$$
\E_{h\in[L_N]}(C(h))^2\leq D
$$
for every $N\in \N$.

We use this estimate, apply the Cauchy-Schwarz inequality, and keep in mind that the part of the intervals $I_{N,h}$ that intersects the interval $[N^{0.4}]$ is negligible for our averages. We deduce that it suffices to  show   convergence to zero  for
$$
\E_{h\in [L_N]}\norm{\E_{n\in I_{N,h}}\,   z_{N,h}(n) \cdot \prod_{i=1}^2 T^{\big[\frac{p_i(h)}{p(h)}n+q_i(h)(\frac{n}{p(h)})^{0.2}\big]}f_i}^2_{L^2(\mu)},
$$
where the sequence  $(z_{N,h}(n))$ is  $1$-bounded.
We write $n=n'p(h)+s$ for some $n'\in  [N^{0.5}]$ and
$s\in  [p(h)]$. For convenience, we also  rename  $n'$ as $n$,
and use Lemma~\ref{L:errors}  to treat  finite valued error sequences that are introduced when we approximate $q_i(h)(n+s/p(h))^{0.2}$ with $q_i(h)n^{0.2}$, $i=1,2$.
 We
get that it suffices to show  convergence to zero  for
$$
\E_{h\in [L_N]}\E_{s\in [p(h)]}\norm{\E_{n\in [N^{0.5}]}\,   z_{N,h,s}(n) \cdot \prod_{i=1}^2 T^{[p_i(h)n+q_i(h)n^{0.2}+e_i(h,s)]}f_i}^2_{L^2(\mu)},
$$
where $(z_{N,h,s}(n))$  is some other  $1$-bounded sequence and $e_i(h,s):=s\frac{p_i(h)}{p(h)}$, $i=1,2$.
After replacing the average $\E_{s\in [p(h)]}$ with $\max_{s\in [p(h)]},$ we are left with dealing with the averages
$$
\E_{h\in [L_N]}\norm{\E_{n\in [N^{0.5}]}\,   z_{N,h}(n) \cdot \prod_{i=1}^2 T^{[p_i(h)n+q_i(h)n^{0.2} +e_{i,N}(h)]}f_i }^2_{L^2(\mu)},
$$
 for some other  $1$-bounded sequence $(z_{N,h}(n))$  and arbitrary sequences of real numbers
 $(e_{1,N}(h)), (e_{2,N}(h))$ (which will be eliminated
later, so their particular form is not important).
\smallskip

 {\bf Step 2.} Our next goal is to reduce matters to showing mean convergence to zero for averages with iterates given by
 polynomials in several variables and real coefficients  (this step corresponds to  Lemma~\ref{L:12} below).
  After using   \eqref{E:VDC1} for the average over $n$, we are left with showing convergence to zero for
\begin{multline*}
	\E_{h,h_1\in [L_N]}\Big|\E_{n\in [N^{0.5}]}\,  c_{N,h,h_1}(n)\cdot  \int
	\prod_{i=1}^2 T^{[p_i(h)(n+h_1)+q_i(h)(n+h_1)^{0.2} +e_{i,N}(h)]}f_i\cdot \\
	\prod_{i=1}^2 T^{[p_i(h)n+q_i(h)n^{0.2} +e_{i,N}(h)]}\overline{f}_i\, d\mu\Big|,
\end{multline*}
where   $(c_{N,h,h_1}(n))$ is a $1$-bounded sequence.
We compose with $T^{-[p_2(h)n+q_2(h)n^{0.2} +e_{2,N}(h)]}$ (and not with $T^{-[p_1(h)n+q_1(h)n^{0.2} +e_{1,N}(h)]}$ because we want the highest  fractional degree iterate to   be applied to the function $f_1$),  use that $(n+h_1)^{0.2}$ can for our purposes be replaced with
$n^{0.2}$, ignore errors that take finitely many values using Lemma~\ref{L:errors},  and use the Cauchy-Schwarz inequality. We are left with showing   convergence to zero for
$$
	\E_{h,h_1\in [L_N]}\norm{\E_{n\in [N^{0.5}]}\, 	c_{N,h,h_1}(n) \cdot
	  T^{[(p_1-p_2)(h)n+(q_1-q_2)(h)n^{0.2} +e_{3,N}(h)]} (T^{[p_1(h)h_1]}f_1\cdot \overline{f}_1)}_{L^2(\mu)},
$$
where $	(c_{N,h,h_1}(n))$ is  some other  $1$-bounded sequence and the sequence  $(e_{3,N}(h))$  takes arbitrary real values.

We consider two cases. Suppose first that  $p_1=p_2$.  Then  by assumption  $q_1-q_2\neq 0$.
 Repeating the argument used in Step 1 we are left with showing convergence to zero for
 $$
 \E_{h,h_1\in [L_N]}\norm{\E_{n\in [N^{0.1}]}\, 	c_{N,h,h_1}(n) \cdot
 	T^{[(q_1-q_2)(h)n +e_{4,N}(h)]} (T^{[p_1(h)h_1]}f_1\cdot \overline{f}_1)}_{L^2(\mu)},
 $$
for some other $1$-bounded sequence of complex numbers  $(c_{N,h,h_1}(n))$ and  $(e_{4,N}(h))$ arbitrary sequence of real numbers.  Using as above
  \eqref{E:VDC1} for the average over $n$,  composing with $T^{-[(q_1-q_2)(h)n +e_{4,N}(h)]}$,
 and then using the Cauchy-Schwarz inequality and Lemma~\ref{L:errors} to treat  errors, we  are  left with  showing mean   convergence to zero for
$$
  \E_{h,h_1,h_2\in [L_N]}\,
	c_{N,h,h_1,h_2}(n)\cdot T^{[(q_1-q_2)(h)h_2+p_1(h)h_1]}f_1\cdot T^{[(q_1-q_2)(h)h_2]}\overline{f}_1 \cdot T^{[p_1(h)h_1]}\overline{f}_1,
$$
for some  $1$-bounded sequence of complex numbers  $(c_{N,h,h_1, h_2}(n))$.

 If $p_1\neq p_2$, we apply  \eqref{E:VDC1} for the average over $n$, then compose with the transformation   $T^{-[(p_1-p_2)(h)n+(q_1-q_2)(h)n^{0.2} +e_{3,N}(h)]}$ and use the Cauchy-Schwarz inequality and Lemma~\ref{L:errors} to treat errors. We are left with showing mean convergence to zero for
$$
\E_{h,h_1,h_2\in [L_N]}\, 	c_{N,h,h_1,h_2}(n) \cdot
	T^{[(p_1-p_2)(h)h_2+p_1(h)h_1]}f_1\cdot T^{[(p_1-p_2)(h)h_2]}\overline{f}_1 	\cdot T^{[p_1(h)h_1]}\overline{f}_1,
$$
for some other $1$-bounded sequence of complex numbers  $(c_{N,h,h_1, h_2}(n))$.
\smallskip

{\bf Step 3.} In  Step 2 we were led to show mean convergence  to zero for averages with iterates  given by non-constant polynomials  with real coefficients in several variables  that have   pairwise non-constant differences. For such averages
one can argue as in \cite{L05} in order to show
 that  there exists $s\in \N$ such that if $\nnorm{f_1}_s=0$, then we have mean convergence to zero. For more details see the proof of  Lemma~\ref{L:13} below.
 This achieves our goal.

\subsection{Reduction to averages with bounded weights and change of variables}
 Our first goal is to prove the following result that allows us to restrict to the case where the weights $w_{N,\uh}$ are $1$-bounded and also allows us to perform
 the substitution $n\mapsto n^{1/d}$.

\begin{lemma}\label{L:11}
	For  $k\in\Z_+,\ell \in \N$, let  $a_1,\ldots, a_\ell$ be a nice collection of fractional polynomials with $k$-parameters  and suppose that $d:=\fdeg(a_1)\in (0,1)$. Then  the following holds:  If $(X,\mu,T)$ is a system,   $f_{N,\uh,1},\ldots, f_{N,\uh,\ell}\in L^\infty(\mu)$, $\uh\in \N^k,N\in\N$,   are $1$-bounded functions, $a>0$, and
	$$
	w_{N,\uh}(n):=(\Delta_\uh\Lambda')(n)\cdot c_{N,\uh}(n) \quad \text{or} \quad
w_{N,\uh}(n):=	c_{N,\uh}(n), \quad  \uh\in [L_N]^k, \, n\in [N^a],\,   N\in\N,
	$$
	where
	$(c_{N,\uh}(n))$ is a  $1$-bounded sequence,
	then      there exist a	$1$-bounded sequence  $(z_{N,\uh}(n))$  and  sequences of real numbers  $(e_{1,N}(\uh)), \ldots, (e_{\ell,N}(\uh))$, such that
	\begin{multline}\label{E:11}
	\E_{\uh\in [L_N]^k}	\norm{\E_{n\in [N^a]}\, w_{N,\uh}(n) \cdot \prod_{i=1}^\ell T^{[a_i(\uh,n)]} f_{N,\uh,i}}_{L^2(\mu)}\ll_{k, a_1,\ldots, a_\ell} \\
	\E_{\uh\in [L_N]^k}	\norm{\E_{n\in [N^{ad}]}\, z_{N,\uh}(n) \cdot \prod_{i=1}^{\ell} T^{[a_i(\uh,n^{1/d})+e_{i,N}(\uh)]} f_{N,\uh,i}}_{L^2(\mu)} +o_N(1),
	\end{multline}
where $o_N(1)$ is a quantity that
converges to  $0$ when $N\to \infty$ and all other parameters remain fixed.
\end{lemma}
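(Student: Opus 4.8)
The plan is to carry out a change of variables $n \mapsto n^{1/d}$ on dyadic-type blocks, combined with the sieve-theoretic input of Corollary~\ref{C:primes} to absorb the unbounded weight $(\Delta_\uh\Lambda')(n)$ into an $\uh$-dependent constant with controlled $\ell^2$-average. Concretely, for each $\uh \in [L_N]^k$, set $p(\uh) := [\max_j (|p_{j}(\uh)| \text{ over all coefficient polynomials appearing in } a_1,\ldots,a_\ell)^{M}] + 1$ for a large fixed exponent $M$ (depending only on $k$ and the $a_i$); the point of this choice is that $p(\uh)$ is large enough that over the block $J_{m,\uh} := \big[((m-1)/p(\uh))^{1/d}, (m/p(\uh))^{1/d}\big)$ each term $p_j(\uh) n^{d_j}$ varies by at most $1$ as $n$ ranges over $J_{m,\uh}$ (here I use $d = \fdeg(a_1) \geq \fdeg(a_i)$). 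Partitioning $[N^a]$ into these blocks indexed by $m \in I_{N,\uh} := [N^{ad} p(\uh)]$ and using the triangle inequality, I reduce the left-hand side of \eqref{E:11} to
\begin{equation*}
\E_{\uh\in[L_N]^k}\, \norm{\E_{m\in I_{N,\uh}}\, \E_{n\in J_{m,\uh}}\, w_{N,\uh}(n)\cdot \prod_{i=1}^\ell T^{[a_i(\uh,n)]}f_{N,\uh,i}}_{L^2(\mu)} + o_N(1),
\end{equation*}
where the $o_N(1)$ accounts for the incomplete block at the end and (when $w_{N,\uh}$ is the sieve weight) for the contribution of $m$ with small index — here I would discard $m \leq N^{ad\delta}$ for a small fixed $\delta$, since the number of such $m$ relative to $|I_{N,\uh}|$ is $o_N(1)$ uniformly, using $p(\uh) \ll L_N^{O(1)} \prec N^a$.

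Next, on each block $J_{m,\uh}$ I replace $a_i(\uh,n)$ by its frozen value $a_i(\uh, (m/p(\uh))^{1/d})$ at a cost of a bounded error $e_i(\uh,m,n) \in [-c,c]$; by Lemma~\ref{L:errors} (in the integer-part form recorded after its proof) these errors and the resulting integer-part discrepancies can be replaced by finitely many constants $\epsilon_i$, which amounts to composing each $f_{N,\uh,i}$ with a bounded power of $T$ — harmless since it does not affect the hypotheses. Writing $J_{m,\uh} = (k_{m,\uh}, k_{m,\uh}+l_{m,\uh}]$, the inner average becomes $\tilde w_{N,\uh}(m) := \E_{n\in J_{m,\uh}} w_{N,\uh}(n)$ times a product of $T$-powers of the $f_{N,\uh,i}$ depending only on $m, \uh$. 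In the bounded case $w_{N,\uh} = c_{N,\uh}$ we simply have $|\tilde w_{N,\uh}(m)| \leq 1$ and proceed. In the sieve case, $\tilde w_{N,\uh}(m) = \E_{n_1 \in [l_{m,\uh}]} (\Delta_\uh \Lambda')(n_1 + k_{m,\uh})\cdot c_{N,\uh}(n_1+k_{m,\uh})$; since the discarded small-$m$ range guarantees $k_{m,\uh} + h_1 + \cdots + h_k \leq l_{m,\uh}^A$ for a fixed $A$ once $N$ is large, Corollary~\ref{C:primes}(i) gives $\tilde w_{N,\uh}(m) = C(\uh)\cdot z_{N,\uh}(m)$ with $(z_{N,\uh}(m))$ $1$-bounded and $C(\uh) := C_{A,k}(\uh)$, and Corollary~\ref{C:primes}(ii) gives $\E_{\uh\in[L_N]^k}(C(\uh))^2 \leq D$ uniformly.

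Now I pull out $C(\uh)$, apply the Cauchy–Schwarz inequality over $\uh$ — this is exactly why the statement has an $\ell^2$-type bound and why the square in Corollary~\ref{C:primes}(ii) is needed — to convert the $\E_{\uh} C(\uh)\norm{\cdots}$ into $D^{1/2}$ times $(\E_{\uh}\norm{\cdots}^2)^{1/2}$; since everything is $1$-bounded the square can then be reabsorbed (replacing the norm by its square only increases the quantity after one more Cauchy–Schwarz, or one simply carries the square throughout, as in the example in Section~\ref{Ex1}). Finally I rename the block index $m$ as $n$ and observe that $a_i(\uh, (m/p(\uh))^{1/d})$, after writing $m = n' p(\uh) + s$ and absorbing the bounded shift $s$ into an $\uh$-dependent real error $e_{i,N}(\uh)$ via Lemma~\ref{L:errors} once more, is precisely $a_i(\uh, n^{1/d}) + e_{i,N}(\uh)$ up to the substitution — this uses that $a_i(\uh, (p(\uh) t)^{1/d}) = a_i(\uh, t^{1/d}p(\uh)^{1/d})$ and the homogeneity of each monomial $t^{d_j/d}$, so that the $p(\uh)$-factors are absorbed into the coefficient polynomials (which change but stay polynomial in $\uh$) and the extra $s/p(\uh)$-correction to the lower-degree monomials is $O(1)$. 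The range of the new variable is $[N^{ad}]$ as claimed. The main obstacle is the bookkeeping in this last step: tracking that freezing $a_i$ on each block, discarding the low-index blocks, and performing the substitution all produce only finitely-valued or $\uh$-dependent-real error terms that Lemma~\ref{L:errors} can legitimately absorb, and that the exponent $M$ in the definition of $p(\uh)$ can be chosen once and for all to make every monomial vary by at most $1$ across each block while keeping $p(\uh) \prec N^a$; the analytic content beyond this is entirely contained in Corollary~\ref{C:primes}.
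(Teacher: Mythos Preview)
Your proposal is correct and follows essentially the same approach as the paper's proof. Two points of bookkeeping deserve mention. First, Corollary~\ref{C:primes} only applies for $\uh \in ([L_N]^k)^*$; the paper disposes of the remaining $\uh$ separately using $|[L_N]^k \setminus ([L_N]^k)^*| \ll_k L_N^{k-1}$ together with the crude bound $\max_{n}(\Delta_{\uh}\Lambda')(n) \ll (\log N)^{2^k} \prec L_N$. Second, your justification of the final substitution is garbled: there is no ``absorption of $p(\uh)$-factors into coefficient polynomials'' (indeed $p(\uh)^{d_j/d}$ is not polynomial in $\uh$). What actually happens is that after $m = n'p(\uh)+s$ one has $a_i\big(\uh,(n'+s/p(\uh))^{1/d}\big)$, whose top-degree term is exactly $p_{i,r}(\uh)\,n' + \tfrac{p_{i,r}(\uh)}{p(\uh)}s$ while each sublinear monomial satisfies $p_{i,j}(\uh)(n'+s/p(\uh))^{d_j/d} = p_{i,j}(\uh)(n')^{d_j/d} + o(1)$ by the mean value theorem; one then replaces $\E_{s\in[p(\uh)]}$ by $\max_{s\in[p(\uh)]}$ (not Lemma~\ref{L:errors}, since the range of $s$ depends on $\uh$) to obtain the $\uh$-dependent real error $e_{i,N}(\uh)$.
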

\begin{remark}
	It is important   that the function $a_1$ has sublinear growth; our argument would not work if $a_1$ had linear or larger than linear growth.


\end{remark}
\begin{proof}	
 We cover the case where 	
 $w_{N,\uh}(n)=(\Delta_\uh\Lambda')(n)\cdot c_{N,\uh}(n)$, the case where $w_{N,\uh}(n)= c_{N,\uh}(n)$ is similar (in fact easier).
	
By assumption, we have that $a_i(\uh,t):=\sum_{j=0}^r p_{i,j}(\uh)t^{d_j}$, $i=1,\ldots, \ell,$ where $0=d_0< d_1<\ldots<d_r=d< 1$
and $p_{i,j}\in \R[t_1,\ldots, t_k]$ with  $p_{1,r}\neq 0$.   We let
$$
p(\uh):=\big[\max_{i,j}\{|p_{i,j}|(\uh)\}^{\frac{1}{d_1}}\big]+1, \quad \uh\in \N^k.
$$

  For $\uh\in [L_N]^k$, after partitioning $[N^a]$ into sub-intervals,
we deduce that it suffices to get an upper bound for the averages
$$
\E_{\uh\in [L_N]^k}	\norm{\E_{n\in I_{N,\uh}}\, \E^*_{n_1\in J_{n,\uh}}\,
	 w_{N,\uh}(n_1)
	 \cdot \prod_{i=1}^\ell T^{[a_i(\uh,n_1)]} f_{N,\uh,i}}_{L^2(\mu)},
$$
where
$$
I_{N,\uh}:= [N^{ad}p(\uh)], \quad J_{n,\uh}:=\Big[\Big(\frac{n-1}{p(\uh)}\Big)^{\frac{1}{d}},
\Big(\frac{n}{p(\uh)}\Big)^{\frac{1}{d}}\Big), \quad   n\in I_{N,\uh},\, \uh\in [L_N]^k,\, N\in\N,
$$
 and  for $D\colon \N\to \C$  and fixed $N\in\N$,  $n\in I_{N,\uh},\, \uh\in [L_N]^k$, we let
\begin{equation}\label{E:654}
\E^*_{n_1\in J_{n,\uh}}D(n_1) :=\frac{1}{N^a/|I_{N,\uh}|}\sum_{n_1\in J_{n,\uh}}D(n_1).
\end{equation}
Note that an application of the mean value theorem gives  
\begin{equation}\label{E:654'}
|J_{n,\uh}|\leq\frac{1}{d}\frac{(N^{ad}p(h))^{\frac{1}{d}-1} }{p(h)^\frac{1}{d}}=\frac{1}{d} \cdot \frac{N^a}{|I_{N,\uh}|}, \quad n\in I_{N,\uh},\, \uh\in [L_N]^k, \, N\in \N.\footnote{We crucially used here that fractional polynomials do not grow too slowly. The estimate would fail if, for example, for $\ell=1$ we started with $a_1(t): =\log{t}$. }
\end{equation}

For convenience, we write
$$
J_{n,\uh}=(k_{n,\uh},k_{n,\uh}+l_{n,\uh} ], \quad  n\in I_{N,\uh},\,  h\in [L_N]^k,\,  N\in \N,
$$
for some $k_{n,\uh}, l_{n,\uh}\in \N$. Note that for $i=1,\ldots,\ell$, $j=1,\ldots, r$,  and fixed $n,\uh$,  when $n_1$ ranges on $J_{n,\uh}$ the values of $p_{i,j}(\uh)n_1^{d_j}$ belong to an interval of length $1$.
Hence,  for $i=1,\ldots, \ell$ 
  we can write
$$
a_i(\uh,n_1)=a_i(\uh,(n/p(\uh))^{1/d})+\epsilon_i(\uh,n, n_1),
$$
where $\epsilon_i(\uh,n,n_1)$ is bounded by $r$ for all $n_1\in J_{n,\uh}$, $n\in I_{N,\uh}$, $\uh\in [L_N]^k$, $N\in \N$.

The terms $\epsilon_i(\uh,n,n_1)$ can be easily taken care by  using Lemma~\ref{L:errors} and appropriately modifying $(c_{N,\uh}(n))$ to another bounded  sequence of weights.
We deduce that    it suffices to get   an upper bound for the averages
\begin{equation}\label{E:restricted}
\E_{\uh\in [L_N]^k}	\norm{\E_{n\in I_{N,\uh}}\,  {\bf 1}_{I'_{N,\uh}}(n)  \, \tilde{w}_{N,\uh}(n)   \cdot \prod_{i=1}^\ell T^{[a_i(\uh,(n/p(\uh))^{1/d})]+\epsilon_{i,N}} f_{N,\uh,i}}_{L^2(\mu)},
\end{equation}
where  $I'_{N,\uh}:= [N^{\frac{ad}{2}},N^{ad}p(\uh)]$, $N\in \N$ (the indicator introduces a negligible $o_N(1)$ term), $\epsilon_{1,N},\ldots, \epsilon_{\ell,N}$ take finitely many values for $N\in\N$, and  for $n\in [I_{N,\uh}], \uh\in[L_N]^k, N\in\N$, we let
\begin{equation}\label{E:z}
\tilde{w}_{N,\uh}(n):=\E^*_{n_1\in J_{n,\uh}}\,   w_{N,\uh}(n_1).
\end{equation}
 We used that $L_N,\Lambda'(N)\prec N^\varepsilon$ for all $\varepsilon>0$ in order to justify that inserting the indicator ${\bf 1}_{I'_{N,\uh}}$ only introduces  an  $o_N(1)$ term, which is fine for our purposes.

Using that  $(c_{N,\uh}(n))$ is $1$-bounded, $((\Delta_{\uh}\Lambda')(n))$  is non-negative, and \eqref{E:654}, \eqref{E:654'},  we deduce  that
\begin{equation}\label{E:wtilde}
|\tilde{w}_{N,\uh}(n)|\leq d^{-1}\cdot \E_{n_1\in J_{n, \uh}}\,  (\Delta_{\uh}\Lambda')(n_1)=d^{-1}\cdot  \E_{n_1\in [l_{n,\uh}]}\,
(\Delta_{\uh}\Lambda')(n_1+k_{n,\uh}).
\end{equation}
From the definition of $l_{n,\uh}$ and the mean value theorem we have that
$$
l_{n,\uh}\geq \frac{n^{\frac{1}{d}-1}}{d(p(\uh))^{\frac{1}{d}}}, \quad n\in\N.
$$
Since $L_N\prec N^\varepsilon$ for every $\varepsilon>0$ and $k_{n,\uh}\leq n^{1/d}$, it follows  
that if $A>\frac{1}{1-d}$, for example if $A:=\frac{1}{1-d}+1$, then  there exists $N_0=N_0(d,p)\in\N$ such that  for all $N\geq N_0$  and  all  $n\in I'_{N,\uh}$,   $\uh\in [L_N]^k$,
we have
$$
 k_{n,\uh}\leq l_{n,\uh}^A.\footnote{ In the process of deriving this estimate we crucially used that sublinear fractional polynomials are not too close to linear ones. The estimate would fail if, for example, for $\ell=1$ we started with $a_1(t):=t/\log{t}$.}
$$
Hence,  there exists $N_1=N_1(d,k,p)\in\N$ such that  for all $N\geq N_1$ we have for all  $n\in I'_{N,\uh}$ and  $\uh=(h_1,\ldots, h_k)\in [L_N]^k$
that
 $$
 k_{n,\uh}+h_1+\cdots + h_k \leq  l_{n,\uh}^A.
$$
We will combine this with the identity
$$
(\Delta_{\uh}\Lambda')(n)=\prod_{\epsilon\in \{0,1\}^k}\Lambda'(n+\epsilon\cdot \uh),
$$
the estimate  \eqref{E:wtilde}, and Corollary~\ref{C:primes} (with $\ell:=k$, $c:=k_{n,\uh}$, $N:=l_{n,\uh}$, $A:=\frac{1}{1-d}+1$). We deduce  that there exist $C=C(d,k)>0$ and  $C_{d,k}(\uh)>0$, $\uh\in \N^k$,   such that for  all large enough $N$ (depending only on $d,k,p$), for every $n\in I_{N,\uh}$, $\uh\in ([L_N]^k)^*$, we can write
\begin{equation}\label{E:C1}
 \tilde{w}_{N,\uh}(n)=C_{d,k}(\uh)\cdot  z_{N,\uh}(n),
\end{equation}
where $(z_{N,\uh}(n))$ is $1$-bounded and
\begin{equation}\label{E:C2}
\E_{\uh\in [L_N]^k}(C_{d,k}(\uh))^2\leq C
\end{equation}
for every $N\in \N$.

Note that since $L_N\succ (\log{N})^K$ for every $K>0$ and $\Lambda'(n)\leq \log{n}$, for every $n\in\N$   we have that
$\max_{\uh\in [L_N]^k, n\in [N]}(\tilde{w}_{N,\uh}(n))^2\prec L_N$.
 Using this and since by    \eqref{E:card*} we have that
  $\frac{1}{L_N^k}|[L_N]^k\setminus ([L_N]^k)^*|\ll_k \frac{1}{L_N}$, we deduce
  that we can redefine  $C(\uh)$ on the complement of $([L_N]^k)^*$ so
    that for all large enough $N$ (depending on $d,k,p$) equation  \eqref{E:C1} holds for all  $n\in I_{N,\uh}$, $\uh\in [L_N]^k$, and
    \eqref{E:C2} also holds (for some larger constant $C'$ in place of $C$).

We now use  \eqref{E:C1}, \eqref{E:C2}, and the Cauchy-Schwarz inequality,
in order to  bound  the averages in \eqref{E:restricted}. We can also  remove the indicator  $ {\bf 1}_{I'_{N,\uh}}(n)$ since it  has a negligible effect on our averages. We deduce that it suffices to get an upper bound for  the averages
$$
\E_{\uh\in [L_N]^k}	\norm{\E_{n\in I_{N,\uh}}\,   \, z_{N,\uh}(n)   \cdot \prod_{i=1}^\ell T^{[a_i(\uh,(n/p(\uh))^{1/d})]+\epsilon_{i,N}} f_{N,\uh,i}}^2_{L^2(\mu)}.
$$
Note that since the weights and the functions are bounded, it suffices to get an upper bound for the previous expression ignoring the square.
For $\uh\in [L_N]^k$ we can express $n\in I_{N,\uh}$ as $n=n'p(\uh)+s$  for some $n'\in [N^{ad}]$ or $n'=0$ and $s\in [p(\uh)]$.
After renaming  $n'$ as $n$ for convenience, we are led to upper-bounding the averages
\begin{equation}\label{E:789}
 \E_{\uh\in [L_N]^k}\E_{s\in [p(\uh)]}	\norm{\E_{n\in [N^{ad}]}\,   \, z_{N,\uh,s}(n)   \cdot \prod_{i=1}^\ell T^{[a_i(\uh,(n+s/p(\uh))^{1/d})]+\epsilon_{i,N}} f_{N,\uh,i}}_{L^2(\mu)}
 \end{equation}
 for some $1$-bounded sequence $(z_{N,\uh,s}(n))$.
Note that if $u\in (0,1)$ and $q\in\R[t_1,\ldots, t_k]$,  then
an application of the mean value theorem shows that   for every $\varepsilon>0$ we have
$$
\lim_{N\to\infty}\sup_{c\in [0,1],\uh\in[L_N]^k,n\geq N^\varepsilon}|q(\uh)((n+c)^u-n^u)|=0.
$$
 It follows that in \eqref{E:789}
 when computing $a_i(\uh,(n+s/p(\uh))^{1/d})$ we can replace $n+s/p(\uh)$ with $n$ in
  the non-linear monomials; this will lead to some
 error sequences that are $1$-bounded for large enough $N$ and can be handled by appealing to Lemma~\ref{L:errors} (and redefining the sequence $z_{N,\uh}(n)$). With this in mind, it follows that in  \eqref{E:789} we can
 replace $a_i(\uh,(n+s/p(\uh))^{1/d})$ with $a_i(\uh,n^{1/d})+\frac{p_{i,r}(\uh)}{p(\uh)}s$. Hence, it suffices to get an upper  bound for the averages
  $$
 \E_{\uh\in [L_N]^k}\E_{s\in [p(\uh)]}	\norm{\E_{n\in [N^{ad}]}\,   \, z_{N,\uh}(n)   \cdot \prod_{i=1}^\ell T^{[a_i(\uh,n^{1/d})+e_{i,N}(\uh,s)]} f_{N,\uh,i}}_{L^2(\mu)},
 $$
 where $e_{i,N}(\uh,s):=\frac{p_{i,r}(\uh)}{p(\uh)}s+\epsilon_{i,N}$, $i=1,\ldots, \ell$, and $\epsilon_{1,N},\ldots, \epsilon_{\ell,N}$ take finitely many values for $N\in\N$.
 After replacing the average $\E_{s\in [p(\uh)]}$ with $\max_{s\in [p(\uh)]}$ we are led to the asserted upper bound in \eqref{E:11}.
\end{proof}

\subsection{Reduction to averages with polynomial iterates}
For the purposes of the next lemma it will be convenient to slightly enlarge the class of  polynomials with real exponents that we work with  to   include those with  fractional degree equal to  $1$.

\begin{lemma}\label{L:12}
	Let $k\in \Z_+, \ell\in\N$ and $a_1,\ldots, a_\ell\colon \N^k\times \N\to \R$ be a nice collection of polynomials
	with real exponents and $k$-parameters of  fractional  degree at most $1$.
	Then there exist $l,r\in \N$ and non-constant polynomials $P_1,\ldots, P_r\in \R[t_1,\ldots, t_{k+l}]$, with pairwise non-constant differences,  such that the following holds: If $(X,\mu,T)$ is a system and $f_{N,\uh,1},\ldots, f_{N,\uh,\ell}\in L^\infty(\mu)$, $\uh\in \N^k,N\in\N$,  are $1$-bounded functions,
	 then for every $a>0$, sequences of real numbers $(e_{1,N}(\uh)),\ldots, (e_{\ell,N}(\uh))$,  and $1$-bounded sequence of complex numbers $(c_{N,\uh}(n))$,    we have
\begin{multline}\label{E:12}
	\E_{\uh\in [L_N]^k}	\norm{\E_{n\in [N^a]}\, c_{N,\uh}(n) \cdot \prod_{i=1}^\ell T^{[a_i(\uh,n)+e_{i,N}(\uh)]} f_{N,\uh,i}}_{L^2(\mu)}\ll_{k, a_1,\ldots, a_\ell} \\
	\E_{\uh_1\in [L_N]^k,\uh_2\in [L_N]^l} \,\Big| \int \prod_{i=0}^{r} T^{[P_i(\uh_1,\uh_2)]+\epsilon_{i,N}} F_{N,\uh_1, i} \, d\mu\Big| +o_N(1),
	\end{multline}
 where  $P_0:=0$,  $ F_{N,\uh_1, i} \in \{f_{N,\uh_1,1}, \overline{f}_{N,\uh_1,1} \}$ for $i=0,\ldots, r$,  $\uh_1\in [L_N]^k,N\in\N$,  $\epsilon_{0,N},\ldots, \epsilon_{r,N}$ take finitely many values for $N\in\N$,
 and $o_N(1)$ is a quantity that
 converges to  $0$ when $N\to \infty$ and all other parameters remain fixed.
 \end{lemma}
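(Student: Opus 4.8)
The plan is to prove \eqref{E:12} by a PET-type induction in the spirit of \cite{B87} (as later adapted in \cite{L05}), using the van der Corput estimate \eqref{E:VDC1} to eliminate the sequences $a_1,\ldots,a_\ell$ one at a time; after a bounded number of steps all surviving iterates are constant in $n$, i.e.\ integer parts of polynomials in the auxiliary parameters, and we are in the shape of the right-hand side of \eqref{E:12}.

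I would begin with two preliminary reductions. First, restricting the average over $[N^a]$ to $n\in[N^{a/2},N^a]$ changes the left-hand side by at most $N^{-a/2}=o_N(1)$, so we may assume $n\ge N^{a/2}$; this is used so that, writing $a_i(\uh,t)=\sum_j p_{i,j}(\uh)t^{d_j}$ with every $d_j\le 1$, the mean value theorem together with $L_N\prec N^{\varepsilon}$ (for every $\varepsilon>0$) gives $a_i(\uh,n+h)=a_i(\uh,n)+q_i(\uh)\,h+o_N(1)$ uniformly over $\uh\in[L_N]^k$ and $h\in[L_N]$, where $q_i(\uh)$ is the coefficient of $t$ in $a_i(\uh,t)$ (so $q_i\equiv 0$ unless $\fdeg(a_i)=1$). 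Second, if $\fdeg(a_1)<1$ the collection is sublinear and nice, so Lemma~\ref{L:11} applied with the $1$-bounded weight $c_{N,\uh}(n)$ — carrying the constant-in-$t$ shifts $e_{i,N}(\uh)$ along, which is harmless — replaces each $a_i(\uh,n)$ by $a_i(\uh,n^{1/d})$, $d=\fdeg(a_1)$, a nice collection with $\fdeg(a_1)=1$; alternatively one runs the argument below directly for $\fdeg(a_1)\le 1$.

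The core of the proof is a single step that takes an average $\E_{\uh\in[L_N]^k}\norm{\E_{n\in[N^a]}c_{N,\uh}(n)\prod_i T^{[b_i(\uh,n)+e_{i,N}(\uh)]}F_{N,\uh,i}}_{L^2(\mu)}$, where the $b_i$ are fractional polynomials with $k$ parameters, each $F_{N,\uh,i}$ is a product of copies of $f_{N,\uh,1}$ and $\overline{f}_{N,\uh,1}$, and $(c_{N,\uh}(n))$ is $1$-bounded, and reduces it to an average of the same shape with $k$ replaced by $k+1$ and with strictly fewer iterates that depend on $n$. Concretely: apply \eqref{E:VDC1} to the average over $n$ (legitimate since the vectors involved are $1$-bounded and $1\prec L_N\prec N^a$), producing a new parameter $h\in[L_N]$; in the resulting inner product $\E_n\langle u_N(n+h),u_N(n)\rangle$ compose with $T^{-[b_{i_0}(\uh,n)+e_{i_0,N}(\uh)]}$ for a suitable index $i_0$; using the approximation above and Lemma~\ref{L:errors}, rewrite each resulting iterate, modulo finitely-many-valued shifts, as the integer part of $(b_i-b_{i_0})(\uh,n)+q_i(\uh)h$ or of $(b_i-b_{i_0})(\uh,n)$, which are fractional polynomials in the $k+1$ parameters $(\uh,h)$; finally, isolate the factors whose iterate has become constant in $n$ — their product is a single function, independent of $n$, built from copies of $f_{N,\uh,1}$ and $\overline{f}_{N,\uh,1}$ with iterates polynomial in $(\uh,h)$ — and pull it out of the average via the Cauchy--Schwarz inequality. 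The index $i_0$ must be chosen, following the PET bookkeeping, so that a suitable complexity weight of the collection of $n$-dependent iterates strictly decreases and so that the iterate of maximal fractional degree stays attached to a product of copies of $f_{N,\uh,1}$ and $\overline{f}_{N,\uh,1}$; this is precisely where the niceness hypothesis — the privileged position of $a_1$ and the non-constancy of $a_1-a_2,\ldots,a_1-a_\ell$ — enters. Iterating finitely often (the number of steps, hence the value of $l$, is bounded in terms of the initial data), one more application of \eqref{E:VDC1} leaves every iterate constant in $n$, so $\E_n c_{N,\uh}(n)(\cdots)$ then contributes only a factor of modulus $\le 1$; expanding the remaining products of $f_{N,\uh_1,1}$'s and $\overline{f}_{N,\uh_1,1}$'s into individual factors, we reach $\E_{\uh_1\in[L_N]^k,\uh_2\in[L_N]^l}\bigl|\int\prod_i T^{[Q_i(\uh_1,\uh_2)]+\epsilon_{i,N}}F_{N,\uh_1,i}\,d\mu\bigr|$ with $F_{N,\uh_1,i}\in\{f_{N,\uh_1,1},\overline{f}_{N,\uh_1,1}\}$, and a routine final adjustment (again with Lemma~\ref{L:errors}) puts the exponents in the stated form $P_0=0$, $P_1,\ldots,P_r$ non-constant with pairwise non-constant differences.

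The main obstacle is the combinatorial bookkeeping inside the core step: selecting $i_0$ and designing a complexity weight that provably decreases at each application of \eqref{E:VDC1} (as in \cite{B87,L05}), while simultaneously preserving the two structural features peculiar to our situation — that every function in play remains built from $f_{N,\uh,1}$ and $\overline{f}_{N,\uh,1}$, and that the highest-fractional-degree iterate is never consumed by the composition step. A secondary but recurring technical point is keeping all error estimates uniform over the growing number of parameters $\uh\in[L_N]^k$, which is where the slow growth $L_N\prec N^{\varepsilon}$ is repeatedly used.
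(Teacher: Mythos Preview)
Your overall plan --- iterated van der Corput with one composition-and-Cauchy--Schwarz per round, reducing the number of $n$-dependent iterates by one each time --- matches the paper's induction on $\ell$. Two points need correction.

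First, your description of the intermediate form is inaccurate: it is not true that after a few rounds ``each $F_{N,\uh,i}$ is a product of copies of $f_{N,\uh,1}$ and $\overline{f}_{N,\uh,1}$''. Only the function attached to the maximal-degree iterate is; the remaining ones are built from $f_{N,\uh,2},\ldots,f_{N,\uh,\ell}$ and are discarded one by one via Cauchy--Schwarz. For the same reason, the constant-in-$n$ factor you pull out involves $F_{N,\uh,i_0}$ and $\overline{F}_{N,\uh,i_0}$, not copies of $f_{N,\uh,1}$. This mislabeling is harmless for the argument (you only need the surviving function at position~$1$ to accumulate copies of $f_{N,\uh,1}$), but it hides the actual mechanism.

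Second, and this is a genuine gap: you invoke Lemma~\ref{L:11} only once, as a preliminary, to arrange $\fdeg(a_1)=1$, but this property is not preserved by your core step. After one round the new first iterate is $a_1-a_{i_0}$, which can have fractional degree strictly below~$1$ (e.g.\ $a_1(\uh,t)=p(\uh)t+t^{0.5}$ and $a_{i_0}(\uh,t)=p(\uh)t+t^{0.3}$ give $a_1-a_{i_0}=t^{0.5}-t^{0.3}$). In that situation your coefficient $q_1$ vanishes, the next van der Corput round yields the trivial shift $T^{0}f_{N,\uh,1}\cdot\overline{f}_{N,\uh,1}$ instead of $T^{[p_{1,r}(\uh)h_{k+1}]}f_{N,\uh,1}\cdot\overline{f}_{N,\uh,1}$, and the polynomials $P_i$ you obtain at the end fail the required ``non-constant with pairwise non-constant differences'' condition (several coincide). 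The paper avoids this by re-applying Lemma~\ref{L:11} at the start of \emph{every} inductive step (its Step~1), relinearising the leading iterate before each application of \eqref{E:VDC1}; your remark that ``alternatively one runs the argument below directly for $\fdeg(a_1)\le 1$'' is precisely the case where the step breaks down.
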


\begin{proof}	
	We first reduce to the case where $e_{i,N}(\uh)=0$ for $i=1, \ldots, \ell$.
	To do this, we  replace $[a_i(\uh,n)+e_{i,N}(\uh)]$ with $[a_i(\uh,n)]+[e_{i,N}(\uh)]$, this introduces
	some error sequences on the exponents that take finitely many values.  To treat the error sequences we use Lemma~\ref{L:errors}, redefine the weight $(c_{N,\uh}(n))$, and introduce
	some   sequences $\epsilon_{1,N}, \ldots, \epsilon_{\ell,N}$ that take finitely many values for $N\in\N$.  Next, we compose with $T^{-[e_{1,N}(\uh)]-\epsilon_{1,N}}$ and
	we are left with upper-bounding the expression
	$$
	\E_{\uh\in [L_N]^k}	\norm{\E_{n\in [N^a]}\, c_{N,\uh}(n) \cdot \prod_{i=1}^\ell T^{[a_i(\uh,n)]} (T^{[e_{i,N}(\uh)]-[e_{1,N}(\uh)]+\epsilon_{i,N}-\epsilon_{1,N}}f_{N,\uh,i})}_{L^2(\mu)}.
	$$
	If we rename  for $i=2,\ldots, \ell$ the functions
	 $T^{[e_{i,N}(\uh)]-[e_{1,N}(\uh)]+\epsilon_{i,N}-\epsilon_{1,N}}f_{N,\uh,i}$
as $f_{N,\uh,i}$, we are reduced to bounding \eqref{E:12} when $e_{i,N}(\uh)=0$ for $i=1, \ldots, \ell$.	
	
	We will  prove the statement by induction on $\ell\in \N$.
		For $\ell=1$ the argument is similar to the one used in the inductive step, so  we only summarise it briefly (for more details see Steps~1-3 below).
		We first use Lemma~\ref{L:11} and we are led to upper-bounding the averages
		$$
	\E_{\uh\in [L_N]^k}	\norm{\E_{n\in [N^a]}\, c_{N,\uh}(n) \cdot  T^{[p_1(\uh)n+q_1(\uh,n)]} f_{N,\uh,1}}_{L^2(\mu)},
	$$
	where $p_1\neq 0$ and  $q_1$ is a polynomial with real exponents and $\fdeg(q_1)<1$. We  then 		
		apply  \eqref{E:VDC1} for the average over $n$, compose with $T^{-[p_1(\uh)n+q_1(\uh,n)]}$, use that
		$q_1(\uh,n+h_{k+1})-q_1(\uh,n)$ is negligible  for the range of parameters we are interested in, and use Lemma~\ref{L:errors} to treat the finite valued error sequences that arise. We get an upper bound by the averages
		$$
	\E_{\uh\in [L_N]^k, h_{k+1}\in [L_N]}	\Big|  \int T^{[p_1(\uh)h_{k+1}]+\epsilon_N} f_{N,\uh,1} \cdot  \overline{f}_{N,\uh,1} \, d\mu\Big|,
	$$
	where $\epsilon_N$ takes finitely many values for $N\in\N$. This proves \eqref{E:12} (with $\ell=r=1$).

	Suppose that $\ell\geq 2$ and  the statement holds for all nice collections of $\ell-1$ polynomials with real exponents and finitely many parameters.
	
We have that $a_i(\uh,t):=\sum_{j=1}^r p_{i,j}(\uh)t^{d_j}$, $i=1,\ldots, \ell$ where $0\leq d_1<\cdots<d_r=d\leq 1$
and $p_{i,j}\in \R[t_1,\ldots, t_k]$.  Furthermore, we can assume that the polynomial
$p_{1,r}$ is non-zero, and hence the fractional degree of $a_1$ is $d$.
\smallskip

{\bf Step 1} (Linearising the highest-order term).
If  the fractional degree of $a_1$ is $1$, then we proceed to Step 2.
If not, then Lemma~\ref{L:11}  (for $w_{N,\uh}:=c_{N,\uh}$) applies and we get an estimate of the form \eqref{E:11}.
Hence, in order to get an estimate of the form \eqref{E:12}, it suffices to get a similar  estimate  for the averages
$$
\E_{\uh\in [L_N]^k}	\norm{\E_{n\in [N^{ad}]}\, c_{N,\uh}(n) \cdot \prod_{i=1}^{\ell} T^{[\tilde{a}_i(\uh,n)]+e_{i,N}(\uh)} f_{N,\uh,i}}_{L^2(\mu)},
$$
where $(c_{N,\uh}(n))$ is another $1$-bounded sequence, $(e_{1,N}(\uh)), \ldots, (e_{\ell,N}(\uh))$ are sequences of real numbers, and
	\begin{equation}\label{E:subl1}
\tilde{a}_i(\uh,t):=p_{i,r}(\uh)\,t+ q_i(\uh,t), \quad \text{where} \quad q_i(\uh,t):=\sum_{j=1}^{r-1} p_{i,j}(\uh) \, t^{\frac{d_j}{d}}, \quad i=1,\ldots, \ell.
\end{equation}
After composing with $T^{-e_{1,N}(\uh)}$ and redefining the functions $f_{N,\uh,i}$, $i=2,\ldots, \ell$, we are reduced to the case where $e_{i,N}(\uh)=0$ for $i=1,\ldots, \ell$. So we only treat this case henceforth.
We also remark that since the collection $a_1,\ldots, a_\ell$ is nice, and $\tilde{a}_i(\uh,t)=a_i(\uh,t^{1/d})$, $i=1,\ldots, \ell$,  the collection $\tilde{a}_1,\ldots, \tilde{a}_\ell$ is also nice.
\smallskip

{\bf Step 2} (Reduction of $\ell$ via vdC).
Applying \eqref{E:VDC1} for  the average over $n$, we get that  it suffices to obtain an upper bound for the following averages
$$
\E_{(\uh, h_{k+1})\in [L_N]^{k+1}}		\E_{n\in [N^{ad}]}\Big| \int \prod_{i=1}^\ell T^{[\tilde{a}_i(\uh,n+h_{k+1})]} f_{N,\uh,i}\, \prod_{i=1}^\ell T^{[\tilde{a}_i(\uh,n)]} \overline{f}_{N,\uh,i}\,
\,
d\mu\Big|.
$$

We compose with $T^{-[\tilde{a}_1(\uh,n)]}$ and for $i=1,\ldots, \ell$ we  replace the differences $[\tilde{a}_i(\uh,n+h_{k+1})]-[\tilde{a}_1(\uh,n)]$,
$[\tilde{a}_i(\uh,n)]-[\tilde{a}_1(\uh,n)]$ with $[\tilde{a}_i(\uh,n+h_{k+1})-\tilde{a}_1(\uh,n)]$, $[\tilde{a}_i(\uh,n)-\tilde{a}_1(\uh,n)]$, respectively.
To do so, we have to introduce some error sequences that take values on a finite subset of $\N$.  We use Lemma~\ref{L:errors} to treat the errors  that arise and  we are left with upper-bounding  averages of the form
\begin{multline*}
\E_{(\uh, h_{k+1})\in [L_N]^{k+1}}		\E_{n\in [N^{ad}]}\Big| \int \prod_{i=1}^\ell T^{[\tilde{a}_i(\uh,n+h_{k+1})-\tilde{a}_1(\uh,n)]+\epsilon_{i,N}} f_{N,\uh,i}\cdot
\\ \prod_{i=1}^\ell T^{[\tilde{a}_i(\uh,n)-\tilde{a}_1(\uh,n)]+\epsilon'_{i,N} } \overline{f}_{N,\uh,i}\,
\,
d\mu\Big|,
\end{multline*}
where $\epsilon_{i,N},\epsilon'_{i,N}$, $i=1,\ldots, \ell$, take finitely many values for $N\in\N$.
Note  that the fractional degree of $q_1, \ldots, q_\ell$ is strictly smaller than $1$. It follows from this and the mean value theorem that
\begin{equation}\label{E:subl2}
\lim_{N\to\infty}\max_{(\uh,h_k)\in [L_N]^{k+1}, i\in \{1,\ldots, \ell\}}|q_i(\uh,t+h_{k+1})-q_i(\uh,t)|=0.
\end{equation}
 Using \eqref{E:subl1} and \eqref{E:subl2}, and  then  Lemma~\ref{L:errors}, we get that it suffices to get an upper bound for the averages
\begin{multline*}	
\E_{(\uh,h_{k+1})\in [L_N]^{k+1}}	\E_{n\in [N^{ad}]}\,  c_{N,\uh,h_{k+1}}(n) \cdot \int
(T^{[p_{1,r}(\uh)h_{k+1}]+\epsilon_{1,N} }f_{N,\uh,1}\cdot \overline{f}_{N,\uh,1})\cdot \\ \prod_{i=2}^\ell T^{[b_i(\uh,n)]+\epsilon_{i,N} }\tilde{f}_{N,\uh,h_{k+1},i}\, d\mu,
\end{multline*}
where $\epsilon_{1,N},\ldots, \epsilon_{\ell,N}$ take finitely many values for $N\in\N$,
$$
b_{i}(\uh,t):=(p_{i,r}-p_{1,r})(\uh)\, t+(q_{i}-q_1)(\uh,t), \quad i=2,\ldots, \ell,
$$
  $\tilde{f}_{N,\uh,h_{k+1},i}\in L^\infty(\mu)$, $i=2,\ldots, \ell$, are  $1$-bounded functions,   and
 $(c_{N,\uh,h_{k+1}}(n))$ is a $1$-bounded sequence.
 Without loss of generality we can assume that  $b_\ell$ has maximal fractional degree within the collection $b_2,\ldots, b_\ell$ (note that some of the polynomials $p_{i,r}-p_{1,r}$ may vanish).
 We compose with $T^{-[b_\ell(\uh,n)]}$ and  apply Lemma~\ref{L:errors} to treat  finite-valued error sequences that we get when we replace
differences of integer parts with the integer part of the corresponding differences. After using the Cauchy-Schwarz inequality we deduce that it suffices to get an upper bound for the following averages
\begin{equation}\label{E:ready}
\E_{h_{k+1}\in [L_N]} \Big(\E_{\uh\in [L_N]^k} \norm{	\E_{n\in [N^{ad}]} \, c_{N,\uh,h_{k+1}}(n) \,  \prod_{i=1}^{\ell-1} T^{[\tilde{b}_i(\uh,n)]+\epsilon'_{i,N}}\tilde{f}_{N,\uh,h_{k+1},i}}_{L^2(\mu)}\Big),
\end{equation}
where $ \epsilon'_{1,N},\ldots, \epsilon'_{\ell-1,N}$ take finitely many values for $N\in\N$,
$$
\tilde{b}_{i}(\uh,t):=(p_{i,r}-p_{\ell,r})(\uh)\, t+(q_{i}-q_\ell)(\uh,t), \quad i=1,\ldots, \ell-1,
$$
and
\begin{equation}\label{E:replace}
\tilde{f}_{N,\uh,h_{k+1},1}:=
T^{[p_{1,r}(\uh)h_{k+1}]+\epsilon_{1,N}}f_{N,\uh, 1}\cdot \overline{f}_{N,\uh, 1},
\end{equation}
where $ \epsilon_{1,N}$ takes finitely many values for $N\in\N$.

Note that our assumptions imply that  $\tilde{b}_{1},\ldots,  \tilde{b}_{\ell-1}$, thought of as a collection  of polynomials  with real exponents and $(k+1)$-parameters, is nice.
\smallskip

{\bf Step 3} (Applying the induction hypothesis).  Using the induction hypothesis  for the expression in \eqref{E:ready} that is inside the parentheses, and the fact that $\tilde{b}_1,\ldots, \tilde{b}_{\ell-1}$ do not depend on the parameter $h_{k+1}$,
we get that
there exist $l,r\in \N$ and  non-constant  polynomials $P_1,\ldots, P_r\in \R[t_1,\ldots, t_{k+l}]$ with pairwise non-constant differences,    such that   the averages in  
\eqref{E:ready} are bounded
 by an $o_N(1)$ term plus a constant $C_{k,a_1,\ldots, a_\ell}$
(note that $\tilde{b}_{1},\ldots,  \tilde{b}_{\ell-1}$ are determined by $a_1,\ldots, a_\ell$) times the expression
$$
\E_{(\uh_1,h_{k+1})\in [L_N]^{k+1},\uh_2\in [L_N]^l}\Big| \int \prod_{i=0}^{r} T^{[P_i(\uh_1,\uh_2)]+\epsilon'_{i,N}}
	F_{N,\uh_1, h_{k+1},i}\, d\mu\Big|,
$$
where $P_0:=0$, $ F_{N,\uh,h_{k+1},i} \in \{ \tilde{f}_{N,\uh,h_{k+1},1}, \overline{\tilde{f}}_{N,\uh,h_{k+1},1} \}$ for $i=0,\ldots, r$,  $\uh_1\in [L_N]^k$, $h_{k+1}\in [L_N]$, $N\in \N$,
and $\epsilon'_{0,N},\ldots, \epsilon'_{r,N}$ take finitely many values for $N\in\N$.

Using \eqref{E:replace} and Lemma~\ref{L:errors}, we can bound this expression by a constant $C_{r}$ times
the  following average
\begin{multline*}
	\E_{(\uh_1,h_{k+1})\in [L_N]^{k+1},\uh_2\in [L_N]^l} \Big|\int f_{N,\uh_1, 1} \cdot T^{[p_{1,r}(\uh_1)h_{k+1}]+\epsilon'_{0,N}}\overline{f}_{N,\uh_1, 1}\cdot\\
			 \prod_{i=1}^{r} \big(
			T^{[P_i(\uh_1,\uh_2)+p_{1,r}(\uh_1)h_{k+1}]+\epsilon'_{i,N}} G_{N,\uh_1,h_{k+1},i}\cdot T^{[P_i(\uh_1,\uh_2)]+\epsilon'_{r+i,N}}  G_{N,\uh_1,h_{k+1},r+i}\big)\, d\mu\Big|,
\end{multline*}
where for $i=1,\ldots, 2r$ we have  $G_{N,\uh_1,h_{k+1},i}  \in \{ f_{N,\uh_1,1}, \overline{f}_{N,\uh_1,1} \}$,
$\uh_1 \in [L_N]^k$, $h_{k+1}\in [L_N]$, $N\in\N$, and  $\epsilon'_{i,N}$, $i=0,\ldots, 2r$, take finitely many values for $N\in\N$.
Since the polynomial $p_{1,r}$ is non-zero and the polynomials $P_1,\ldots, P_r$ with $k+l$ variables are non-constant and have non-constant pairwise differences, the same holds for the $2r+1$ polynomials with   $k+l+1$ variables
$p_{1,r}(\uh_1)h_{k+1}$, $P_i(\uh_1,\uh_2)+p_{1,r}(\uh_1)h_{k+1}$, $P_i(\uh_1,\uh_2)$, $i=1,\ldots, r$.  This completes the proof.
\end{proof}

\subsection{Averages with polynomial iterates}
Lemma~\ref{L:11} and Lemma~\ref{L:12} show that in the case of iterates with  sublinear growth,  to get good seminorm estimates for the averages in Theorem~\ref{T:CF'}, it suffices to study averages with iterates given by polynomials in $\R[t_1,\ldots, t_k]$ for some $k\in\N$. This is the context of the next  result.
\begin{lemma}\label{L:13}
Let $k,r\in\N$ and $P_1,\ldots, P_r\in\R[t_1,\ldots, t_k]$ be non-constant polynomials with pairwise non-constant differences. Then there exists $s\in \N$ such that the following holds: If $(X,\mu,T)$ is an ergodic  system and $f_1,\ldots, f_r\in L^\infty(\mu)$ are such that $\nnorm{f_i}_s=0$ for some $i\in \{1,\ldots, r\}$, then for every $1$-bounded sequence $(c_N(\uh))$,  we have
$$
\lim_{N\to\infty} \E_{\uh\in [N]^k}\, c_{N}(\uh)\cdot \prod_{i=1}^{r} T^{[P_i(\uh)]} f_i=0
$$
in $L^2(\mu)$.
\end{lemma}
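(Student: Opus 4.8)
The plan is to prove Lemma~\ref{L:13} by a PET-induction on the complexity of the family $P_1,\ldots,P_r$, implemented via the bounded van der Corput inequality \eqref{E:VDC2}. The starting observation is that the averages here have iterates given by genuine polynomials in $\R[t_1,\ldots,t_k]$ evaluated at integer points, so replacing $[P_i(\uh)]$ by $P_i(\uh)$ introduces only error sequences in $\{0,1\}^k$-shifts which can be absorbed into the weight $(c_N(\uh))$ using Lemma~\ref{L:errors}; modulo this, we are dealing with the integer-polynomial situation essentially treated in \cite{L05}. Since the family has pairwise non-constant differences and each $P_i$ is non-constant, one may split the vector variable $\uh$ into a distinguished coordinate along which the PET scheme is run.

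First I would set up the induction. Assume, after relabeling, that $P_1$ is (one of) the polynomials of highest degree in the chosen distinguished variable, say $t_k$. Write $\uh=(\uh',h_k)$ with $\uh'\in[N]^{k-1}$, $h_k\in[N]$. Apply \eqref{E:VDC2} to the average over $h_k$ (with the remaining averages and the $L^2(\mu)$-norm folded into the ambient Hilbert space, or by a direct Cauchy--Schwarz in $\uh'$ first). After composing with $T^{-[P_1(\uh',h_k)]}$ and passing, via Lemma~\ref{L:errors}, from differences of integer parts to integer parts of differences, one is left with averages over $\uh'$, $h_k$ and a new shift variable $m$ of products $\prod_i T^{[P_i(\uh',h_k+m)-P_1(\uh',h_k)]}F_i \cdot \prod_i T^{[P_i(\uh',h_k)-P_1(\uh',h_k)]}\overline F_i$. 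The key point is that in this new family of $2r-1$ polynomials (the $P_1-P_1$ term vanishes) the ``weight vector'' in the PET sense has strictly decreased: the top-degree-in-$t_k$ polynomial $P_1$ has been cancelled, so the new system is lower in the PET ordering, while the non-constancy of all pairwise differences of the new polynomials is inherited from the hypothesis on $P_1,\ldots,P_r$ together with a standard leading-term computation. One then applies the induction hypothesis to conclude that $\nnorm{f_1}_s=0$ forces the new average to $0$, hence the original one does too.

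The base case of the PET induction is when the family, after enough vdC steps, consists of polynomials that are (up to lower-order terms in the distinguished variable) linear in that variable with non-constant coefficients; here the averages reduce, after one more vdC and a change of variables, to averages whose iterates are, along the relevant variable, honestly of the form $cn+d$ with $c\ne0$, and standard arguments (mean ergodic theorem / spectral theorem, or an appeal to the single-variable linear case) give that a single vanishing $\nnorm{\cdot}_s$-seminorm — in fact $\nnorm{\cdot}_2$ or a bounded number of iterations of it — suffices. The uniformity of $s$ over the system (rather than system-dependent, as in the weaker Property~(i) of ``good for seminorm estimates'') comes out of the fact that the number of vdC steps and the arithmetic of the leading coefficients are determined purely by $P_1,\ldots,P_r$, not by $(X,\mu,T)$.

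I expect the main obstacle to be bookkeeping the PET ordering correctly: one must choose a total order on finite multisets of polynomials (ordered first by the degree in the distinguished variable of the maximal element, then by how many polynomials achieve that degree, then recursively on the lower-order structure, as in \cite{B87,L05,WZ11}) for which the vdC-operation I described is strictly decreasing, and simultaneously verify that the ``non-constant pairwise differences'' hypothesis is preserved at each step — this requires a small but genuine leading-coefficient argument, because cancellation of the top term of $P_1$ could in principle create a coincidence among lower-order terms. Handling the weights $(c_N(\uh))$ is routine (they stay $1$-bounded after each Cauchy--Schwarz and each application of Lemma~\ref{L:errors}), and the integer-part errors are routine for the same reason, so the real content is purely the combinatorial descent on the polynomial family, which is exactly where the argument of \cite{L05} is imported.
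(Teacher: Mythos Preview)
Your approach is the same as the paper's: reduce to \cite{L05} via PET-induction using \eqref{E:VDC2}, with Lemma~\ref{L:errors} absorbing the integer-part errors into the bounded weights. The inductive step you describe is fine and matches the paper's sketch.

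The gap is in your base case. You write that once the iterates are linear in the distinguished variable, ``standard arguments (mean ergodic theorem / spectral theorem, or an appeal to the single-variable linear case)'' finish the job. This is the one place where the paper has to deviate from \cite{L05}, and it is \emph{not} routine: when the linear form is $L(\uh)=\sum_{j=1}^k \alpha_j h_j$ with real (possibly irrational) $\alpha_j$, one needs the estimate
\[
\limsup_{N\to\infty}\E_{\uh\in[N]^k}\nnorm{g\cdot T^{[L(\uh)]}f}_s^{2^s}\le C_L\,\nnorm{f}_{s+1}^{2^{s+1}},
\]
and this does not follow from Lemma~\ref{L:errors} plus the mean ergodic theorem. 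The paper obtains it by first replacing $[\sum_j\alpha_j h_j]$ with $\sum_j[\alpha_j h_j]$ (via Lemma~\ref{L:errors}) and then observing that the range $\{([\alpha_1 h_1],\ldots,[\alpha_k h_k]):\uh\in\N^k\}$ has \emph{bounded multiplicity and positive density} in $\N^k$; only then can one compare to the genuine integer-shift average $\E_{\uh}\nnorm{g\cdot T^{h_1+\cdots+h_k}f}_s^{2^s}$, which is controlled by $\nnorm{f}_{s+1}$ via \cite[Lemma~8]{L05}. Your sketch does not mention this step, and without it the linear base case for real coefficients does not close. (Also, your opening sentence about ``replacing $[P_i(\uh)]$ by $P_i(\uh)$'' is not meaningful as written, since $T^{P_i(\uh)}$ is undefined for non-integer exponents; presumably you mean that integer-part errors are handled by Lemma~\ref{L:errors}, which is correct.)
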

\begin{proof}
	The argument is similar to the one used to prove \cite[Theorem~1]{L05} where the case of  polynomials with  integer coefficients  and $c_{N}(\uh):=1$ is covered, so we only sketch the points in the argument where one has to  deviate slightly because of minor technical complications. The proof proceeds by  induction on
	a certain vector, called the weight, that is associated to each polynomial family $P_1,\ldots, P_r$  in $\R[t_1,\ldots, t_k]$.

		The inductive step is carried out by using a variant of Lemma~\ref{L:VDC} in the form used in \eqref{E:VDC2} that  concerns  averages over $[N]^k$ (see \cite[Lemma~4]{L05} for the precise statement). The argument applies verbatim in our case, the only change is that we need at various instances to replace the differences of the integer part of polynomials with the integer part of their differences; we do this with the help of Lemma~\ref{L:errors} and  the use of the constants $(c_N(\uh))$ facilitates this task.
	
	 The base case of the induction is the case
	where all the polynomials are linear with respect to all   variables involved. This case is covered using another induction, this time  on the number $r$ of linear functions. The inductive step is proved using \cite[Lemma~4]{L05}. The only difference in our case, versus the argument used in \cite[Proposition~5]{L05}, appears in the proof of the estimate
	\begin{equation}\label{E:L}
	\limsup_{N\to\infty}\E_{\uh\in [N]^k}\nnorm{g\cdot T^{[L(\uh)]}f}^{2^s}_{s}\leq
	C_L\, \nnorm{f}^{2^{s+1}}_{s+1},
	\end{equation}
for some $C_{L}>0$, where $f,g\in L^\infty(\mu)$ and $L(\uh)=\sum_{j=1}^k\alpha_j h_j$, for some $k\in\N$ and $\alpha_1,\ldots, \alpha_k\in \R$. To obtain this bound,  we first use Lemma~\ref{L:errors}  to show that it suffices to  replace
$[\sum_{j=1}^k\alpha_j h_j]$ with $\sum_{j=1}^k[\alpha_j h_j]$, and  we  remark that the set
$$
\{([\alpha_1 h_1],\ldots, [\alpha_k h_k])\colon (h_1,\ldots, h_k)\in \N^k\}
$$ has bounded multiplicity
and positive density (as a subset of $\N^k$). It follows that there exists $C_L>0$ such that
$$
\limsup_{N\to\infty}\E_{\uh\in [N]^k}\nnorm{g\cdot T^{\sum_{j=1}^k[\alpha_j h_j]}f}^{2^s}_{s}\leq
C_L\,  \limsup_{N\to\infty}\E_{\uh\in [N]^k}\nnorm{g\cdot T^{\sum_{j=1}^k h_j}f}^{2^s}_{s}.
$$	
By \cite[Lemma~8]{L05}, the last expression is bounded by a constant multiple of $\nnorm{f}^{2^{s+1}}_{s+1}$. Combining the above we  get that \eqref{E:L} holds.
Finally, the base case of the induction (of the linear case) is when  $r=1$ and $P_1=L$ is linear.  To cover this case,  we again use  \cite[Lemma~4]{L05}
 and  reduce matters to the task of obtaining  an upper bound for the expression
$$
 \limsup_{N\to\infty}\E_{\uh\in [N]^k}\Big|\int \overline{f}\cdot T^{L(\uh)}f\, d\mu\Big|.
 $$	
 By the $s=1$ case of \eqref{E:L} (recall that $\nnorm{f}_1=|\int f\, d\mu|$) we get an upper bound by $C_L\nnorm{f}_2^2$ for some $C_L>0$. This completes the proof.
	\end{proof}


\subsection{Proof of Theorem~\ref{T:CF'} in the sublinear case}
We are now ready to combine the ingredients of the previous subsections  to complete the goal of this section, which is to prove the following result:
\begin{proposition}\label{P:base}
Theorem~\ref{T:CF'} holds in the case where all   $a_1,\ldots, a_\ell$ have fractional degree smaller than one.
\end{proposition}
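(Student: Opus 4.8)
The plan is to assemble the three lemmas of this subsection in order: Lemma~\ref{L:11} removes the unbounded weight $\Delta_\uh\Lambda'$ and performs the substitution $n\mapsto n^{1/d}$; Lemma~\ref{L:12} eliminates the (now $1$-bounded) iterates one at a time, producing averages whose iterates are honest polynomials with real coefficients in several parameters; and Lemma~\ref{L:13} supplies the seminorm estimate for the latter. So let $a_1,\dots,a_\ell$ be a nice collection of fractional polynomials with $k$-parameters, all of fractional degree $<1$, and let $(c_{N,\uh}(n))$ and the functions $f_{N,\uh,i}$ (with $f_{N,\uh,1}=f_1$, the system ergodic) be as in Theorem~\ref{T:CF'}. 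By niceness $d:=\fdeg(a_1)=\max_i\fdeg(a_i)$, and since $a_1$ is non-constant in $t$ we have $d\in(0,1)$, which is exactly the standing hypothesis of Lemma~\ref{L:11}. First I would apply Lemma~\ref{L:11} with $a=1$; since the weight in \eqref{E:mainav} equals $(\Delta_\uh\Lambda')(n)\cdot c_{N,\uh}(n)$, the lemma bounds the quantity in \eqref{E:mainav}, up to $o_N(1)$ and a constant depending only on $k$ and $a_1,\dots,a_\ell$, by
$$
\E_{\uh\in[L_N]^k}\Big\|\E_{n\in[N^{d}]}\,z_{N,\uh}(n)\cdot\prod_{i=1}^\ell T^{[\widehat a_i(\uh,n)+e_{i,N}(\uh)]}f_{N,\uh,i}\Big\|_{L^2(\mu)},
$$
where $(z_{N,\uh}(n))$ is $1$-bounded, the $(e_{i,N}(\uh))$ are arbitrary real sequences, and $\widehat a_i(\uh,n):=a_i(\uh,n^{1/d})$. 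Each $\widehat a_i$ is a polynomial with real exponents and $k$-parameters of fractional degree $\fdeg(a_i)/d\le 1$, and since $t\mapsto t^{1/d}$ is a monotone bijection of $\R_+$, the collection $\widehat a_1,\dots,\widehat a_\ell$ is again nice.

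The weight being now $1$-bounded, I would feed $\widehat a_1,\dots,\widehat a_\ell$ into Lemma~\ref{L:12} (with its parameter equal to $d$). This produces $l,r\in\N$ and non-constant polynomials $P_1,\dots,P_r\in\R[t_1,\dots,t_{k+l}]$ with pairwise non-constant differences, and bounds the preceding display --- again up to $o_N(1)$ and a constant depending only on $a_1,\dots,a_\ell$ --- by
$$
\E_{\uh_1\in[L_N]^k,\,\uh_2\in[L_N]^l}\Big|\int\prod_{i=0}^{r}T^{[P_i(\uh_1,\uh_2)]+\epsilon_{i,N}}F_{N,\uh_1,i}\,d\mu\Big|,
$$
where $P_0=0$, the $\epsilon_{i,N}$ take finitely many integer values, and each $F_{N,\uh_1,i}\in\{f_{N,\uh_1,1},\overline f_{N,\uh_1,1}\}=\{f_1,\overline f_1\}$ (here I use the hypothesis $f_{N,\uh,1}=f_1$). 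It remains to massage this into the exact form required by Lemma~\ref{L:13}. Since $P_0=0$, the $i=0$ factor is a fixed power of $T$; composing the integrand with $T^{-\epsilon_{0,N}}$ and absorbing the integers $\epsilon_{i,N}-\epsilon_{0,N}$ into $P_i$ (which keeps the family non-constant with pairwise non-constant differences), and then splitting over the finitely many possible values of these shifts and over the finitely many conjugation patterns $(F_{N,\cdot,i})_{i=0}^r\in\{f_1,\overline f_1\}^{r+1}$ (absorbing the indicator of each pattern into the weight), it suffices to bound finitely many averages $\E_{\uh\in[L_N]^{k+l}}\big|\int F_0\cdot\prod_{i=1}^{r}T^{[\widetilde P_i(\uh)]}F_i\,d\mu\big|$ with $F_0,\dots,F_r\in\{f_1,\overline f_1\}$ fixed and $\widetilde P_1,\dots,\widetilde P_r\in\R[t_1,\dots,t_{k+l}]$ non-constant with pairwise non-constant differences.

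For each such average, writing the modulus of the integral as $\int F_0\cdot\big(\overline\theta(\uh)\prod_{i=1}^r T^{[\widetilde P_i(\uh)]}F_i\big)\,d\mu$ for a suitable unimodular scalar $\theta(\uh)$, averaging over $\uh$, and applying the Cauchy--Schwarz inequality, I would bound it by $\|f_1\|_{L^\infty(\mu)}$ times $\big\|\E_{\uh\in[L_N]^{k+l}}\overline\theta(\uh)\prod_{i=1}^r T^{[\widetilde P_i(\uh)]}F_i\big\|_{L^2(\mu)}$, in which $\overline\theta$ is a $1$-bounded weight. This is precisely the quantity controlled by Lemma~\ref{L:13}: applied over the boxes $[L_N]^{k+l}$ (legitimate since $L_N\to\infty$) it yields an $s\in\N$, depending only on $P_1,\dots,P_r$ and hence only on $a_1,\dots,a_\ell$, such that $\nnorm{f_1}_s=\nnorm{\overline f_1}_s=0$ forces each of these averages to converge to $0$ in $L^2(\mu)$. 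Tracing the estimates back through the previous two steps then shows that the average in \eqref{E:mainav} tends to $0$, which is Theorem~\ref{T:CF'} in the regime where all iterates are sublinear, i.e.\ Proposition~\ref{P:base}.

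All the genuine content is already contained in Lemmas~\ref{L:11}--\ref{L:13}, so I do not anticipate a serious obstacle here; the only points that need attention are checking that niceness is preserved under the substitution $n\mapsto n^{1/d}$ in the first step, and the bookkeeping in the last two steps --- handling the inert iterate $P_0=0$, the finitely-valued shifts $\epsilon_{i,N}$, and the complex conjugates --- so as to land exactly on the hypotheses of Lemma~\ref{L:13}. In particular one should be careful that the resulting integer $s$ is uniform in the system and in the functions, which it is, being extracted by Lemma~\ref{L:13} from the polynomial data $P_1,\dots,P_r$ alone.
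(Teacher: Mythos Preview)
Your proposal is correct and follows essentially the same route as the paper: apply Lemma~\ref{L:11} to remove the unbounded weight and linearise the top exponent, then Lemma~\ref{L:12} to reduce to polynomial iterates, then split over the finitely many shift values and conjugation patterns and invoke Lemma~\ref{L:13} via Cauchy--Schwarz. The only differences are cosmetic --- you spell out the two lemma applications separately and make the final Cauchy--Schwarz step explicit, whereas the paper compresses these into single sentences.
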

\begin{proof}	
Combining Lemma~\ref{L:11} and Lemma~\ref{L:12} (for $f_{N,\uh,1}:=f_1$, $N\in\N, \uh\in\N^k$) we get that
there exist $k,r\in \N$ and non-constant polynomials $P_1,\ldots, P_r\in\R[t_1,\ldots, t_{k}]$, with pairwise non-constant differences, such that  the averages \eqref{E:mainav} are bounded by an $o_N(1)$ term plus a constant multiple of
$$
\E_{\uh\in [L_N]^{k}}\Big|\int \prod_{i=0}^{r} T^{[P_i(\uh)]+ \epsilon_{i,N}} F_{i,\uh}\, d\mu\Big|,
$$
where $P_0:=0$,  $F_{0,\uh},\ldots, F_{r,\uh}\in \{f_1,\overline{f}_1\}$, $\uh\in\N^k$,  and the sequences $\epsilon_{0,N},\ldots, \epsilon_{r,N}$ take values on a finite subset $S$ of $\Z$ for $N\in\N$.
Since the limsup as $N\to \infty$ of the previous average is bounded by
$$
\sum_{\epsilon_0,\ldots, \epsilon_r\in S,\,  F_0,\ldots, F_r\in \{f_1,\overline{f}_1\}}\limsup_{N\to\infty}\Big(\E_{\uh\in [L_N]^{k}}\Big|\int \prod_{i=0}^{r} T^{[P_i(\uh)]+ \epsilon_{i}} F_{ i}\, d\mu\Big|\Big),
$$
 it suffices to show that for all fixed  $\epsilon_0,\ldots, \epsilon_r\in \Z$ and $F_{0},\ldots, F_{r}\in \{f_1,\overline{f}_1\}$, we have
$$
\lim_{N\to\infty}\E_{\uh\in [L_N]^{k}}\Big|\int \prod_{i=0}^{r} T^{[P_i(\uh)]+ \epsilon_{i}} F_{ i}\, d\mu\Big|=0.
$$
The last average  is equal to
$$
\E_{\uh\in [L_N]^{k}}\,  c_N(\uh)\cdot \int \prod_{i=0}^{r} T^{[P_i(\uh)+\epsilon_i]} F_{ i}\, d\mu
$$
 for some  $1$-bounded sequence $(c_N(\uh))$.
The result now  follows from Lemma~\ref{L:13}.
\end{proof}

\section{Seminorm estimates - induction step}
The goal of this section is to finish the proof of  Theorem~\ref{T:CF'} using a PET-induction argument.
The  basis of the induction was covered in the previous section and the induction step will be carried out in this section.
\subsection{An example} \label{Ex2}
To better illustrate our method,  we first  explain the details in a  simple case. We take   $\ell=2$ and $a_1(t):=t^{1.5}, a_2(t)=t^{1.5}+ t^{1.1}$, $t\in \R_+$. Then $\{a_1,a_2\}$ is a nice family and our aim is to show that if $\nnorm{f_1}_s=0$ for some $s\in \N$,   then
$$
\lim_{N\to\infty}		\E_{n\in [N]}\,  w_N(n) \cdot T^{[n^{1.5}]}f_1\cdot T^{[n^{1.5}+n^{1.1}]}f_2=0,
$$
where $w_N(n)=\Lambda'(n)\cdot c_N(n)$ for some $1$-bounded sequence
$(c_N(n))$.

We start by using 
\eqref{E:VDC1}, compose with $T^{-[n^{1.5}+n^{1.1}]}$, use Lemma~\ref{L:errors}    to dispose the error sequence  that arises when  we replace the difference of integer parts with the integer part of the difference, and use the Cauchy-Schwarz inequality.  We deduce  that  it suffices to prove convergence to zero of   the averages
\begin{multline*}
\E_{h_1\in [L_N]}\Big|\Big|\E_{n\in [N]} \,   w_{N,h_1}(n) \cdot
T^{[(n+h_1)^{1.5}-n^{1.5}-n^{1.1}]}f_1\cdot \\ T^{[(n+h_1)^{1.5}+(n+h_1)^{1.1}-n^{1.5}-n^{1.1}]}f_2 \cdot
T^{[-n^{1.1}]}\overline{f}_1\Big|\Big|_{L^2(\mu)},
\end{multline*}
where
$w_{N,h_1}(n):=
(\Delta_{h_1}\Lambda')(n)\cdot  c_{N,h_1}(n)$ for some  $1$-bounded sequence  $(c_{N,h_1}(n))$.
Using the mean value theorem and Lemma~\ref{L:errors}, we get that for  the range of $h_1,n$ we are working with, we can replace
$(n+h_1)^{1.5}-n^{1.5}$ with $1.5\, h_1n^{0.5}$ and $(n+h_1)^{1.1}-n^{1.1}$ with $1.1\, h_1n^{0.1}$, which for notational simplicity we replace with $h_1n^{0.5}$ and $h_1n^{0.1}$ respectively. We thus arrive to the problem of proving convergence to
zero of the averages
$$
\E_{h_1\in [L_N]}\norm{\E_{n\in [N]} \, w_{N,h_1}(n)\cdot
	T^{[-n^{1.1}+h_1n^{0.5}]}f_1\cdot T^{[h_1n^{0.5}+h_1 n^{0.1}]}f_2 \cdot
	T^{[-n^{1.1}]}\overline{f}_1}_{L^2(\mu)}.
$$
Performing one more time the previous operation (we compose with $T^{-[h_1n^{0.5}+h_1 n^{0.1}]}$ after  applying  \eqref{E:VDC1}) we arrive in a similar fashion
at the following averages
\begin{multline*}
\E_{h_1,h_2\in [L_N]}\Big|\Big|\E_{n\in [N]} \,w_{N,h_1,h_2}(n) \cdot
 T^{[-n^{1.1}-h_2 n^{0.1}]}f_1  \cdot
 T^{[-n^{1.1}-h_1n^{0.5}-(h_1+h_2) n^{0.1}]}\overline{f}_1\cdot \\
	T^{[-n^{1.1}-h_1 n^{0.1}]}\overline{f}_1 \cdot
	T^{[-n^{1.1}-h_1n^{0.5}-h_1 n^{0.1}]}f_1\Big|\Big|_{L^2(\mu)},
\end{multline*}
where   $w_{N,h_1,h_2}(n):= (\Delta_{h_1,h_2}\Lambda')(n)\cdot c_{N,h_1,h_2}(n)$ for some $1$-bounded sequence $(c_{N,h_1,h_2}(n))$.   After one more iteration of the previous operation  (this time we  compose with the transformation $T^{[n^{1.1}+h_1n^{0.5}+h_1 n^{0.1}]}$ after applying \eqref{E:VDC1}) we arrive at the averages
\begin{multline*}
\E_{h_1,h_2,h_3\in [L_N]}
\Big|\Big| \E_{n\in [N]}\,   w_{N,h_1,h_2,h_3}(n) \cdot  \
 T^{[(h_1-h_2-h_3) n^{0.1}+h_1n^{0.5}]}f_1  \cdot
T^{[-(h_2+h_3) n^{0.1}]}\overline{f}_1\cdot \\
T^{[-h_3 n^{0.1}+h_1n^{0.5}]}\overline{f}_1 \cdot
T^{[-h_3 n^{0.1}]}f_1\cdot
T^{[(h_1-h_2) n^{0.1}+h_1n^{0.5}]}\bar{f}_1  \cdot
T^{[-h_2 n^{0.1}]}f_1\cdot
T^{[h_1n^{0.5}]}f_1
 \Big|\Big|_{L^2(\mu)},
\end{multline*}
where
  $w_{N,h_1,h_2,h_3}(n):= (\Delta_{h_1,h_2,h_3}\Lambda')(n)\cdot c_{N,h_1,h_2,h_3}(n)$
  for some $1$-bounded   sequence $(c_{N,h_1,h_2,h_3}(n))$.
 We have now reduced to the case of fractional polynomials with $3$-parameters and fractional degree smaller than $1$. This case was dealt  in the previous section, where we showed in Proposition~\ref{P:base} that there exists $s\in \N$ such that if $\nnorm{f_1}_s=0$, then the last averages converge to zero as $N\to\infty$.

\subsection{The van der Corput operation and reduction of type}\label{SS:VDC}
 In this subsection we define the type of a family of polynomials with real exponents
 and  finitely many parameters and the van der Corput  operation that reduces the type.
\begin{definition}
We say that two   polynomials  $a, b$ with real exponents and finitely many parameters are {\em equivalent}, and
write   $a\cong b$, if the (integral) degree of  $a- b$ is strictly smaller than the degree of  $a$ and  $b$.\footnote{We do not choose to identify functions with the same fractional degree because if we did so, then  the vdC operation that will be described shortly
	would not necessarily lead to families with smaller type (see the example given after the relevant definition).}

We define the {\em type} of  a family $a_1,\ldots, a_\ell$ of  polynomials with real exponents and  finitely many parameters to be  the vector that consists of the maximal degree  $d$ of the family (in the first coordinate) and the number of non-equivalent classes of degree $d$, $d-1$,$\ldots$, $0$ in the other coordinates (we ignore polynomials that are identically $0$).

We order the set of all possible types lexicographically; meaning, $(d, k_d,\ldots, k_0)>(d', k_d',\ldots, k_0')$ if and only if in the first instance where the two vectors disagree the coordinate of the first vector is larger than the coordinate of the second vector.
\end{definition}
We caution the reader that $t^{2.5}\not\cong t^{2.5}+t^{2.1}$ (but  $t^{2.5}\cong t^{2.5}+t^{1.1}$).
Also   if  $a_1(h,t)=ht^{2.5}+h^2t^{2.1}$,  $a_2(h,t)=ht^{2.5}$,  $a_3(h,t)=ht^{2.5}+h^2t^{2.1}+ht^{1.5}$, $a_4(h,t)=t^{0.5}$,  then $a_1\not\cong a_2$, $a_2 \not\cong a_3$, $a_1\cong a_3$
and the family $a_1,a_2,a_3, a_4$ has type $(2,2,0,1)$.

Recall that
$L_N=[e^{\sqrt{\log{N}}}]$, $N\in\N$.  We  introduce a class of
sequences that often occur as errors that can be  eliminated using Lemma~\ref{L:errors}.
\begin{definition}
We say that $e\colon \N^k\times \R_+\to \R$ is {\em negligible} if
	$$
\lim_{N\to\infty} \max_{\uh\in [L_N]^k, t\in [\sqrt{N},N]}|e(\uh,t)|=0.
$$
\end{definition}
If $a(t)$ is a fractional polynomial, then $a(t+c)$ is also a fractional polynomial modulo negligible terms. This is the context of the next lemma, which is proved in a more general form that is better suited for our purposes.
\begin{lemma}\label{L:shifts}
	Let $a(\uh,t)$ be a polynomial with real exponents and $k$-parameters and degree $d$. Then
	modulo negligible terms,	$a(\uh,t+h_{k+1})$ is  a polynomial with real exponents and $(k+1)$-parameters. In fact, we have
	\begin{equation}\label{E:tildea}
	a(\uh,t+h_{k+1})=	\tilde{a}(\uh,h_{k+1},t)+e(\uh,h_{k+1},t),
	\end{equation}
	where (below $a^{(j)}$ denotes the $j$-th derivative of $a$ with  respect to the variable $t$)
	\begin{equation}\label{E:approx}
	\tilde{a}(\uh,h_{k+1},t):=\sum_{j=0}^d \frac{h_{k+1}^j}{j!} a^{(j)}(\uh,t)
	\end{equation}
	and $e\colon \N^{k+1}\times \R \to \R$ is negligible.
\end{lemma}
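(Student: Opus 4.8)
The plan is to apply Taylor's theorem to $t\mapsto a(\uh,t)$ with the increment $h_{k+1}$ and to show that the tail beyond the $d$-th order term is negligible in the technical sense defined above. Write $a(\uh,t)=\sum_{j=0}^r p_j(\uh)\,t^{d_j}$ with $0=d_0<d_1<\cdots<d_r$, so $d=[d_r]$. Since each $a^{(j)}(\uh,t)$ for $0\le j\le d$ is again a polynomial with real exponents and $k$-parameters (the monomials $t^{d_i}$ differentiate to constant multiples of $t^{d_i-j}$, with exponents that stay real, and are only dropped once $j$ exceeds $d_i$), the function $\tilde a(\uh,h_{k+1},t)$ defined by \eqref{E:approx} is visibly a polynomial with real exponents and $(k+1)$-parameters — the new variable $h_{k+1}$ enters only through the monomials $h_{k+1}^j/j!$, with polynomial coefficients $\tfrac1{j!}a^{(j)}(\uh,t)$ in the remaining variables. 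So the content of the lemma is the estimate on the error.

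First I would reduce to a single monomial: by the triangle inequality it suffices to prove the claim for each $a(\uh,t)=p(\uh)t^{c}$ with $c=d_i$ and $p\in\R[t_1,\ldots,t_k]$, since both sides of \eqref{E:tildea} are linear in $a$. For such a monomial, Taylor's theorem with Lagrange remainder gives, for $t\ge 1$ and $0\le h_{k+1}\le L_N$,
\[
(t+h_{k+1})^{c}=\sum_{j=0}^{d}\binom{c}{j}h_{k+1}^{j}\,t^{c-j}+\binom{c}{d+1}h_{k+1}^{d+1}\,\xi^{\,c-d-1}
\]
for some $\xi$ between $t$ and $t+h_{k+1}$, and after multiplying by $p(\uh)$ the first sum is exactly the corresponding block of $\tilde a$. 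Thus $e(\uh,h_{k+1},t)=p(\uh)\binom{c}{d+1}h_{k+1}^{d+1}\xi^{\,c-d-1}$, and since $c-d-1<0$ (because $c<d+1$ by definition of $d=[c]$... here $d=[d_r]\ge[c]$, so in the worst case $c-d-1$ could be as large as $-0$; one must be slightly careful and instead note $c-d-1\le d_r-d-1<0$ using $d_r<d+1$), we bound $\xi^{c-d-1}\le t^{\,c-d-1}$ when $c-d-1<0$. Hence, for $\uh\in[L_N]^k$, $h_{k+1}\in[L_N]$, $t\in[\sqrt N,N]$,
\[
|e(\uh,h_{k+1},t)|\ \ll_{a}\ L_N^{\deg p}\cdot L_N^{\,d+1}\cdot N^{(c-d-1)/2},
\]
and since $L_N=[e^{\sqrt{\log N}}]\prec N^{\varepsilon}$ for every $\varepsilon>0$ while $N^{(c-d-1)/2}$ decays like a fixed negative power of $N$, the right-hand side tends to $0$ as $N\to\infty$. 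This is precisely the assertion that $e$ is negligible.

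The one point that needs care — and the only genuine obstacle — is getting the exponent bookkeeping right so that the remainder really carries a strictly negative power of $t$: one must take the Taylor expansion to order exactly $d=[d_r]$ (not to order $[d_i]$ for the lower monomials), so that the dropped term in the $i$-th monomial has exponent $d_i-d-1$, which is $<0$ since $d_i\le d_r<d+1$. (If $d_i\le d$ is itself an integer, or if $c$ happens to be an integer with $c\le d$, the higher derivatives eventually vanish and there is simply no remainder for that monomial, so the estimate is trivial there.) With the orders chosen this way, every surviving remainder monomial has the form $(\text{polynomial in }\uh)\cdot h_{k+1}^{d+1}\cdot t^{d_i-d-1}$ with $d_i-d-1<0$, and the negligibility follows from $L_N\prec N^\varepsilon$ exactly as above. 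Summing the finitely many monomials and invoking Lemma~\ref{L:errors} elsewhere (not here) completes the proof.
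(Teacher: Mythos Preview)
Your proposal is correct and follows essentially the same approach as the paper: both apply Taylor's theorem with Lagrange remainder to order $d$ and then show the remainder is negligible using that its $t$-exponent is strictly negative while the $\uh,h_{k+1}$ factors are bounded by powers of $L_N\prec N^\varepsilon$. The only cosmetic difference is that you reduce to individual monomials $p(\uh)t^{d_i}$ by linearity, whereas the paper applies the Taylor remainder directly to $a$ and writes $e(\uh,h_{k+1},t)=\frac{h_{k+1}^{d+1}}{(d+1)!}\,a^{(d+1)}(\uh,\xi)$; your monomial-by-monomial treatment also cleanly handles the edge case where some $d_i$ is an integer $\le d$ (so the remainder for that monomial vanishes), a point the paper's phrasing ``the fractional degree of $a$ is $d+c$ for some $c\in(0,1)$'' glosses over. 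The parenthetical reference to Lemma~\ref{L:errors} at the end is irrelevant to this proof and should simply be dropped.
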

\begin{proof}
	Using the Taylor expansion of $a(\uh,t)$ we get that \eqref{E:tildea} holds with
	$$
	e(\uh,h_{k+1},t):=  \frac{h_{k+1}^{d+1}}{(d+1)!}\,  a^{(d+1)}(\uh,\xi_{\uh,h_{k+1},t})
	$$
	for some  $\xi_{\uh,h_{k+1},t}\in [t,t+h_{k+1}]$. Since the fractional degree of $a$ is $d+c$ for some $c\in (0,1)$, we have
	$$
\max_{(\uh,h_{k+1})\in [L_N]^{k+1}, t\in [\sqrt{N},N]}|		e(\uh,h_{k+1},t)|\prec \frac{L_N^A}{N^{\frac{1-c}{2}}},
	$$
	for some $A>0$ that depends on $d$ and the maximum degree of the coefficient polynomials of $a(\uh,t)$. Since $L_N\prec N^\varepsilon$ for every $\varepsilon>0$,  it follows that
	$$
	\lim_{N\to\infty}\max_{(\uh,h_{k+1})\in [L_N]^{k+1}, t\in [\sqrt{N},N]}|		e(\uh,h_{k+1},t)|=0,
	$$
	 completing the proof.
	\end{proof}
For example, if $a(h,t)=ht^a$ for some $a\in (2,3)$, then modulo negligible terms (in the sense defined above)
we have that $\tilde{a}(h,t+h_{1})$ is equal to $ht^a+ah_1ht^{a-1}+\frac{a(a-1)}{2}h_1^2ht^{a-2}$.


Next we define an operation that we later show preserves nice families of polynomials and reduces their type.
 \begin{definition}
Let  $\mathcal{A}=\{a_1,\ldots, a_\ell\}$ be a family of polynomials with real exponents and $k$-parameters and $a\in \mathcal{A}$. We define a new family of polynomials with real exponents and $(k+1)$-parameters $\text{vdC}(\mathcal{A},a)$ as follows: We start with the family
$$
\{ \tilde{a}_i(\uh, h_{k+1},t)-a(\uh,t), \, a_i(\uh,t)-a(\uh,t),\,  i=1,\ldots, \ell
\},
$$
where  for $i=1,\ldots, \ell$  the polynomial with real exponents and $(k+1)$-parameters $\tilde{a}_i$ is as in \eqref{E:approx}  (so it is equal to 	$a_i(\uh,t+h_{k+1})$ modulo negligible terms), and we  remove all functions that are constant in the variable $t$.
\end{definition}
Suppose for example that we start with the nice family
$$
\mathcal{A}=\{t^{1.5}, t^{1.5}+t^{1.1}, t^{1.5}+t^{1.2}\}.
$$
The type of this family is $(1, 3,0)$ and the family  $\text{vdC}(\mathcal{A},t^{1.5}+t^{1.2})$ is
$$
\{-t^{1.2}+1.5ht^{0.5}, -t^{1.2}+t^{1.1}+1.5ht^{0.5}, 1.5ht^{0.5}+1.2ht^{0.2}, -t^{1.2}, -t^{1.2}+t^{1.1}\},
$$
(note that the first and fourth functions can be identified and the same holds for the second and the fifth)
which is also nice and has smaller type, namely  $(1,2, 1 )$. We remark that if we had chosen to  identify  functions
that have   the same fractional degree, then  the original family would have type
$(1,1,0)$ and the family $\text{vdC}(\mathcal{A},t^{1.5}+t^{1.2})$  would have  larger type, namely $(1,2,1)$.
\begin{lemma}\label{L:Reduce}
	Let $\mathcal{A}=\{a_1,\ldots, a_\ell\}$ be a nice family of polynomials with real exponents and $k$-parameters such that $\fdeg(a_1)>1$.
	Then there exists $a\in \mathcal{A}$ such that the family $\text{vdC}(\mathcal{A},a)$, ordered so that the first function is $\tilde{a}_1-a$, is nice and has smaller type.  Furthermore, if $\mathcal{A}$ consists of fractional polynomials with $k$-parameters, then $\text{vdC}(\mathcal{A},a)$ consists of fractional polynomials with $(k+1)$-parameters.
\end{lemma}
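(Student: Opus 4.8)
\emph{Proof strategy.} Write $D:=\fdeg(a_1)$ and $d:=\deg(a_1)$; since $\fdeg(a_1)>1$ we have $d\geq 1$, and by niceness every $a_i$ has fractional degree $\leq D$, so $d$ is the maximal degree occurring in $\mathcal{A}$. I will use three routine facts. (a) The relation $\cong$ is an equivalence relation on the polynomials with real exponents that are non-constant in $t$: reflexivity holds because such a function has degree $\geq 0>\deg(0)$, symmetry is clear, and transitivity follows from the ultrametric-type bound $\deg(a-c)\leq\max(\deg(a-b),\deg(b-c))$. (b) For a polynomial $b$ with real exponents of degree $e$, the Taylor polynomial $\tilde b$ of \eqref{E:approx} (with the negative-exponent monomials, which are negligible, discarded) satisfies $\deg\tilde b=e$, has the same leading monomial as $b$, and $\tilde b\cong b$, since $\tilde b-b=\sum_{j\geq 1}\tfrac{h_{k+1}^{j}}{j!}b^{(j)}$ has degree $\leq e-1$ and, crucially, has zero constant term when expanded as a polynomial in the new variable $h_{k+1}$. (c) The $\cong$-class of a degree-$j$ function is determined by its ``top part'' $\mathrm{top}_j(b)$, the sum of the monomials of $b$ of exponent $\geq j$, and $\mathrm{top}_j$ is additive.

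\emph{Choice of pivot.} Call $a_i$ \emph{admissible} if $\fdeg(\tilde a_1-a_i)\geq\fdeg(b)$ for every $b\in\text{vdC}(\mathcal{A},a_i)$, that is, if condition (i) of niceness holds for $\text{vdC}(\mathcal{A},a_i)$ when reordered so that $\tilde a_1-a_i$ comes first. Since $\tilde a_1-a_i$ has $h_{k+1}$-free part $a_1-a_i$ and $h_{k+1}$-divisible part $\tilde a_1-a_1$ of fractional degree $D-1$ (so these cannot cancel), a short computation with fact (b) shows that admissible pivots always exist and that every $a_i$ with $\fdeg(a_i)<D$ is admissible (then the leading monomial of $a_1$ survives in $a_1-a_i$, so $\fdeg(a_1-a_i)=D$, which is the largest fractional degree that can occur in $\text{vdC}$). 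I will take $a:=a_m$ to be an admissible pivot of \emph{minimal degree} $e:=\deg(a_m)$. The key point is that then $\mathcal{A}$ has no function of degree $<e$: if $\deg(a_j)<e$ then $\fdeg(a_j)<e\leq d\leq D$, so $a_j$ would be an admissible pivot of strictly smaller degree; hence $e$ is the minimal degree in $\mathcal{A}$.

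\emph{Niceness and the final clause.} For $\text{vdC}(\mathcal{A},a_m)$ ordered with $\tilde a_1-a_m$ first: this function is non-constant since $\fdeg(\tilde a_1-a_m)\geq D-1>0$; condition (i) is admissibility; and for condition (ii), any difference $(\tilde a_1-a_m)-b$ with $b\in\text{vdC}(\mathcal{A},a_m)$ other than the first function equals $\tilde a_1-a_i$ or $\tilde a_1-\tilde a_i$ for some $i\neq 1$, which restricts at $h_{k+1}=0$ to the non-constant function $a_1-a_i$, the one remaining case $b=a_1-a_m$ giving $\tilde a_1-a_1$, of fractional degree $D-1>0$. The final clause follows from fact (b): differentiation sends $t^{e}$ with $e\notin\Z$, $e>0$, to a non-zero multiple of $t^{e-1}$ with $e-1\notin\Z$, monomials acquiring a non-positive exponent are negligible and discarded, and differences of fractional polynomials are fractional; so every member of $\text{vdC}(\mathcal{A},a_m)$ is a fractional polynomial in $(k+1)$ parameters.

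\emph{Type reduction, and the main obstacle.} Let $w_j$ and $w'_j$ be the numbers of $\cong$-classes of degree $j$ in $\mathcal{A}$ and in $\text{vdC}(\mathcal{A},a_m)$. Using that all $a_i$ have degree $\geq e$, additivity of $\mathrm{top}_j$, and $\tilde a_i\cong a_i$ (so $\tilde a_i-a_m$ shares the class of $a_i-a_m$ whenever the latter has degree $\geq\deg a_i$), one shows: for $j>e$, a degree-$j$ member of $\text{vdC}(\mathcal{A},a_m)$ can only come from an $a_i$ with $\deg a_i=j$, with $\mathrm{top}_j(a_i-a_m)=\mathrm{top}_j(a_i)$ since $\deg a_m=e<j$, while $\tilde a_1-a_m$, if of degree $j$, lies in the class of $a_1$; hence $w'_j=w_j$. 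For $j=e$, writing the degree-$e$ classes of $\mathcal{A}$ as $c_1=[a_m],c_2,\dots,c_{w_e}$, the degree-$e$ members of $\text{vdC}(\mathcal{A},a_m)$ lie exactly in the $w_e-1$ distinct non-zero classes $c_2-c_1,\dots,c_{w_e}-c_1$, and $\tilde a_1-a_m$ contributes nothing beyond these (it has degree $>e$, or $<e$, or — when $e=d$ — it lies in some $c_j-c_1$); hence $w'_e=w_e-1$. Thus the type vectors agree in the coordinates for degrees $>e$ and $\text{vdC}(\mathcal{A},a_m)$ is strictly smaller at degree $e$ (and if $w_e=1$ and $e=d$, the first coordinate of the type itself drops), so $\text{type}(\text{vdC}(\mathcal{A},a_m))<\text{type}(\mathcal{A})$. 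The hard part is exactly this bookkeeping: arranging the pivot so that the niceness constraint (which forces $\fdeg(\tilde a_1-a)$ to be maximal) is compatible with minimality of $\deg(a)$, and checking — according to the position of $\fdeg(a_1-a_m)$ relative to $d$ and $D$, and to whether $e<d$ or $e=d$ — that the ``new'' classes created by the $h_{k+1}$-dependent parts of the $\tilde a_i-a_m$ only ever appear at degrees $<e$, where they are irrelevant to the lexicographic comparison. Handling the negligible terms (negative-exponent monomials from differentiation, Taylor remainders) via Lemma~\ref{L:errors} is a further, purely technical, point.
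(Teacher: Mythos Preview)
Your strategy is correct and runs parallel to the paper's, but you organize the pivot choice differently. The paper splits into two cases: if not all $a_i$ share the maximal fractional degree $D$, it takes $a$ to be an element of $\{a_2,\ldots,a_\ell\}$ of minimal fractional degree; if they all do, it takes $a=a_{i_0}$ maximizing $\deg(\tilde a_1-a_{i_0})$. You instead define an \emph{admissible} pivot (one making $\tilde a_1-a$ of maximal fractional degree in the vdC family) and pick one of minimal integral degree $e$, then observe that $e$ must be the minimal degree in $\mathcal A$. This unification makes the type-reduction bookkeeping cleaner: your top-part argument gives $w'_j=w_j$ for $j>e$ and $w'_e=w_e-1$ directly, whereas the paper records the new type vector separately in each case. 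Conversely, the paper's case split makes the \emph{existence} of a suitable pivot immediate, while your ``a short computation shows admissible pivots always exist'' needs a line when all $\fdeg(a_i)=D$: choose $m$ maximizing $\fdeg(\tilde a_1-a_m)$ and use $\fdeg(a_i-a_m)\le\max\bigl(\fdeg(\tilde a_1-a_i),\fdeg(\tilde a_1-a_m)\bigr)$ together with $\fdeg(\tilde a_i-a_i)=\fdeg(a_i)-1\le D-1=\fdeg(\tilde a_1-a_1)\le\fdeg(\tilde a_1-a_m)$. One minor point: the negative-exponent monomials arising in $a^{(j)}$ are absorbed into the negligible error of Lemma~\ref{L:shifts}, not handled via Lemma~\ref{L:errors}.
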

\begin{proof}
	We first remark that if $\mathcal{A}$ consists of fractional polynomials with $k$-parameters and $a$ is any fractional polynomial with $k$-parameters,  then \eqref{E:approx} implies that $\text{vdC}(\mathcal{A},a)$ consists of fractional polynomials with $(k+1)$-parameters.
	
	For $i=1,\ldots, \ell$, let $\tilde{a}_i$ be the polynomial with real exponents and $(k+1)$-parameters given by \eqref{E:approx}.
	We choose $a\in \mathcal{A}$ as follows:
	\begin{enumerate}
	 \item If  $a_1,\ldots, a_\ell$ do not have the same fractional degree, we let   $a_{i_0}$ be a function in the family $\{a_2,\ldots, a_\ell\}$ that has minimal (positive) fractional degree, and set $a=a_{i_0}$.

	\item If  $a_1,\ldots, a_\ell$  have the same fractional degree,  we let
	$i_0\in \{1,\ldots, \ell\}$ be so that  $\tilde{a}_1-a_{i_0}$ has maximal degree within the  family $\tilde{a}_1-a_1, \ldots, \tilde{a}_1-a_\ell$ and set  $a=a_{i_0}$.
	\end{enumerate}



 {\bf Claim 1.} {\em  The family $\text{vdC}(\mathcal{A},a)$ is nice.}

By construction, all functions in $\text{vdC}(\mathcal{A},a)$ are non-constant (we have removed constant functions).
We first show that independently of the choice of $a$,
the difference of $\tilde{a}_1-a$ with a function in $\text{vdC}(\mathcal{A},a)$
is always non-constant (in the variable $t$); in the process we also show that $\fdeg(\tilde{a}_1-a)>0$. Suppose that  such a difference has the form $\tilde{a}_1-a_i$ for some  $i\in \{1, \ldots, \ell\}$.
 It follows from Lemma~\ref{L:shifts}
 that $\tilde{a}_1$ contains the term $h_{k+1}a_1'(t)$, which depends non-trivially on  the parameter
 $h_{k+1}$ (note also that  $a_1,\ldots, a_\ell$ do not depend on this parameter). It follows from this
   and our assumption  $\fdeg(a_1)>1$ that
$$
\fdeg(\tilde{a_1}-a_i)\geq \fdeg(a_1')=\fdeg(a_1)-1>0, \quad i=1,\ldots, \ell.
$$
It remains to cover the case where  the difference of $\tilde{a}_1-a$ with a function in $\text{vdC}(\mathcal{A},a)$ has the form
$\tilde{a}_1-\tilde{a}_i$ for some  $i\in \{2, \ldots, \ell\}$. Then using  Lemma~\ref{L:shifts}  and our assumption that $\mathcal{A}$ is nice, we get
$$
\fdeg(\tilde{a}_1-\tilde{a}_i)\geq \fdeg(a_1-a_i)>0, \quad i=2,\ldots, \ell.
$$

Next we show that $\tilde{a}_1-a$ has maximal fractional degree within the family $\text{vdC}(\mathcal{A},a)$.
Suppose first that we are in Case $(i)$. Since $\fdeg(a_{i_0})<\fdeg(a_1)$,
 we have that  $\tilde{a}_1-a_{i_0}$ has the same fractional degree as
$a_1$, which by assumption has maximal fractional degree within the family $\{a_1,\ldots, a_\ell\}$. We deduce that $\tilde{a}_1-a_{i_0}$
 has maximal fractional degree within the family  $\text{vdC}(\mathcal{A},a)$.
 Suppose now that we are in Case $(ii)$ and
let $i\in \{1,\ldots, \ell\}$.  Since  $a_i-a_{i_0}=(a_i-\tilde{a}_1)+(\tilde{a}_1-a_{i_0})$ and by the choice of $i_0$ we have $\fdeg(\tilde{a}_1-a_{i_0})\geq \fdeg(\tilde{a}_1-a_i)$, we deduce
that
\begin{equation}\label{E:i-1}
\fdeg(\tilde{a}_1-a_{i_0})\geq \fdeg(a_i-a_{i_0}), \quad i=1, \ldots, \ell.
\end{equation}
 Moreover, note that   $\tilde{a}_i-a_{i_0}=(\tilde{a}_i-a_i)+(a_i-a_{i_0})$
and
\begin{equation}\label{E:i0}
\fdeg(\tilde{a}_1-a_{i_0})\geq \fdeg(\tilde{a}_1-a_1)=\fdeg(a_1)-1\geq\fdeg(a_i)-1=\fdeg(\tilde{a}_i-a_i),
\end{equation}
where the two identities follow from  Lemma~\ref{L:shifts}, and the first estimate follows from the choice of $i_0$ and the second since the family
$\mathcal{A}$ is nice. We deduce from \eqref{E:i-1}  and \eqref{E:i0} that
\begin{equation}\label{E:i0'}
\fdeg(\tilde{a}_1-a_{i_0})\geq \fdeg(\tilde{a}_i-a_{i_0}), \quad i=1,\ldots, \ell.
\end{equation}
Combining \eqref{E:i-1} and \eqref{E:i0'} we get that
$\tilde{a}_1-a_{i_0}$ has maximal fractional degree  within the family  $\text{vdC}(\mathcal{A},a)$.

\smallskip

{\bf Claim 2.} {\em  The family $\text{vdC}(\mathcal{A},a)$ has smaller type.}

Using Lemma~\ref{L:shifts} and the definition of the degree, it is easy to verify that if  for some $i\in \{1,\ldots, \ell\}$ we have $a_i\not\cong a_{i_0}$, then $\deg(a_i-a_{i_0})=\deg(\tilde{a}_i-a_{i_0})=\deg(a_i)$ and  $a_i-a_{i_0}\cong \tilde{a}_i-a_{i_0}$,
while if  $a_i\cong a_{i_0}$, then $\deg(a_i-a_{i_0})<\deg(a_i)$ and  $\deg(\tilde{a}_i-a_{i_0})<\deg(a_i)$.
Using these facts we easily get the following:

If we are  in Case $(i)$, we have that the
 type of $\mathcal{A}$ has the form
$(d, k_d, \ldots, k_l,0,\ldots, 0)$, where $l=\deg(a_{i_0})$,  $k_l\geq 1$, and $d\geq 1$.
 Then
  the type of
$\text{vdC}(\mathcal{A},a)$ is
$(d, k_d, \ldots, k_l-1)$  if $l=0$, and
$(d, k_d, \ldots, k_l-1,k_{l-1},\ldots, k_0)$
for some $k_0,\ldots, k_{l-1}\in \Z_+$  if $l\geq 1$.

If  we are in Case $(ii)$, we have that
 the type of $\mathcal{A}$ has the form
$(d, k_d,0, \ldots, 0)$, where  $d\geq 1$ and $k_d\geq 1$. Then for every $a\in \mathcal{A}$ the type of
$\text{vdC}(\mathcal{A},a)$ is $(d, k_d-1,k_{d-1}\ldots, k_0)$
for some $k_0,\ldots, k_{d-1}\in \Z_+$.

In both cases the type of
the family  $\text{vdC}(\mathcal{A},a)$
is smaller than the type of the family $\mathcal{A}$,
completing the proof of  Claim 2.
\end{proof}
\subsection{Proof of Theorem~\ref{T:CF'}} We will  now  use a  PET-induction technique   to prove Theorem~\ref{T:CF'}. The base case of the induction was covered in the previous section and the inductive step will be proved  using \eqref{E:VDC1}  and  Lemma~\ref{L:Reduce}.
\begin{proof}[Proof of Theorem~\ref{T:CF'}]
Our goal is to show that there exists $s\in \N$ such that if
 $f_{N,\uh,1}=f_1$, $\uh\in [L_N]^k,N\in\N$, and
$\nnorm{f_1}_s=0$, and all other functions below are assumed to be $1$-bounded, then
	$$
		\lim_{N\to\infty}	\E_{\underline{h}\in [L_N]^k}	
	\norm{\E_{n\in [N]}\,  w_{N,\underline{h}}(n)\cdot \prod_{i=1}^\ell T^{[a_i(\underline{h},n)]}f_{N,\uh,i}}_{L^2(\mu)}=0,
$$
	where  $w_{N,\uh}(n):=(\Delta_\uh\Lambda')(n)\cdot c_{N,\uh}(n)$, $ \uh\in [L_N]^k, n\in [N],  N\in\N,$ and  the sequence $(c_{N,\uh}(n))$ is $1$-bounded.
	
We prove this using induction on the type of the nice family of fractional polynomials  $\mathcal{A}:=\{a_1,\ldots, a_\ell\}$ with finitely many parameters. If $\fdeg(a_1)<1$ (then also $\fdeg(a_j)<1$ for $j=2,\ldots, \ell$), then the  result follows from Proposition~\ref{P:base}.

Suppose that the family
	 $\mathcal{A}:=\{a_1,\ldots, a_\ell\}$ has type $(d,k_d,\ldots, k_0)$,
	 where $d\geq 1$, $k_d\geq 1$,  $k_{d-1},\ldots, k_0\in \Z_+$, and the statement holds for all families
	 of fractional polynomials with finitely many parameters and type strictly smaller than
	 $(d,k_d,\ldots, k_0)$.  Since $\deg(a_1)\geq 1$ and $a_1$ is a fractional polynomial we have that $\fdeg(a_1)>1$.

	By Lemma~\ref{L:Reduce}, there exists $a\in \mathcal{A}$ such that the family  $\text{vdC}(\mathcal{A},a)$, ordered so that the first function is $\tilde{a}_1-a$ (where $\tilde{a}_1$ is as in \eqref{E:approx}), consists of fractional polynomials with finitely many parameters and satisfies the following
	\begin{equation}\label{E:choicea}
	\text{vdC}(\mathcal{A},a) \quad \text{is nice and has type strictly smaller than} \quad (d,k_d,\ldots, k_0).
	\end{equation}
	
We use  \eqref{E:VDC1} for the average $\E_{n\in [N]}$, compose with $T^{-[a(\uh,n)]}$
and then use  the Cauchy-Schwarz
inequality. We get that it suffices to show the following (recall that
$(\Delta_h u)(n)=u(n+h)\cdot \overline{u(n)}$)
	\begin{multline*}
\lim_{N\to\infty}	\E_{(\underline{h},h_{k+1})\in [L_N]^{k+1}}	
\Big|\Big|\E_{n\in [N]}\,  (\Delta_{h_{k+1}}w_{N,\underline{h}})(n)\cdot  \prod_{i=1}^\ell T^{[a_i(\underline{h},n+h_{k+1})]-[a(\uh,n)]}f_{N,\uh,i}\cdot \\ \prod_{i=1}^\ell T^{[a_i(\underline{h},n)]-[a(\uh,n)]}\overline{f}_{N,\uh,i}\Big|\Big|_{L^2(\mu)}=0.
\end{multline*}

We replace the differences of integer parts on the iterates  with the integer part of their differences and also replace $a_i(\uh,n+h_{k+1})$ with $\tilde{a}_i(\uh, h_{k+1}, n)$,
where $\tilde{a}_j$ is associated to $a_j$ by \eqref{E:approx} of Lemma~\ref{L:shifts}.
To make these substitutions we introduce some  error sequences that take finitely many values; as usual, these sequences can be   handled after we apply  Lemma~\ref{L:errors} (which applies without a problem since  the values of $n$ that are smaller than $\sqrt{N}$ contribute negligibly in the average).
After completing these maneuvers we  see that it suffices to show the following
\begin{equation*}
\lim_{N\to\infty}		\E_{(\uh,h_{k+1})\in [L_N]^{k+1}} \norm{\E_{n\in [N]}\,  w_{N,\uh,h_{k+1}}(n) \cdot \prod_{i=1}^{2\ell} T^{[b_i(\uh,h_{k+1},n)]+\epsilon_{i,N}}g_{N,\uh,h_{k+1},i}}_{L^2(\mu)}=0,
\end{equation*}
where $\epsilon_{1,N},\ldots, \epsilon_{2\ell,N}$ take finitely many values for $N\in\N$,
$$
 w_N(\uh, h_{k+1},n):=
 (\Delta_{(\uh,h_{k+1})}\Lambda')(n)\cdot c_{N,\uh, h_{k+1}}(n),
$$
for some $1$-bounded sequence  $(c_{N,\uh, h_{k+1}}(n))$, and
\begin{align*}
	b_i(\uh, h_{k+1},t)&:=\tilde{a}_i(\uh, t+h_{k+1})-a(\uh,t),\quad    i=1,\ldots, \ell,\\
	     b_{\ell+i}(\uh, h_{k+1},t)&:=a_i(\uh, t)-a(\uh,t), \quad   i=1,\ldots, \ell,
\end{align*}
and
$g_{N,\uh,h_{k+1},i}$ are $1$-bounded functions in $L^\infty(\mu)$ such that $g_{N,\uh,h_{k+1},1}:=f_1$ for all $(\uh,h_{k+1})\in [L_N]^{k+1}$, $N\in\N$. We compose with $T^{-\epsilon_{1,N}}$ inside the $L^2(\mu)$-norm and set $h_{N,\uh,h_{k+1},i}:=T^{\epsilon_{i,N}-\epsilon_{1,N}}g_{N,\uh,h_{k+1},i}$, $i=1,\ldots, 2\ell$ (then $h_{N,\uh,h_{k+1},1}=f_1$). We get that it suffices to show that
 \begin{equation}\label{E:indstep}
\lim_{N\to\infty}		\E_{(\uh,h_{k+1})\in [L_N]^{k+1}} \norm{\E_{n\in [N]}\,  w_{N,\uh,h_{k+1}}(n) \cdot \prod_{i=1}^{2\ell} T^{[b_i(\uh,h_{k+1},n)]}h_{N,\uh,h_{k+1},i}}_{L^2(\mu)}=0.
\end{equation}

Finally, we can remove all functions associated with iterates that do not depend on the variable $n$ (note that by Lemma~\ref{L:Reduce} the function $b_1$ is not one of them), and thus we arrive at an average with iterates given by the family
$\text{vdC}(\mathcal{A},a)$, ordered so that the first function is $\tilde{a}_1-a$. By the choice of $a$ we have that \eqref{E:choicea} holds. Hence,  the induction hypothesis applies for this family and gives that there exists $s\in \N$ such that if $\nnorm{f_1}_s=0$, then   \eqref{E:indstep} holds. This completes the induction step and the proof.
\end{proof}


\end{document}